\pgfplotsset{compat = newest}
\theoremstyle{plain}
\newtheorem{proposition}{Proposition}[section]
\newtheorem{theorem}[proposition]{Theorem}
\newtheorem{lemma}[proposition]{Lemma}
\newtheorem{corollary}[proposition]{Corollary}
\theoremstyle{definition}
\newtheorem{definition}[proposition]{Definition}
\newtheorem{remark}[proposition]{Remark}
\newtheorem{notation}[proposition]{Notation}
\def\bb #1{\mathbb{#1}}
\def\bf #1{\mathbf{#1}}
\def\wt #1{\widetilde{#1}}
\newcommand{\arctanh}{\mathrm{arctanh}}
\newcommand{\dil}{\mathrm{Li}_2}
\newcommand{\PGL}{\mathsf{PGL}}
\newcommand{\R}{\mathbb R}
\newcommand{\vol}{\textnormal{vol}}
\newcommand{\HT}{\textnormal{HT}}
\newcommand{\st}{0}
\newcommand{\out}{\textnormal{out}}
\newcommand{\inn}{\textnormal{in}}
\newcounter{notes}
\numberwithin{equation}{section}
\title[Holmes--Thompson area of inscribed polygons]{Holmes--Thompson area of inscribed polygons \\ and convex projective structures 
}
\author{Xenia Flamm}
\address{Institut des Hautes \'Etudes Scientifiques\newline
\indent Max-Planck Institute for Mathematics in the Sciences}
\email{flamm@ihes.fr, xenia.flamm@mis.mpg.de}
\author{Giuseppe Martone}
\address{Sam Houston State University}
\email{gxm120@shsu.edu}
\begin{document}

\date{\today}

\begin{abstract}
Positive tuples of complete flags in $\mathbb{R}^3$ define two convex polygons in $\mathbb{RP}^2$, one inscribed in the other.
We are interested in relating the Holmes--Thompson area of the inner polygon for the Hilbert metric on the outer polygon to the double and triple ratios of the positive tuple of flags.
This article focuses on positive triples and quadruples of flags.
For quadruples,  we investigate the special cases of hyperbolic quadrilaterals and the parametrization of the finite area convex real projective structures on a thrice-punctured sphere.
\end{abstract}

\maketitle

\section{Introduction}

A (marked) convex projective structure on a surface $S$ is an identification of the universal cover of $S$ with an open \emph{properly convex} subset $\Omega \subset \mathbb{RP}^2$,  meaning that $\Omega$ is contained in some affine chart of $\mathbb{RP}^2$ in which $\Omega$ is convex and bounded.
Such subsets of projective space are naturally endowed with a metric: the Hilbert metric.
This metric descends to a metric on $S$, turning the surface into a \emph{Finsler manifold}.
Loosely speaking, a Finsler manifold is a manifold endowed with a continuous family of norms on each tangent space. The norm allows to define a metric on a Finsler manifold, as well as several (non-equivalent) notions of volume \cite{AlvarezThompson_VolumesNormedFinslerSpaces}.
In this article we are interested in the \emph{Holmes--Thompson volume}: Given a Finsler manifold $M$ and $X\subset M$, Holmes and Thompson \cite{HolmesThompson_NDimAreaContentMinkowskiSpaces} define a notion of volume of $X$ by using the natural symplectic structure on the cotangent bundle of $M$ to integrate unit balls, with respect to the dual Finsler norm, at the points of $X$. The goal of this paper is the study of the Holmes--Thompson area of (finite area) convex projective surfaces.
Other notions of area of convex projective surfaces are already studied in \cite{AdeboyeCooper_AreaConvexProjSurfacesFGCoord} for the $p$-area, and in \cite{ColboisVernicosVerovic,marquis2012surface, Wolpert_HilbertAreaInscribedTrianglesQuadrilaterals}  for the Busemann area, which coincides with the Hausdorff measure. Note that these three notions are bilpschitz equivalent by \cite[Proposition 9.4]{Marquis_around}. We believe that the Holmes--Thompson area, which satisfies a Crofton formula by \cite{AlvarezBerck_WhatIsWrongHausdorffMeasureFinslerSpaces}, should appear when computing self-intersections of Hilbert geodesic currents (see \cite{Kim_Papadopoulos, Labourie_cross, MZ}) of convex real projective structures.

In their seminal work, Fock--Goncharov \cite{FockGoncharov_ModuliSpacesLocalSystemsHigherTeichmuellerTheory}, see also \cite{BonahonDreyer_ParametrizingHitchinComponents, BonahonDreyer_characters, FockGoncharov_ModuliSpacesConvexProjectiveStructuresSurfaces}, parametrize the space of (marked) finite area convex projective structures on $S$, denoted $\mathcal{CP}(S)$, in terms of projective invariants of positive tuples of flags in $\mathbb R^3$.
A natural question is thus to express the area of a convex projective surface in terms of these parameters. This seems to be a very difficult task in general.

In the case that $S=S_{0,3}$ is a thrice-punctured sphere,  
we obtain an asymptotic growth result of the Holmes--Thompson area of a convex projective structure on $S_{0,3}$.
The space $\mathcal{CP}(S_{0,3})$ can be parametrized by two real numbers ${\bf d}$ and ${\bf t}$ (the logarithms of a {\em double} and a {\em triple} ratio, respectively), and we estimate the growth rate of the Holmes--Thompson area of the convex projective structure $\Psi({{\bf d},{\bf t}})$ corresponding to ${\bf d}$ and ${\bf t}$ in terms of ${\bf d}$ and ${\bf t}$, as the Euclidean norm $\|(\bf d,\bf t)\|$ goes to infinity.

\begin{theorem}
\label{thm: intro: S03}
Let $\vol^{\HT}(\Psi({{\bf d},{\bf t}}))$ denote the Holmes--Thompson area of the convex projective structure $\Psi({{\bf d},{\bf t}})$ on the thrice-punctured sphere. Then,
\[
\liminf_{\|(\bf d,\bf t)\|\to\infty}\frac{\vol^{\HT}(\Psi({{\bf d},{\bf t}}))}{\bf d^2+\bf t^2}>0.
\]
\end{theorem}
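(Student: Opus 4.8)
The plan is to reduce the area of the surface to the Holmes--Thompson area of an explicit inscribed fundamental domain, and then to read off the quadratic lower bound from the estimates for quadruples of flags developed in the body of the paper. First I would fix an ideal triangulation of $S_{0,3}$ by two ideal triangles glued along three edges and lift it to the universal cover $\Omega\subset\mathbb{RP}^2$ of $\Psi(\mathbf{d},\mathbf{t})$. In the Fock--Goncharov coordinates of \cite{FockGoncharov_ModuliSpacesConvexProjectiveStructuresSurfaces}, the holonomy acts on a positive quadruple of flags whose double and triple ratios are $e^{\mathbf{d}}$ and $e^{\mathbf{t}}$; the inner polygon they span is an inscribed quadrilateral $Q(\mathbf{d},\mathbf{t})$ (two adjacent triangles sharing the diagonal edge) that is a fundamental domain for the action. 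Because the holonomy acts by projective transformations, which are isometries of the Hilbert metric and preserve the Holmes--Thompson measure,
\[
\vol^{\HT}(\Psi(\mathbf{d},\mathbf{t}))=\vol^{\HT}_{\Omega}\bigl(Q(\mathbf{d},\mathbf{t})\bigr),
\]
so the left-hand side is exactly the quantity analysed for quadruples of flags.

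For a lower bound I would not evaluate this area exactly, but exploit monotonicity of the Holmes--Thompson measure under inclusion, estimating the area of well-chosen sub-regions. The governing heuristic is that as $\|(\mathbf{d},\mathbf{t})\|\to\infty$ the Hilbert geometry of $\Omega$ degenerates to a flat normed geometry in which areas scale quadratically with linear size, and the relevant linear dimensions of $Q(\mathbf{d},\mathbf{t})$ grow like $\mathbf{d}$ and like $\mathbf{t}$. The triple ratio $e^{\mathbf{t}}$ controls the projective shape of each of the two triangles, which degenerates toward a vertex-triangle of $\Omega$ as $\mathbf{t}\to\infty$, while the double ratio $e^{\mathbf{d}}$ measures the shear across the diagonal and produces an elongated region; I would aim to bound the corresponding areas below by $c_1\mathbf{t}^2$ and $c_2\mathbf{d}^2$ respectively.

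To make these estimates rigorous I would use two interchangeable tools. The Crofton formula of \cite{AlvarezBerck_WhatIsWrongHausdorffMeasureFinslerSpaces} expresses $\vol^{\HT}(X)$ as a symplectic integral over the Hilbert geodesics meeting $X$; restricting to a two-parameter family of geodesics that each cross the sub-region yields a lower bound proportional to the symplectic measure of that family, which is computable from the cross-ratio coordinates and grows quadratically in the logarithmic parameters. Alternatively, the bilipschitz equivalence of the Holmes--Thompson, Busemann, and $p$-areas (\cite[Proposition 9.4]{Marquis_around}) reduces the problem to the Busemann (Hausdorff) area, for which Wolpert's computations for inscribed triangles and quadrilaterals \cite{Wolpert_HilbertAreaInscribedTrianglesQuadrilaterals} supply the $(\log)^2$ growth; the uniform bilipschitz constant then transfers a positive liminf back to the Holmes--Thompson area.

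The main obstacle is uniformity over directions: the quantity is a liminf as $\|(\mathbf{d},\mathbf{t})\|\to\infty$, so I must produce a single constant $c>0$ valid along every sequence, not merely along the coordinate axes. Near diagonal directions, where $\mathbf{d}$ and $\mathbf{t}$ grow comparably, the shearing and shape degenerations interact: the sub-regions witnessing the $\mathbf{d}^2$ and $\mathbf{t}^2$ contributions can overlap, the geodesic family used in the Crofton estimate may thin out, and the limiting normed structure itself varies with the direction of approach. Controlling this interaction --- ruling out cancellation and showing that the flat-limit area of $Q(\mathbf{d},\mathbf{t})$ is bounded below by a fixed positive-definite quadratic form in $(\mathbf{d},\mathbf{t})$ uniformly in the direction --- is the delicate heart of the argument, and is where I expect the explicit convexity of the Holmes--Thompson area integral in the parameters to be essential.
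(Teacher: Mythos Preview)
Your reduction to a fundamental-domain quadrilateral and the identity $\vol^{\HT}(\Psi(\mathbf d,\mathbf t))=\vol^{\HT}_{\Omega}(Q(\mathbf d,\mathbf t))$ match the paper. The gap is in the next step. You try to estimate $\vol^{\HT}_{\Omega}(Q)$ directly, with the ambient domain $\Omega=\Omega_\rho$ still present; but $\Omega$ varies with $(\mathbf d,\mathbf t)$ and is not a polygon, so neither of your proposed tools bites. Wolpert's estimates in \cite{Wolpert_HilbertAreaInscribedTrianglesQuadrilaterals} are for shapes inscribed in a \emph{polygonal} Hilbert geometry, not in a general strictly convex $\Omega$, so the bilipschitz transfer from Busemann to Holmes--Thompson (whose constants are indeed uniform) does not by itself deliver a lower bound for $\vol_\Omega(Q)$. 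And the Crofton sketch remains a heuristic: you would need to produce, uniformly in $(\mathbf d,\mathbf t)$, a two-parameter family of $\Omega$-geodesics meeting $Q$ whose symplectic measure is $\gtrsim \mathbf d^2+\mathbf t^2$, and you do not say how.

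The paper's move that you are missing is the domain-monotonicity comparison (Remark~\ref{rem: volume decreasing}): since $\Omega\subset P_{\out}(\mathbf F)$, one has
\[
\vol^{\HT}_{\Omega}(Q)\ \geq\ \vol^{\HT}_{P_{\out}(\mathbf F)}(P_{\inn}(\mathbf F))\ =\ \vol(\mathbf F),
\]
which replaces the unknown ambient domain $\Omega$ by the explicit outer quadrilateral. For the thrice-punctured sphere the pair $(P_{\inn},P_{\out})$ has Fock--Goncharov parameters satisfying $tt'=dd'=1$, and Theorem~\ref{thm: intro estimate} then gives a genuine limit $\vol(\mathbf F(d,t))/(\ln^2 d+\ln^2 t)\to C>0$, which in particular handles the ``uniformity over directions'' that you flag as the delicate point. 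In short: the monotonicity you invoke is the trivial one ($X\subset Y\Rightarrow\vol(X)\leq\vol(Y)$); the one that makes the argument work is monotonicity in the \emph{ambient convex domain}, and once you insert it the problem reduces to the quadrilateral computation already carried out in Section~\ref{sec:HTQuadrupleEx2}.
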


The result follows from a standard comparison result,  saying that if $\Omega \subset \Omega'$ then $\vol^\HT_{\Omega} \geq \vol^\HT_{\Omega'}$ (Remark~\ref{rem: volume decreasing}),  and the precise study of the area of ideal quadrilaterals inscribed in quadrilaterals in $\mathbb{RP}^2$ in Theorem~\ref{thm: intro estimate}.

To see this, we first need to explain in more detail the parametrization of the space of finite area convex projective structures, due to Fock--Goncharov \cite{FockGoncharov_ModuliSpacesConvexProjectiveStructuresSurfaces}.
For simplicity we restrict our attention here to the case $S=S_{0,3}$, but analogous statements hold more generally. Let $p_1,p_2,p_3$ denote the three punctures of $S_{0,3}$ and let $\mathcal T_{\textnormal{bal}}$ denote the balanced ideal triangulation of $S_{0,3}$ consisting of edges oriented from $p_i$ to $p_{i+1}$, with $i\in\mathbb Z/3\mathbb Z$. 
Now let $\rho$ be a finite area convex projective structure on $S_{0,3}$. The universal cover of $S_{0,3}$ is then identified with an open proper convex set $\Omega_\rho$ in the projective plane $\mathbb{RP}^2$ satisfying the property that every point in the boundary of $\Omega_\rho$ has a unique tangent line. Lift the balanced triangulation of $S_{0,3}$ to an (oriented) ideal triangulation of $\Omega_\rho$ and pick two adjacent ideal triangles with ideal vertices $\xi_1,\xi_2,\xi_3,\xi_4$ appearing in this cyclic order along the boundary of $\Omega_{\rho}$. The four points $\xi_1,\xi_2,\xi_3,\xi_4$ in $\partial\Omega_\rho$ form a convex quadrilateral (the convex hull of these four points in the affine chart containing the closure of $\Omega_\rho$),  that is contained in the convex quadrilateral singled out by the four projective lines tangent to $\partial\Omega_\rho$ at $\xi_1,\xi_2,\xi_3,\xi_4$. We are thus naturally lead to consider pairs $\mathbf P=(P_\inn,P_\out)$ of inscribed convex quadrilaterals, and to estimate the Holmes--Thompson area of $P_\inn$ seen as a subset of $P_\out$. We also mention here that Faifman, Vernicos and Walsh \cite{FaifmanVernicosWalsh, VernicosWalsh} studied the asymptotic volume of the Hilbert geometry of a convex polytope,  and the relation to its combinatorial structure.

\subsection{Inscribed quadrilaterals}
To prove Theorem \ref{thm: intro: S03} we need a better understanding of the Holmes--Thompson area of inscribed quadrilaterals.
Since the Holmes--Thompson area is a projective invariant (Remark~\ref{rem: volume invariant}),  up to the action of $\mathsf{PGL}_3(\mathbb R)$,  we can normalize $P_\out$ to be equal to the {\em standard quadrilateral} $Q_\st$ with vertices $(\pm 1,\pm 1)$ and $(\pm 1,\mp 1)$. 
A key technical step, completed in Section~\ref{sec:HTQuadruple}, is finding a formula to compute the Holmes--Thompson area of subsets of $Q_\st$.

\begin{proposition}
\label{prop:intro:integrand:quadrilateral}
The dual unit ball for the Hilbert metric at a point $(x,y)$ in the interior of $Q_\st$ has Euclidean area $A_{Q_\st}$ equal to
\[
A_{Q_\st}(x,y) = \frac{2+\max\{|x|,|y|\}}{2(1-x^2)(1-y^2)}.
\]
In particular, the Holmes--Thompson area for any Lebesgue measurable subset $U \subset Q_\st$ is equal to 
\[ 
\vol^\HT_{Q_\st}(U) = \frac{1}{\pi}\int_U A_{Q_\st}(u)d\mu(u),
\]
where $d\mu$ is the standard Lebesgue measure on $\R^2$.
\end{proposition}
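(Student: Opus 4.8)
The plan is to compute the Hilbert dual unit ball at an arbitrary interior point of $Q_{\st}$, find its Euclidean area, and then invoke the definition of Holmes–Thompson volume via the Crofton/symplectic normalization, which for a $2$-dimensional Finsler surface reduces to integrating the area of the dual unit ball against Lebesgue measure and dividing by $\pi$ (the area of the Euclidean dual unit ball, so that the formula is correctly normalized against the Euclidean case).

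Let me think about the geometry. The Hilbert norm at a point $p=(x,y)$ in a bounded convex domain $\Omega$ is defined by: for a tangent direction $v$, one follows the ray from $p$ in direction $\pm v$ until it meets $\partial\Omega$ at points $p^{+}$ and $p^{-}$, and sets $\|v\|_{p}=\tfrac12 |v|\bigl(\tfrac{1}{|p^{+}-p|}+\tfrac{1}{|p^{-}-p|}\bigr)$. The unit ball of this norm is the convex body obtained by the harmonic-type construction, and it is in fact the convex hull / appropriate rescaling determined by the chord through $p$. For a polygon like $Q_{\st}$, the boundary consists of finitely many line segments, so the two exit points $p^{\pm}$ lie on specific edges depending on the direction $v$. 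Concretely, the primal Hilbert unit ball at $p$ is a convex polygon whose supporting data is piecewise determined by which pair of opposite edges of $Q_{\st}$ the chord through $p$ hits.

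First I would set up the four edges of $Q_{\st}$ (the lines $x=\pm1$, $y=\pm1$) and, for the point $(x,y)$, compute the Hilbert unit ball as a polygon. Because the boundary is piecewise linear, the primal unit ball at an interior point is itself a polygon whose vertices correspond to directions aimed at the vertices of $Q_{\st}$; its faces correspond to directions whose forward/backward chord endpoints lie on a fixed pair of edges. I would then pass to the dual unit ball (the polar dual of the primal unit ball), whose Euclidean area is what the proposition claims. The cleanest route is to use the known fact that for a point in a convex polygon the Hilbert unit ball is the polygon dual to a certain hexagon/quadrilateral built from the edge lines, and to compute the dual area directly by decomposing into triangles determined by the vertices and lines of $Q_{\st}$ as seen from $(x,y)$. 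The appearance of $\max\{|x|,|y|\}$ strongly suggests that the formula is symmetric under the dihedral symmetries of $Q_{\st}$ (reflections swapping $x\leftrightarrow y$ and sign changes), so I would reduce by symmetry to the region $0\le y\le x<1$ and only compute there; the maximum then becomes $|x|=x$ in that fundamental domain, and the full formula follows by symmetry.

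In that fundamental region I would identify precisely which edges the chords hit and write the primal Hilbert ball as an explicit polygon with vertices expressed as affine functions of $(x,y)$ with denominators $1\pm x$, $1\pm y$; dualizing and using the standard formula for the area of a polar dual (or directly computing the supporting-line data), I expect the factor $(1-x^{2})(1-y^{2})=(1-x)(1+x)(1-y)(1+y)$ to emerge as the product of the four relevant chord half-lengths, and the numerator $2+x$ to emerge from summing the triangle-area contributions. The main obstacle I anticipate is the careful casework and bookkeeping of \emph{which} pairs of edges are hit as the direction $v$ rotates through $2\pi$: near a point like $(x,y)$ the set of directions splits into several angular sectors (separated by the directions pointing toward the four vertices of $Q_{\st}$), and in each sector the exit points lie on a different edge pair, so the primal unit ball is genuinely piecewise-defined and one must track the vertices carefully before dualizing. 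Once the piecewise structure is pinned down and the symmetry reduction is in place, the area computation and the verification of the clean closed form $A_{Q_{\st}}(x,y)=\tfrac{2+\max\{|x|,|y|\}}{2(1-x^{2})(1-y^{2})}$ should be a finite, if somewhat delicate, elementary calculation, after which the integral formula for $\vol^{\HT}_{Q_{\st}}(U)$ is immediate from the definition of Holmes–Thompson area with its $1/\pi$ normalization.
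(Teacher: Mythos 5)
Your plan matches the paper's proof essentially step for step: the paper also computes the primal Hilbert unit ball at $(x,y)$ as a symmetric octagon with vertices $\pm\xi_{z_i}$ pointing toward the four vertices of $Q_\st$ (via the chord-endpoint formula $\|v\|=\tfrac12(\sigma^++\sigma^-)$ and the harmonic-symmetrization structure of Proposition~\ref{proposition: Harmonic Symmetrization of Polygon}), then dualizes using the Lohman--Morrison vertex formula (Theorem~\ref{thm:DualPolygon}) and evaluates the Euclidean area by the shoelace formula, with casework over the four sectors cut out by $y=\pm x$ producing the numerators $2\pm x$, $2\pm y$ and hence the $\max\{|x|,|y|\}$. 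The only cosmetic difference is that you reduce to the fundamental domain $0\le y\le x$ by dihedral symmetry while the paper writes out all four sectors explicitly; the $1/\pi$ normalization and the resulting integral formula are handled identically.
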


The general case of a pair of inscribed convex quadrilaterals $\mathbf P=(P_\inn,P_\out)$ presents significant challenges in concrete calculations.
Therefore we restrict our attention to two infinite families of examples of geometric interest in the study of convex real projective structures on surfaces.

\subsubsection{Hyperbolic quadrilaterals}
The first family arises from pairs of convex inscribed quadrilaterals $\mathbf P=(P_{\text{in}},P_{\text{out}})$ for which there exists an open ellipse $\mathcal E=\mathcal E(\textbf{P})$ such that $P_{\text{in}}$ is an ideal polygon in $\mathcal E$ and the boundary of $P_{\text{out}}$ is contained in the union of the tangent lines to $\mathcal E$ at the vertices of $P_{\text{in}}$.
Note that $\mathcal E$ equipped with its Hilbert metric is a (Klein--Beltrami) model of the hyperbolic plane and, for this reason, we say that the pair of inscribed quadrilaterals $\mathbf P$ is {\em hyperbolic}. 
In terms of Fock--Goncharov coordinates, for any choice of oriented diagonal for $P_{\text{in}}$ (see Remark \ref{rmk: combinatorics triple and double}),  these pairs are characterized by having both triple ratios equal to 1 and double ratios both equal to some $d\in (0,\infty)$, thus obtaining a one-parameter family of inscribed quadrilaterals which we denote $\bf P^{\textnormal{hyp}}(d)=(P_{\inn}^\textnormal{hyp}(d),P_{\out}^{\textnormal{hyp}}(d))$. The main result of Section ~\ref{sec:FuchsianLocus} is an explicit formula for the Holmes--Thompson area of $\bf P^{\textnormal{hyp}}(d)$ in terms of the double ratio $d$, which involves the {\em dilogarithm}.
\begin{definition}\label{def: dilogarithm}
For every $z\in\mathbb C$, the function $\displaystyle \dil(z)=-\int_0^z\frac{1}{v}\ln(1-v)dv$
is the {\em dilogarithm} of $z$.
\end{definition}
\begin{proposition}\label{prop: intro hyperbolic} For any $d>0$, let $\mathbf P^{\textnormal{hyp}}(d)=(P_{\inn}^\textnormal{hyp}(d),P_{\out}^{\textnormal{hyp}}(d))$ be a hyperbolic pair of inscribed convex quadrilaterals in $\mathbb {RP}^2$. 
Then, the Holmes--Thompson area of $P^{\textnormal{hyp}}_\inn(d)$ with respect to $P^{\textnormal{hyp}}_\out(d)$ is equal to
\begin{align*}
\vol^{\HT}&(\mathbf P^{\textnormal{hyp}}(d))=\frac{1}{2\pi}\Big(-\dil\left(-2d\right)
+2 \dil\left(\frac{-d}{1+d}\right)
-2\dil\left(-(1+d)\right)
-2\dil\left(\frac{d}{1+d}\right)\\
&-\dil\left(\frac{d}{2+d}\right)
+\dil\left(\frac{-d}{2+d}\right)
-2 \dil\left(-d\right)
-\dil\left(-(1+2d)\right)
-2\ln \left(1+d\right) \ln\left(\frac{2d}{1+d}\right)\\
&+\ln \left(\frac{2}{1+d}\right) 
\left(-3 \ln \left(\frac{2}{1+d}\right)
+\ln\left(\frac{2+4d}{1+d}\right)
+\ln (4)\right)
+2 \ln \left(\frac{4d}{1+d}\right)
\ln \left(\frac{1}{1+2d}\right)
+\frac{1}{2}\pi ^2\Big).
\end{align*}
\end{proposition}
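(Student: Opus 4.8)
The plan is to use projective invariance to place $P^{\textnormal{hyp}}_\out(d)$ at the standard quadrilateral $Q_\st$ and then integrate the explicit density of Proposition~\ref{prop:intro:integrand:quadrilateral} over the resulting inner quadrilateral $P^{\textnormal{hyp}}_\inn(d)$. By projective invariance of the Holmes--Thompson area (Remark~\ref{rem: volume invariant}) I may assume $P^{\textnormal{hyp}}_\out(d)=Q_\st$, the square with sides $x=\pm1$ and $y=\pm1$. The conics tangent to these four lines form a one-parameter pencil; imposing that the conic be an ellipse singles out the family $x^2-2txy+y^2=1-t^2$ with $t\in(-1,1)$ and centre the origin. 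Its points of tangency with the four sides are $(1,t),(t,1),(-1,-t),(-t,-1)$, and since $\mathbf P^{\textnormal{hyp}}(d)$ is hyperbolic these are exactly the vertices of $P^{\textnormal{hyp}}_\inn(d)=\mathcal E\cap P_\out$. Thus $P^{\textnormal{hyp}}_\inn(d)$ is the rectangle $\{|x+y|\le 1+t,\ |x-y|\le1-t\}$, whose diagonals are those of $Q_\st$; I then record the relation $t=t(d)$ coming from the double ratio in Fock--Goncharov coordinates (Remark~\ref{rmk: combinatorics triple and double}), with $t=0$ corresponding to $d=1$. For definiteness I assume $t\ge0$, i.e.\ $d\ge1$; the opposite case follows from the symmetry noted at the end.

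Next I reduce the integral by symmetry. The density $A_{Q_\st}(x,y)=\tfrac{2+\max\{|x|,|y|\}}{2(1-x^2)(1-y^2)}$ is invariant under the full dihedral symmetry group of $Q_\st$, while $P^{\textnormal{hyp}}_\inn(d)$ is invariant only under the order-four subgroup generated by $(x,y)\mapsto(-x,-y)$ and $(x,y)\mapsto(y,x)$. The four lines $x=0$, $y=0$, $y=x$, $y=-x$ cut $P^{\textnormal{hyp}}_\inn(d)$ into eight pieces on each of which $\max\{|x|,|y|\}$ and the signs of $x,y$ are constant, and these pieces fall into two orbits under the symmetry group according to the sign of $xy$. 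Choosing representatives, one is the quadrilateral $OABC$ with $O=(0,0)$, $A=(1-t,0)$, $B=(1,t)$, $C=(\tfrac{1+t}{2},\tfrac{1+t}{2})$ lying in $\{0\le y\le x\}$, and the other is the triangle $OAD$ with $D=(\tfrac{1-t}{2},-\tfrac{1-t}{2})$ lying in $\{0\le -y\le x\}$; on both $\max\{|x|,|y|\}=x$. Since the density is invariant under the group, each piece in a given orbit contributes equally, so by Proposition~\ref{prop:intro:integrand:quadrilateral}
\[
\vol^\HT(\mathbf P^{\textnormal{hyp}}(d))=\frac{4}{\pi}\left(\int_{OABC}+\int_{OAD}\right)\frac{2+x}{2(1-x^2)(1-y^2)}\,dx\,dy.
\]

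The integration itself is then routine. Integrating in $x$ first, the partial fraction decomposition $\tfrac{2+x}{2(1-x^2)}=\tfrac14\big(\tfrac{3}{1-x}+\tfrac{1}{1+x}\big)$ turns the inner integral into logarithms of the affine edge-functions of the two regions (the lines $x+y=1+t$, $x-y=1-t$ and the coordinate axes), evaluated as functions of $y$. Feeding these into $\tfrac{1}{1-y^2}=\tfrac12\big(\tfrac1{1-y}+\tfrac1{1+y}\big)$ reduces the remaining $y$-integrals to the standard type $\int\frac{\ln(a+by)}{c+y}\,dy$, each of which evaluates in terms of $\dil$. Summing the contributions of $OABC$ and $OAD$ and substituting $t=t(d)$ produces the stated expression.

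The main obstacle is not any single integral but the organization and the final simplification: the raw output is a large alternating sum of dilogarithms and products of logarithms whose arguments must be collapsed, via the reflection, inversion and Landen functional equations for $\dil$ together with logarithm identities, into the eight dilogarithm terms, the logarithm products, and the constant $\tfrac12\pi^2$ in the statement. A useful internal check is the symmetry $d\mapsto 1/d$, equivalently $t\mapsto-t$: because $A_{Q_\st}$ is invariant under $(x,y)\mapsto(x,-y)$, which carries $P^{\textnormal{hyp}}_\inn(d)$ to $P^{\textnormal{hyp}}_\inn(1/d)$, the area is invariant under $d\mapsto1/d$, and the closed form can be verified to have this symmetry after the dilogarithm identities are applied.
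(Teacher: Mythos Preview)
Your setup and the integration plan are sound, and the overall strategy---normalize $P_\out$ to $Q_\st$, identify $P_\inn$ as the rhombus $\{|x+y|\le 1+\alpha,\ |x-y|\le 1-\alpha\}$ (your parameter $t$ is the paper's $\alpha$, with $d=\tfrac{1+\alpha}{1-\alpha}$), use the dihedral symmetries of $A_{Q_\st}$ to cut the work, and reduce the resulting one-dimensional integrals to dilogarithms---matches the paper's approach. The main methodological difference is that the paper performs the change of variables $u=x+y$, $v=x-y$ \emph{before} integrating: this turns the domain of integration into the rectangle $[-(1+\alpha),1+\alpha]\times[-(1-\alpha),1-\alpha]$ and, via $\max\{|x|,|y|\}=\tfrac12(|u|+|v|)$, makes the numerator a simple polynomial in $|u|,|v|$. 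After restricting to the positive quadrant by symmetry, the paper then does partial fractions in $v$, obtaining a single $u$-integral whose antiderivative is written in terms of $\dil$ (with the aid of computer algebra). Your octant decomposition in the original $(x,y)$-coordinates is correct but forces the upper $x$-limit in $OABC$ to switch between $1-\alpha+y$ and $1+\alpha-y$ at $y=\alpha$, so you carry an extra case split that the $(u,v)$ coordinates avoid.

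Two places deserve tightening. First, you should make the relation $t(d)$ explicit rather than ``record'' it: the paper derives $d=\tfrac{1+\alpha}{1-\alpha}$ from the flag computation in Corollary~\ref{prop: VolumePositiveQuadruple} specialized to $t=t'=1$, $d=d'$, and this is what is substituted at the very end. Second, your last paragraph is honest about the difficulty but stops short of a proof: the paper, too, does not simplify by hand---it obtains the final closed form with \textsc{Mathematica} and then remarks that correctness can be checked a posteriori by differentiating the antiderivative in $u$. If you want your argument to stand alone, you should either carry out that verification or state explicitly that the simplification is computer-assisted and checked by differentiation, as the paper does.
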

This computation is based on the calculation of the area integrand of the standard quadrilateral $Q_\st$ in Proposition~\ref{prop:intro:integrand:quadrilateral}. We then use the explicit formula from Proposition~\ref{prop: intro hyperbolic} to show that the Holmes--Thompson area achieves a local maximum at $d=1$, roughly equal to $1/4$ of the area $2\pi$ of an ideal quadrilateral in the hyperbolic plane,  and that the area goes to $\frac{3}{8}\pi$ (in particular $>0$) when $d\to 0$ or $d\to \infty$. See Section~\ref{sec:FuchsianLocus} for more details.

\subsubsection{Convex projective structures on a thrice-punctured sphere}
The second family of pairs of inscribed quadrilaterals that we consider arise from the finite area convex projective structures on the thrice-punctured sphere $S_{0,3}$,  with the goal to prove Theorem~\ref{thm: intro: S03}.

Let thus $\rho \in \mathcal{CP}(S_{0,3})$. Consider four points $\xi_1,\ldots,\xi_4 \in \partial\Omega_\rho$ coming from vertices of an adjacent pair of triangles in the lift of the balanced ideal triangulation to $\Omega_\rho$.
An explicit computation, using \cite[Section 4]{BonahonDreyer_ParametrizingHitchinComponents} and the finite area condition, shows that the pair $\mathbf P=(P_{\text{in}},P_{\text{out}})$ of inscribed convex quadrilaterals corresponding to $\xi_1,\ldots,\xi_4$ and the projective lines tangent to $\Omega_\rho$ at these points has Fock--Goncharov parameters $t, t', d,d\in(0,\infty)$ such that $tt'=dd'=1$. See Section \ref{sec:ProjStrSphere} for more details.
We are thus naturally lead to consider pairs $\mathbf P(d,t)=(P_{\text{in}}(d,t),P_{\text{out}}(d,t))$ of inscribed convex quadrilaterals with triple ratios $t,t'$ and double ratios $d,d'$ satisfying $tt'=dd'=1$. 
We study the asymptotic behavior of the area for $d$ or $t$ going to zero or infinity. 
More precisely, in Section~\ref{sec:HTQuadrupleEx2} we prove the following theorem,  which together with the comparison result immediately implies Theorem~\ref{thm: intro: S03}.

\begin{theorem}\label{thm: intro estimate}
Consider the family $\mathbf P(d,t)$ of pairs of inscribed convex quadrilaterals with Fock--Goncharov parameters $t,t',d,d'\in(0,\infty)$ satisfying $tt'=1$ and $dd'=1$. 
Then, there exists $C\in(0,\infty)$ such that
\[
\lim_{\|(\ln(d),\ln(t))\|\to\infty}\frac{\vol^{\HT}(\mathbf P(d,t))}{\ln^2(d)+\ln^2(t)}=C.
\]
\end{theorem}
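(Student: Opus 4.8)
The plan is to reduce $\vol^{\HT}(\mathbf P(d,t))$ to a single explicit planar integral and then isolate its leading asymptotics. By projective invariance of the Holmes--Thompson area (Remark~\ref{rem: volume invariant}) I first normalize $P_\out(d,t)=Q_\st$ using the $\mathsf{PGL}_3(\mathbb R)$-action, so that the four tangency points sit one on each edge of $Q_\st$. Reconstructing a positive quadruple of flags from its Fock--Goncharov coordinates, as in \cite[Section~4]{BonahonDreyer_ParametrizingHitchinComponents}, expresses the four vertices of $P_\inn(d,t)$ as explicit functions of $d,t$; the constraints $dd'=tt'=1$ eliminate two coordinates and leave a genuine two-parameter family. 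Proposition~\ref{prop:intro:integrand:quadrilateral} then turns the statement into the study of
\[
\vol^{\HT}(\mathbf P(d,t))=\frac1\pi\int_{P_\inn(d,t)}\frac{2+\max\{|x|,|y|\}}{2(1-x^2)(1-y^2)}\,d\mu(x,y)
\]
as $\|(\ln d,\ln t)\|\to\infty$.

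To evaluate the integral I would proceed as in the proof of Proposition~\ref{prop: intro hyperbolic}: split $Q_\st$ along the diagonals $y=\pm x$ so that $\max\{|x|,|y|\}$ is resolved into $|x|$ or $|y|$ on each of the four resulting triangles, and integrate the resulting rational integrand over the corresponding pieces of $P_\inn(d,t)$. This produces a closed expression of the same shape as in Proposition~\ref{prop: intro hyperbolic}, a sum of logarithms and dilogarithms of rational functions of $d$ and $t$. The asymptotics are then governed by the large-argument behavior of the dilogarithm, namely $\dil(z)\sim-\tfrac12\ln^2(-z)$ as $z\to-\infty$: each dilogarithm whose argument runs off to infinity, together with each divergent log-log product, contributes a term quadratic in $\ln d,\ln t$, while all remaining contributions are $O(\|(\ln d,\ln t)\|)$. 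A key feature, illustrated by the analogous one-parameter computation behind Proposition~\ref{prop: intro hyperbolic} (where the area stays bounded as $d\to0,\infty$), is that many of these candidate quadratic terms cancel between the dilogarithmic and the log-log contributions; the surviving terms assemble into a single quadratic form $Q(\ln d,\ln t)$. Geometrically this divergence localizes at the corners of $Q_\st$, toward which vertices of $P_\inn(d,t)$ slide as a parameter degenerates and where both factors $1-x^2$ and $1-y^2$ vanish.

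The crux---and the reason the statement is a genuine limit rather than a two-sided estimate---is to show that $Q$ is \emph{isotropic}, i.e.\ $Q(\ln d,\ln t)=C(\ln^2 d+\ln^2 t)$ with $C\in(0,\infty)$. Writing $Q(\ln d,\ln t)=A\ln^2 d+2B\ln d\ln t+E\ln^2 t$, one must verify $B=0$ and $A=E$; granting this, dividing the displayed integral by $\ln^2 d+\ln^2 t=\|(\ln d,\ln t)\|^2$ and absorbing the $O(\|(\ln d,\ln t)\|)$ remainder yields the limit $C=A$. I expect the vanishing of the cross term and the equality of the two diagonal coefficients to come from the symmetries of the flag quadruple---which exchange its two triangles and reverse its diagonal, acting on $(\ln d,\ln t)$ by sign changes and permutations---together with the exact cancellations in the closed form; carrying out these cancellations, and thereby pinning down the value of $C$, is the step I expect to demand the most care. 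Positivity $C>0$ then follows since the integrand is strictly positive and the integral genuinely diverges away from $d=t=1$, and finiteness $C<\infty$ from the boundary divergence being purely quadratic in $\ln d,\ln t$.
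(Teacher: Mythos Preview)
Your route diverges from the paper's and has a real gap. You plan to compute a two-parameter closed form and extract the quadratic lead via dilogarithm asymptotics and symmetry, but neither step is carried out, and the symmetry step is mis-identified. The two-parameter antiderivative would be substantially worse than the one-parameter hyperbolic case of Proposition~\ref{prop: intro hyperbolic} (which already needed computer algebra); the paper explicitly avoids it. More importantly, the flag-quadruple symmetries you name---exchanging the two triangles, reversing the diagonal---act on $(\ln d,\ln t)$ only by sign changes of the individual coordinates (see Remark~\ref{rmk: combinatorics triple and double}); none of them swaps $\ln d$ with $\ln t$, since double and triple ratios are genuinely different invariants. There \emph{is} a symmetry giving $A=E$, but it lives in the normalized affine picture, not in the flag combinatorics: the reflection $(x,y)\mapsto(y,x)$ of $Q_\st$ preserves $A_{Q_\st}$ and sends the inner quadrilateral with parameters $(\alpha,\beta)$ to the one with $(\beta,\alpha)$, which in $(d,t)$-coordinates is $(\ln d,\ln t)\mapsto(-\ln t,-\ln d)$. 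Likewise $(x,y)\mapsto(-x,y)$ gives $\beta\mapsto-\beta$, hence $\ln d\mapsto-\ln d$, forcing $B=0$. Even granting these, you would still need to prove that the remainder after the quadratic term is $o(\|(\ln d,\ln t)\|^2)$, and without the closed form there is nothing to analyze.

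The paper instead uses a geometric decomposition. After normalizing $P_\out=Q_\st$, the constraints $tt'=dd'=1$ force the inner quadrilateral $Q_{\alpha,\beta}$ to have vertices $(\beta,\pm1)$, $(\pm1,\alpha)$ with $\alpha=(1-t)/(1+t)$, $\beta=(d-1)/(d+1)$. One inserts the \emph{non-ideal} square $Q'=[-\gamma,\gamma]^2$, with $\gamma=(1-\alpha\beta)/(2-\alpha-\beta)$ chosen so that $(\gamma,\gamma)\in\partial Q_{\alpha,\beta}$; the identity $1-\gamma=(1-\alpha)(1-\beta)/(2-\alpha-\beta)$ gives $\ln(1-\gamma)\sim\ln(1-\max\{\alpha,\beta\})$. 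By the four-fold symmetry of $Q'$ and of $A_{Q_\st}$, the integral over $Q'$ reduces to four copies of the one-parameter triangle integral $f(\gamma)$ already evaluated in the proof of Lemma~\ref{lem:Talpha and Tbeta}, giving the main term directly with no two-parameter closed form. The complement $Q_{\alpha,\beta}\setminus Q'$ is four thin triangles $\Delta_i$, each with one ideal vertex; these are shown to be $o(\ln^2(1-\max\{\alpha,\beta\}))$ by elementary bounds (a concavity estimate for $\arctanh$ and $\ln(1+z)\le z$), with no dilogarithms needed.
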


\subsection{Inscribed triangles}
For convex projective structures on more general surfaces,  not only the thrice-punctured sphere,  we need to compute the Holmes--Thompson area of a pair of inscribed triangles. 
The case of triangles is computationally more simple,  and has already attracted some attention in the context of other notions of area.

The first step is,  as in the case of quadrilaterals,  the computation of the Holmes--Thompson area of subsets of the {\em standard triangle} $T_\st$ with vertices $(0,0)$,  $(0,2)$ and $(2,0)$,  completed in Section~\ref{sec:HTTriple}.

\begin{proposition}
\label{prop:intro:integrand:triangle}
The dual unit ball for the Hilbert metric at a point $(x,y)$ in the interior of 
$T_\st$ has Euclidean area $A_{T_\st}$ equal to
	\[
	A_{T_\st}(x,y) = \frac{3}{2xy(2-x-y)}.
	\]
\end{proposition}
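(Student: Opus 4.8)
The plan is to compute the Hilbert Finsler unit ball at an interior point $p=(x,y)$ of $T_\st$ explicitly, pass to its polar dual, and read off the Euclidean area. Write the three edges of $T_\st$ as the zero sets of the affine forms $\ell_1=y$, $\ell_2=x$ and $\ell_3=2-x-y$, so that $T_\st=\{\ell_1,\ell_2,\ell_3>0\}$ and $\ell_1+\ell_2+\ell_3\equiv 2$. For a direction $v=(a,b)$ the chord through $p$ in direction $v$ exits $\partial T_\st$ forward at parameter $t^+$ and backward at $t^-$, and the Hilbert norm is $F(v)=\tfrac12\big(\tfrac1{t^+}+\tfrac1{t^-}\big)$. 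Since $\ell_i(p+tv)=\ell_i(p)+t\,d\ell_i(v)$ vanishes at $t=-\ell_i(p)/d\ell_i(v)$, and a point of $T_\st$ leaves the triangle exactly when the first of the three forms hits $0$, setting $w_i(v):=d\ell_i(v)/\ell_i(p)$ gives $1/t^+=\max_i(-w_i)$ and $1/t^-=\max_i w_i$, hence
\[
F(v)=\tfrac12\big(\max_i w_i(v)-\min_i w_i(v)\big)=\max_{i<j}\tfrac12\,|w_i(v)-w_j(v)|.
\]

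Next I would record the three forms $w_i$ as covectors on $\R^2$: with $z:=2-x-y$ one has $w_1=(0,1/y)$, $w_2=(1/x,0)$ and $w_3=(-1/z,-1/z)$. Writing $g_1=\tfrac12(w_1-w_2)$, $g_2=\tfrac12(w_2-w_3)$, $g_3=\tfrac12(w_3-w_1)$, the displayed formula says $F(v)=\max_k|\langle g_k,v\rangle|$, so the unit ball is $\{v:|\langle g_k,v\rangle|\le 1\ \forall k\}=(\operatorname{conv}\{\pm g_1,\pm g_2,\pm g_3\})^\circ$. By the bipolar theorem the dual unit ball is therefore the centrally symmetric hexagon $\operatorname{conv}\{\pm g_1,\pm g_2,\pm g_3\}$, and, crucially, the three covectors satisfy $g_1+g_2+g_3=0$ automatically, since they are the consecutive differences around the cycle $w_1\to w_2\to w_3\to w_1$.

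The area is then immediate. For any three vectors with $g_1+g_2+g_3=0$ and $g_1,g_2$ independent, the hexagon $\operatorname{conv}\{\pm g_1,\pm g_2,\pm g_3\}$ has vertices $g_1,\,-g_3,\,g_2,\,-g_1,\,g_3,\,-g_2$ in cyclic order, and a shoelace computation collapses (using $-g_3=g_1+g_2$) to Euclidean area $3\,|g_1\wedge g_2|$, where $g_1\wedge g_2$ denotes the $2\times2$ determinant. A direct evaluation gives
\[
g_1\wedge g_2=\tfrac14\Big(-\tfrac1{xz}-\tfrac1{xy}-\tfrac1{yz}\Big)=-\frac{x+y+z}{4xyz}=-\frac{1}{2xyz},
\]
the last step using $x+y+z=2$. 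Hence the dual unit ball has area $3\,|g_1\wedge g_2|=\frac{3}{2xyz}=\frac{3}{2xy(2-x-y)}$, as claimed.

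The main obstacle is the combinatorial step hidden in the first paragraph: justifying $1/t^+=\max_i(-w_i)$ and $1/t^-=\max_i w_i$ requires checking, direction by direction, which pair of edges the chord through $p$ actually meets, equivalently that the forward (resp. backward) exit is governed by the largest of the $-w_i$ (resp. $w_i$). The constraint $\sum_i\ell_i\equiv 2$, which forces $\sum_i \ell_i(p)\,w_i(v)=0$ and hence guarantees both a strictly positive and a strictly negative $w_i$ for every $v\neq 0$, is what makes this bookkeeping clean and simultaneously yields the nondegeneracy $g_1\wedge g_2\neq 0$ needed for the hexagon to be genuinely six-sided. Everything after that is the short determinant evaluation above.
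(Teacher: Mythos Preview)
Your proof is correct and takes a genuinely different, more conceptual route than the paper. The paper proceeds by first computing the six vertices $\pm\xi_{z_i}$ of the unit ball (via the harmonic symmetrization picture, shooting rays towards the vertices of $T_\st$), then feeding these into the Lohman--Morrison formula (Theorem~\ref{thm:DualPolygon}) to get the dual vertices $\pm w_i$, and finally applying the shoelace formula to those. Your approach instead writes the Hilbert Finsler norm on a simplex directly as a maximum of linear forms, $F(v)=\max_k|\langle g_k,v\rangle|$, which immediately identifies the dual unit ball as $\operatorname{conv}\{\pm g_k\}$ without ever computing the primal vertices or invoking Lohman--Morrison. The closure relation $g_1+g_2+g_3=0$, coming from $\sum_i\ell_i\equiv 2$, then collapses the shoelace sum to the single determinant $3|g_1\wedge g_2|$, making the factor $3$ and the denominator $xyz$ structurally transparent. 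Your argument is slicker for the triangle and would generalize cleanly to higher-dimensional simplices; the paper's approach, on the other hand, sets up machinery (Proposition~\ref{proposition: Harmonic Symmetrization of Polygon} and Theorem~\ref{thm:DualPolygon}) that carries over verbatim to the quadrilateral case in Section~\ref{sec:HTQuadruple}, where no analogue of your closure trick is available.
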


From this we obtain the following direct corollaries.
If $\mathbf{P}$ is a pair of inscribed convex triangles,  then $\mathbf P$ is parametrized by a single invariant $t=t(\mathbf P)>0$,  the triple ratio. We then prove:

\begin{corollary}
\label{prop: intro triangle}
Let $\mathbf P(t)=(P_\inn(t),P_\out(t))$ be a pair of inscribed convex triangles in $\mathbb{R}^2$ with Fock--Goncharov parameter $t\in(0,\infty)$. 
Then, the Holmes--Thompson area of $\mathbf P$ satisfies 
\[
\vol^{\HT}(\mathbf P(t))=\frac{3}{8\pi}\left(\pi^2+\ln^2(t)\right).
\]
\end{corollary}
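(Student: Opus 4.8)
The plan is to reduce the statement to the explicit area integrand of Proposition~\ref{prop:intro:integrand:triangle} and then evaluate a single concrete integral. Since the Holmes--Thompson area is a projective invariant (Remark~\ref{rem: volume invariant}), I would first act by an element of $\PGL_3(\R)$ to normalize $P_\out(t)$ to be the standard triangle $T_\st$ with vertices $(0,0),(2,0),(0,2)$; with this normalization the three flag lines become the edges of $T_\st$, carried by $y=0$, $x+y=2$ and $x=0$, and the three flag points $p_1,p_2,p_3$ --- the vertices of $P_\inn(t)$ --- lie one on each edge. Just as in the quadrilateral case, Proposition~\ref{prop:intro:integrand:triangle} together with the definition of Holmes--Thompson volume gives
\[
\vol^{\HT}(\mathbf P(t)) = \frac{1}{\pi}\int_{P_\inn(t)} \frac{3}{2xy(2-x-y)}\,d\mu(x,y),
\]
so everything reduces to computing this integral as a function of $t$.

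Next I would pin down a convenient representative of $P_\inn(t)$. Writing $p_1=(a,0)$, $p_2=(b,2-b)$, $p_3=(0,c)$, a direct computation of the triple ratio from the flag vectors and covectors gives $t=\tfrac{a(2-b)(2-c)}{bc(2-a)}$ (for one of the two labeling conventions). Because the triple ratio is a complete projective invariant of a pair of inscribed triangles, I am free to replace $P_\inn(t)$ by any pair with the same $t$; using the $2$-dimensional group of projective maps fixing $T_\st$ I would move to the symmetric representative $a=b=c=s$ with $s=\tfrac{2}{1+t}$, for which indeed $t=\tfrac{2-s}{s}$. (The competing convention only swaps $t\leftrightarrow 1/t$, which leaves the claimed formula unchanged.) Passing to the coordinates $u=x$, $v=y$, $w=2-x-y$, so that $u+v+w=2$ and the Jacobian is $1$, the integrand becomes the symmetric $\tfrac{3}{2uvw}$, and $P_\inn(t)$ becomes the triangle with $(u,w)$-vertices $(s,0),(s,2-s),(0,2-s)$, bounded by $u=s$, $w=2-s$ and the hypotenuse $\tfrac{u}{s}+\tfrac{w}{2-s}=1$.

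I would then integrate in $w$ first. Using the partial fraction $\tfrac1{vw}=\tfrac1{2-u}\bigl(\tfrac1w+\tfrac1v\bigr)$, valid since $v+w=2-u$, the inner integral is elementary, and after simplifying the two logarithmic endpoints it collapses to
\[
\vol^{\HT}(\mathbf P(t)) = \frac{3}{2\pi}\int_0^s \frac{1}{u(2-u)}\,\ln\!\frac{s^2+2(1-s)u}{(s-u)^2}\,du .
\]
One checks the integrand is integrable at both endpoints: at $u=0$ the bracketed logarithm vanishes to first order, and at $u=s$ only an integrable $\ln(s-u)$ singularity survives. Splitting $\tfrac1{u(2-u)}=\tfrac12(\tfrac1u+\tfrac1{2-u})$ and expanding the logarithm reduces the problem to four standard dilogarithmic integrals of the type $\int\tfrac1u\ln(s-u)\,du$ and $\int\tfrac1{2-u}\ln\!\bigl(s^2+2(1-s)u\bigr)\,du$, together with elementary $\ln\cdot\ln$ boundary terms.

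The main obstacle is the final simplification: each piece individually produces a dilogarithm, and the content of the corollary is that all dilogarithmic contributions cancel, leaving only $\pi^2$ and $\ln^2 t$. I would carry this out with the reflection identity $\dil(z)+\dil(1-z)=\tfrac{\pi^2}{6}-\ln z\ln(1-z)$ and the inversion identity $\dil(z)+\dil(1/z)=-\tfrac{\pi^2}{6}-\tfrac12\ln^2(-z)$, the latter being precisely the source of the $\ln^2 t$ term once the arguments are rewritten via $t=\tfrac{2-s}{s}$. As a consistency check, at $t=1$ (so $s=1$) the numerator equals $1$ and the integral degenerates to $-2\int_0^1\tfrac{\ln(1-u)}{u(2-u)}\,du=\dil(1)+\tfrac{\pi^2}{12}=\tfrac{\pi^2}{4}$, giving $\vol^{\HT}=\tfrac{3}{2\pi}\cdot\tfrac{\pi^2}{4}=\tfrac{3\pi}{8}$, in agreement with $\tfrac{3}{8\pi}\pi^2$; I expect the general case to follow the same pattern once the dilogarithm identities are applied.
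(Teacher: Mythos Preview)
Your setup is correct and the reduction to the one-variable integral is valid; the symmetric normalization $a=b=c=s$ with $t=(2-s)/s$ is a legitimate alternative to the paper's choice $w_1=(1,1),\,w_2=(1,0),\,w_3=(0,s)$, and your consistency check at $t=1$ matches the paper's computation exactly. But your route diverges from the paper's at precisely the point you flag as the ``main obstacle''. The paper does \emph{not} carry out the dilogarithm cancellation for general $t$. Instead it invokes de~la~Harpe's result \cite{delaHarpe_OnHilbertsmetricforsimplices} that a simplex with its Hilbert metric is a normed vector space, which forces the Holmes--Thompson area and the Adeboye--Cooper $p$-area to be constant multiples of one another, with a scalar $\mu$ independent of $t$; since Adeboye--Cooper already proved the $p$-area equals $\tfrac18(\pi^2+\ln^2 t)$, the paper only needs to determine $\mu$, and does so via the single case $t=1$.

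So your proposal is not wrong, but it is incomplete precisely where the paper's is complete by citation: the general-$t$ simplification you intend to perform with the reflection and inversion identities is essentially the content of \cite[Lemma~3.3]{AdeboyeCooper_AreaConvexProjSurfacesFGCoord}, and you have chosen to reprove it rather than quote it. That can certainly be done---your integral does evaluate to $\tfrac14(\pi^2+\ln^2 t)$---but it is a genuine dilogarithm computation with several terms to track, not a formality, and ``I expect the general case to follow the same pattern'' is not yet a proof. The paper's shortcut trades that computation for two external results (de~la~Harpe plus Adeboye--Cooper); your approach would be self-contained but requires you to actually execute the step you have only sketched.
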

Again, up to the action of $\mathsf{PGL}_3(\mathbb R)$,  we can normalize $P_\out$ to be equal to $T_\st$. 
We apply the result of Proposition~\ref{prop:intro:integrand:triangle}. The computation of the integral can then be reduced to the integral computed in \cite[Lemma 3.3]{AdeboyeCooper_AreaConvexProjSurfacesFGCoord} since \cite{delaHarpe_OnHilbertsmetricforsimplices} implies that the Holmes--Thomposon area in this case is a scalar multiple of the $p$--area they consider.

As a corollary we state the analogous result of Adeboye--Cooper \cite[Theorem 0.2]{AdeboyeCooper_AreaConvexProjSurfacesFGCoord}---now in terms of the Holmes--Thompson area. 

\begin{corollary}\label{cor:surfaces}
Suppose $\mathcal{T}$ is an ideal triangulation of a punctured surface  $S$ of negative Euler characteristic $\chi(S)$,  and $\rho$ is a convex projective structure on $S$.
Then
\[\vol^\HT(\rho) \geq \frac{3}{8}(-2\pi \chi(S)) + \frac{3}{8\pi} \sum_{i=1}^{-2\chi(S)}\ln^2(t_i),\]
where $t_i$ are the triple ratios of $\rho$ associated to the $-2\chi(S)$ triangles in the ideal triangulation $\mathcal{T}$ of $S$.
\end{corollary}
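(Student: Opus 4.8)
The plan is to decompose $\vol^\HT(\rho)$ along the triangulation $\mathcal{T}$ and to bound the contribution of each triangle from below using the comparison principle of Remark~\ref{rem: volume decreasing} together with Corollary~\ref{prop: intro triangle}. First I would recall that an ideal triangulation of a punctured surface $S$ has exactly $-2\chi(S)$ triangles: writing $\chi(S)=2-2g-n$ for genus $g$ and $n$ punctures, an Euler-characteristic count gives $-2\chi(S)=4g-4+2n$ triangles. Lifting $\mathcal{T}$ to the universal cover, identified with $\Omega_\rho$, a fundamental domain for the holonomy action is tiled by $-2\chi(S)$ ideal triangles, and since the Holmes--Thompson area is a measure it is additive over this tiling. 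Hence $\vol^\HT(\rho)=\sum_{i=1}^{-2\chi(S)}\vol^\HT_{\Omega_\rho}(P_\inn(t_i))$, where $P_\inn(t_i)$ denotes the $i$-th lifted ideal triangle and $t_i$ its associated triple ratio.

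Next I would fix one such triangle $P_\inn(t_i)$ with ideal vertices on $\partial\Omega_\rho$ and let $P_\out(t_i)$ be the triangle cut out by the three projective lines tangent to $\partial\Omega_\rho$ at these vertices, so that $\mathbf{P}(t_i)=(P_\inn(t_i),P_\out(t_i))$ is a pair of inscribed convex triangles with Fock--Goncharov parameter $t_i$. Because $\Omega_\rho$ is properly convex and each tangent line is a supporting line, $\Omega_\rho$ lies on the inner side of all three lines, and therefore $\Omega_\rho\subset P_\out(t_i)$. Applying Remark~\ref{rem: volume decreasing} with $\Omega=\Omega_\rho$ and $\Omega'=P_\out(t_i)$ yields $\vol^\HT_{\Omega_\rho}(P_\inn(t_i))\geq \vol^\HT_{P_\out(t_i)}(P_\inn(t_i))=\vol^\HT(\mathbf{P}(t_i))$, and Corollary~\ref{prop: intro triangle} evaluates the right-hand side as $\tfrac{3}{8\pi}\bigl(\pi^2+\ln^2(t_i)\bigr)$.

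Finally I would sum these bounds over the $-2\chi(S)$ triangles. The constant terms contribute $\sum_i \tfrac{3}{8\pi}\pi^2=\tfrac{3\pi}{8}\cdot(-2\chi(S))=\tfrac{3}{8}(-2\pi\chi(S))$, while the remaining terms contribute $\tfrac{3}{8\pi}\sum_{i=1}^{-2\chi(S)}\ln^2(t_i)$, which together give exactly the stated inequality.

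The main obstacle I anticipate is not the summation but the bookkeeping in the first two steps: verifying that the triple ratio of each lifted triangle in the Fock--Goncharov coordinates genuinely matches the parameter $t_i$ normalized as in Corollary~\ref{prop: intro triangle}, and confirming that the circumscribed triangle $P_\out(t_i)$ is well-defined and contains $\Omega_\rho$, i.e. that the three supporting lines are in general position and bound a triangle on the correct side. Once the inclusion $\Omega_\rho\subset P_\out(t_i)$ and the additivity of the Holmes--Thompson measure over the tiling are in place, the comparison principle and Corollary~\ref{prop: intro triangle} make the estimate immediate.
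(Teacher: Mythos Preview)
Your proposal is correct and matches the paper's approach: the paper does not spell out a proof but presents the result as the Holmes--Thompson analogue of Adeboye--Cooper's theorem, obtained by combining Corollary~\ref{prop: intro triangle} with the comparison principle of Remark~\ref{rem: volume decreasing} over the ideal triangles of $\mathcal{T}$. Your decomposition, the inclusion $\Omega_\rho\subset P_\out(t_i)$ via supporting tangent lines, and the final summation are exactly the intended argument.
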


A similar result holds for closed surfaces of negative Euler characteristic,  when we replace the ideal triangulation by a maximal geodesic lamination \cite{BonahonDreyer_characters}.

In the hyperbolic case (when $t_i=1$ for all $i$), the right-hand side in Corollary~\ref{cor:surfaces} is equal to $\frac{3}{8}(-2\pi\chi(S))$, thus computing exactly $3/8$ of the hyperbolic area.
Note that we have normalized the Holmes--Thompson area in such a way that it agrees with the hyperbolic area in the case that the properly convex domain is an ellipse,  see Section~\ref{sec:HTArea}.

It would be desirable to improve this estimate by taking not only the triple ratios, but also the double ratios into account, as the triple and double ratios (given a choice of topological data on $S$) completely parametrize $\mathcal{CP}(S)$ \cite{FockGoncharov_ModuliSpacesConvexProjectiveStructuresSurfaces, BonahonDreyer_ParametrizingHitchinComponents}.
This idea was our original motivation to consider quadrilaterals. Specifically, it would be interesting to obtain asymptotics or a formula (as in Corollary \ref{prop: intro triangle}) for $\vol^\HT(\mathbf{P})$ when $\mathbf{P}$ is a general pair of inscribed quadrilaterals, although this seems to be quite a demanding task.

\subsection*{Outline of the paper and proof strategies} 
In Section~\ref{sec:Preliminaries} we collect background material on positive tuples of flags,  their relation to pairs of inscribed polygons in $\mathbb{RP}^2$, the Holmes--Thompson volume for Finsler manifolds, and the interplay of these object. 
In particular, we recall a technical result (Theorem~\ref{thm:DualPolygon}) from \cite{LohmanMorrison_PolarsConvexPolygons} which allows us to explicitly find the dual of the unit ball (with respect to the Finsler norm) in the tangent space at a generic point in a convex polygon. 
In Section~\ref{sec:HTTriple} we establish Corollary~\ref{prop: intro triangle} by first computing the integrand for the Holmes--Thompson area integral (Proposition~\ref{prop:intro:integrand:triangle}), and then reducing this computation to an integral computed by Adeboye and Cooper in \cite{AdeboyeCooper_AreaConvexProjSurfacesFGCoord}. 
Section~\ref{sec:HTQuadruple} is dedicated to quadrilaterals.
Again,  using Theorem~\ref{thm:DualPolygon},  we find the general formula for the Holmes--Thompson area for a subset of the standard quadrilateral $Q_\st$ (Proposition~\ref{prop:intro:integrand:quadrilateral}). 
The proof of Proposition~\ref{prop: intro hyperbolic} is carried out by explicit computations in Section~\ref{sec:FuchsianLocus}. 
We break the proof of Theorem~\ref{thm: intro estimate}, which is established in Section~\ref{sec:HTQuadrupleEx2}, into several steps which can be summarized as follows:
(a) find two subsets $T_1,T_2$ of $P_{\text{out}}$ which are contained in $P_{\text{in}}$ and have Holmes--Thompson area comparable to $\ln^{2}(d)$ and $\ln^2(t)$, respectively;
(b) find a non-ideal quadrilateral $Q$ in $P_{\text{out}}$ such that $Q\cap P_{\text{in}}$ contains $T_1,T_2$ and such that the Holmes--Thompson area of $(Q,P_{\text{out}})$ grows like $\ln^{2}(d)+\ln^2(t)$;
(c) Show that $P_{\text{in}} \setminus Q$, which is the union of four triangles in $P_{\text{out}}$, each with one ideal vertex, grows at a rate slower than the minimum of $\ln^{2}(d)$ and $\ln^2(t)$. We finish with the proof of Theorem~\ref{thm: intro: S03} in Section~\ref{sec:ProjStrSphere}.

\subsection*{Acknowledgments}
The authors thank Ilesanmi Adeboye, Caleb Ashley, Samuel Bronstein, and Constantin Vernicos for their interest and helpful discussions. The authors are grateful to the IHES and the IHP for excellent working conditions during their visits in 2023 and 2025 respectively.

The authors acknowledge support of the Institut Henri Poincaré (UAR 839 CNRS-Sorbonne Université), and LabEx CARMIN (ANR-10-LABX-59-01).
This project has received funding from the European Research Council (ERC) under the European Union's Horizon 2020 research and innovation programme (grant agreement No 101018839), ERC GA 101018839. 
The second author acknowledges partial support by an American Mathematical Society (AMS)-
Simons Research Enhancement Grant for PUI Faculty. 

\section{Preliminaries}
\label{sec:Preliminaries}
In this section we introduce the necessary preliminaries on positive tuples of flags, the Hilbert metric on properly convex domains, and their associated Holmes--Thompson volume. We finish the section by recording a result (Theorem \ref{thm:DualPolygon}) on duals of polygons which will be important when computing the Holmes--Thompson volume in the case that the properly convex domain is a polygon.

\subsection{Positive triples and quadruples of flags}
\label{sec:PosTriplesQuadruples} 

Standard references for the material in this subsection
are \cite{FockGoncharov_ModuliSpacesConvexProjectiveStructuresSurfaces, BonahonDreyer_ParametrizingHitchinComponents}.

A {\em (complete) flag} $F$ in $\mathbb R^3$ is a sequence of vector subspaces $\{0\}=F^0\subset F^1\subset F^2\subset F^3=\mathbb R^3$ such that $\dim F^i=i$ for $i=0,1,2,3$. We denote the space of flags by $\mathcal F$.

\begin{notation}\label{not: flag smatrix} It will be convenient to write a flag as a $3$-by-$2$ matrix:
\[
E=\begin{bmatrix}
\vert & \vert\\
e_1&e_2\\
\vert &\vert
\end{bmatrix}
\]
where $e_1$ and $e_2$ are non-zero vectors in $\mathbb R^3$ such that $E^1$ is the span of $e_1$ and $E^2$ is the span of $e_1$ and $e_2$. In this case, we say that the ordered pair of vectors $(e_1,e_2)$ is a representative for the flag $E$.
\end{notation}

A triple of flags $(E,F,G)\in \mathcal F^3$ is in {\em general position} if for any $a,b,c\in \{0,1,2\}$ such that $a+b+c=3$, we have
\[
E^a+ F^b+ G^c=\mathbb R^3.
\]
Similarly, a quadruple of flags $(E,F,G,H)\in\mathcal F^4$ is in {\em general position} if for any $a,b,c,d\in \{0,1,2\}$ such that $a+b+c+d=3$, we have
\[
E^a+F^b+G^c+H^d=\mathbb R^3.
\]
Fix once and for all an identification $\mathsf{\Lambda}^3 \mathbb R^3\cong \mathbb R$. 

\begin{definition}[Triple ratio] Consider a triple of flags $(E,F,G)$ in general position such that
\[
E=\begin{bmatrix}
\vert & \vert\\
e_1&e_2\\
\vert &\vert
\end{bmatrix}\qquad F=\begin{bmatrix}
\vert & \vert\\
f_1&f_2\\
\vert &\vert
\end{bmatrix}\qquad G=\begin{bmatrix}
\vert & \vert\\
g_1&g_2\\
\vert &\vert
\end{bmatrix}.
\]
The {\em triple ratio} of $(E,F,G)$ is 
\begin{equation}\label{eq:triple ratio}
T(E,F,G)\coloneqq\frac{e_1\wedge e_2\wedge f_1}{f_1\wedge g_1\wedge g_2}\cdot \frac{e_1\wedge g_1\wedge g_2}{e_1\wedge f_1\wedge f_2}\cdot \frac{f_1\wedge f_2\wedge g_1}{e_1\wedge e_2\wedge g_1}\in\mathbb R \setminus \{0\}.
\end{equation}
\end{definition}

\begin{definition}[Double ratios] Consider a quadruple of flags $(E,F,G,H)$ in general position such that
\[
E=\begin{bmatrix}
\vert & \vert\\
e_1&e_2\\
\vert &\vert
\end{bmatrix}\qquad F=\begin{bmatrix}
\vert & \vert\\
f_1&f_2\\
\vert &\vert
\end{bmatrix}\qquad G=\begin{bmatrix}
\vert & \vert\\
g_1&g_2\\
\vert &\vert
\end{bmatrix}\qquad H=\begin{bmatrix}
\vert & \vert\\
h_1&h_2\\
\vert &\vert
\end{bmatrix}.
\]
The {\em double ratios} of $(E,F,G,H)$ are 
\begin{align*}
D_1(E,F,G,H)&\coloneqq-\frac{e_1\wedge g_1\wedge f_1}{e_1\wedge g_1\wedge h_1}\cdot \frac{g_1\wedge g_2\wedge h_1}{g_1\wedge g_2\wedge f_1}\in\mathbb R \setminus \{0\},\quad \text{ and}\\
D_2(E,F,G,H)&\coloneqq-\frac{e_1\wedge e_2\wedge f_1}{e_1\wedge e_2\wedge h_1}\cdot \frac{e_1\wedge g_1\wedge h_1}{e_1\wedge g_1\wedge f_1}\in\mathbb R \setminus \{0\}.
\end{align*}
\end{definition}

The triple and double ratios are well-defined and non-zero because the flags are in general position, and their expressions do not depend on the choices of representatives for the flags. Since any two identifications $\mathsf{\Lambda}^3\mathbb R^3\cong \mathbb R$ differ by a non-zero scalar, the triple and double ratios do not depend on this choice. In practice, for $v_1,v_2,v_3\in\mathbb R^3$, one computes $v_1\wedge v_2\wedge v_3$ as the determinant of the matrix with column vectors $v_1$, $v_2$ and $v_3$.

\begin{remark}\label{rmk: combinatorics triple and double} Note that the triple and double ratios are  invariant under the action of the projective general linear group $\mathsf{PGL}_3(\mathbb R)$. 
On the other hand, Fock--Goncharov proved that two triples of flags in general position are in the same $\mathsf{PGL}_3(\mathbb R)$-orbit if and only if they have the same triple ratios. 
Similarly, two quadruples of flags $(E,F,G,H)$ and $(E',F',G',H')$ in general position are in the same $\mathsf{PGL}_3(\mathbb R)$-orbit if and only if they have the same two double ratios, $T(E,F,G)=T(E',F',G')$, and $T(E,G,H)=T(E',G',H')$.

The triple ratio is invariant under cyclic permutation of the flags. The double ratios satisfy the equalities
\begin{gather*}
D_a(E,F,G,H)\cdot D_a(E,H,G,F)=1\text{ for }a=1\text{ and }2,\\ D_{1}(E,F,G,H)\cdot D_{2}(G,F,E,H)=D_{2}(E,F,G,H)\cdot D_{1}(G,F,E,H)=1
\end{gather*}
The asymmetry in the roles of the flags $E, F, G$ and $H$ can be visualized by a choice of an oriented triangulation of a quadrilateral whose vertices are labeled by $E$,  $F$,  $G$ and $H$ (in this counter-clockwise order), and the unique edge of this oriented triangulation goes from $E$ to $G$. If we flip the diagonal from $E$ to $G$ to the diagonal from $F$ to $H$, the change of coordinates is
\begin{gather*}
T(F,E,H)=t' \ \frac{1+d'+d't+d'td}{1+d+dt'+dt'd'},\qquad T(F,H,G)=t \ \frac{1+d+dt'+dt'd'}{1+d'+d't+d'td},\\
D_1(F,E,H,G)=\frac{1+d'}{td'(1+d)}, \qquad D_2(F,E,H,G)=\frac{1+d}{t'd(1+d')},
\end{gather*}
see for example \cite[Section 2.5]{FockGoncharov_ModuliSpacesConvexProjectiveStructuresSurfaces}.

\end{remark}

\begin{definition}[Positivity]\ 
\begin{itemize}
	\item A triple of flags $(E,F,G)$ in general position is {\em positive} if $T(E,F,G)>0$. 

	\item A quadruple of flags $(E,F,G,H)$ in general position is {\em positive} if $(E,F,G)$ and $(E,G,H)$ are positive and $D_a(E,F,G,H)>0$ for $a=1$ and $2$. 
\end{itemize}
\end{definition}

More generally, a $k$-tuple $\mathbf{F}=(F_1,\dots, F_k)$ of flags is in {\em general position} if for all $a_1,\dots,a_k \in \{0,1,2,3\}$ such that $a_1+\dots +a_k=3$, we have
\[
F_1^{a_1}+\dots+F_k^{a_k}=\mathbb R^3.
\]
The $k$-tuple is then {\em positive} if for all $1 \leq i_1<i_2<i_3\leq k$, the triple of flags $(F_{i_1},F_{i_2},F_{i_3})$ is positive,  and for all $1 \leq i_1<i_2<i_3<i_4\leq k$ the quadruple $(F_{i_1},F_{i_2},F_{i_3},F_{i_4})$ is positive.

Fock and Goncharov \cite{FockGoncharov_ModuliSpacesLocalSystemsHigherTeichmuellerTheory} showed, in greater generality, that this notion of positivity for tuples of flags in general position coincides with the one defined by Lusztig via the theory of total positivity in general split Lie groups \cite{Lusztig_TotalPosRedGroups}.

\subsection{Convex polygons from positive tuple of flags}
\label{sec:ConvexPolygonsPositiveTuples}
In \cite{FockGoncharov_ModuliSpacesConvexProjectiveStructuresSurfaces} the notion of positivity is illustrated geometrically as follows. 

Any complete flag $F$ determines uniquely a point $F^1$ in the real projective plane $\mathbb{RP}^2$,  and a projective line $F^2$ in $\mathbb{RP}^2$ passing through $F^1$. 
A triple of flags $(E,F,G)$ is positive if and only if a triangle\footnote{Beware that a triangle in $\mathbb{RP}^2$ (by which we mean a subset that in some affine chart is a bounded triangle) is neither determined by its vertices nor by the projective lines bounding it.} with vertices $E^1$, $F^1$, and $G^1$  is inscribed in a triangle with vertices $E^2\cap F^2$, $F^2\cap G^2$, and $G^2\cap E^2$, see \cite[Lemma 2.3]{FockGoncharov_ModuliSpacesConvexProjectiveStructuresSurfaces}.

For quadruples we have a similar result,  see \cite[Lemma 2.4]{FockGoncharov_ModuliSpacesConvexProjectiveStructuresSurfaces}.
A quadruple $(E,F,G,H)$ is positive if and only if the quadrilateral with vertices $E^1$, $F^1$, $G^1$ and $H^1$ (in this cyclic order) is inscribed in the quadrilateral with vertices $E^2\cap F^2$, $F^2\cap G^2$, $G^2\cap H^2$ and $H^2\cap E^2$ (in this cyclic order). 

More generally for any $k \geq 2$,  a positive $k$-tuple of flags $\mathbf F=(F_1,\ldots,F_k)$ corresponds to a pair of convex $k$-gons,  one inscribed in the other, and such that the vertices occur in the correct cyclic order, see \cite[Theorem 2.2]{FockGoncharov_ModuliSpacesConvexProjectiveStructuresSurfaces} and its proof.
The inner $k$-gon has vertices $F_1^1, \ldots,F_k^1$ (in this cyclic order) and the outer $k$-gon has vertices $F_i^2 \cap F_{i+1}^2$ for $i=1,\ldots,k$ modulo $k$ (in this cyclic order).
With this interpretation we can now define the following.

\begin{definition}
\label{def: polygon from flag}
Let $\mathbf F$ be a positive $k$-tuple of flags in $\mathbb{R}^3$. 
Then we denote by $P_\inn(\mathbf{F}) \subset P_{\out}(\mathbf {F})$ the convex $k$-gons in $\mathbb{RP}^2$ associated to $\mathbf F$.
\end{definition}

Because $\mathbf F$ is in general position, the vertices of $P_\inn(\mathbf{F})$ lie in the interior of the sides of $P_\out(\mathbf{F})$, and every side of $P_\out(\mathbf{F})$ contains exactly one vertex of $P_\inn(\mathbf{F})$. In particular,  $P_\inn(\mathbf{F})$ defines an \emph{ideal} polygon in $P_\out(\mathbf{F})$,  see Figure~\ref{fig:PosTuple}.
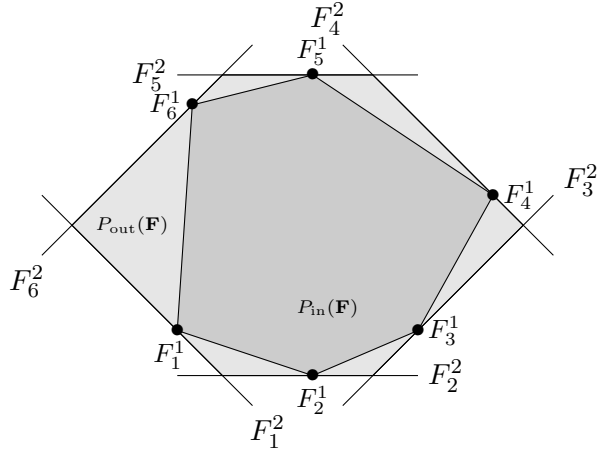
\begin{figure}[h]
\begin{tikzpicture}[scale=2]
	\filldraw[black!10] (0,0) -- (1,0) -- (2,1) -- (1,2) -- (0,2) -- (-1,1) -- cycle;
	\draw (0,0) -- (1,0) -- (2,1) -- (1,2) -- (0,2) -- (-1,1) -- cycle;
	\draw (-.3,0) -- (1.3,0);
	\draw (-.3,2) -- (1.3,2);
	\draw (-1.2,.8) -- (.2,2.2);
	\draw (-1.2,1.2) -- (.2,-.2);
	\draw (.8,-.2) -- (2.2,1.2);
	\draw (2.2,.8) -- (0.8,2.2);
	\draw (-.6,1) node{\tiny $P_{\out}(\mathbf{F})$};
	\filldraw[black!20] (.6,0)  -- (1.3,0.3) -- (1.8,1.2) -- (.6,2) -- (-.2,1.8) -- (-.3,.3) -- cycle;
	\draw (.6,0)  -- (1.3,0.3) -- (1.8,1.2) -- (.6,2) -- (-.2,1.8) -- (-.3,.3) -- cycle;
	\draw (.7,.45) node{\tiny $P_{\inn}(\mathbf{F})$};
	\draw (.3,-.2) node[below]{$F_1^2$};
	\draw (1.3,0) node[right]{$F_2^2$};
	\draw (2.2,1.3) node[right]{$F_3^2$};
	\draw (.7,2.2) node[above]{$F_4^2$};
	\draw (-.3,2) node[left]{$F_5^2 $};
	\draw (-1.3,.8) node[below]{$F_6^2$};
	\draw (.6,0) node{$\bullet$};
	\draw (1.3,0.3) node{$\bullet$};
	\draw (1.8,1.2) node{$\bullet$};
	\draw (.6,2) node{$\bullet$};
	\draw (-.2,1.8) node{$\bullet$};
	\draw (-.3,.3) node{$\bullet$};
	\draw (.6,0) node[below]{\small $F_2^1$};
	\draw (1.3,0.3) node[right]{\small $F_3^1$};
	\draw (1.8,1.2) node[right]{\small $F_4^1$};
	\draw (.6,2) node[above]{\small $F_5^1$};
	\draw (-.2,1.8) node[left]{\small $F_6^1$};
	\draw (-.35,.3) node[below]{\small $F_1^1$};
\end{tikzpicture}
\caption{A positive tuple of flags $\mathbf{F}$ gives rise to two inscribed polygons $P_\inn(\mathbf{F})\subset P_\out(\mathbf{F})$.}
\label{fig:PosTuple}
\end{figure}
\subsection{Hilbert distance}\label{sec: Hilbert prelim} Our estimates for the Holmes--Thompson area of inscribed polygons are largely motivated by the theory of (marked) convex real projective structures on surfaces. 
Such a geometric structure determines a properly convex domain in $\mathbb{RP}^2$. 
That is, an open subset $\Omega$ of the real projective plane whose closure is contained in an affine chart and such that $\Omega$ is convex in this affine chart.
 Given a positive tuple of flags $\mathbf F$, then the interior of $P_{\out}(\mathbf F)$ is an example of a properly convex domain in $\mathbb{RP}^2$.

The Hilbert distance on a properly convex domain $\Omega$ is defined as follows: for every $p\neq q$ contained in $\Omega$, the line connecting these two points intersects the boundary of $\Omega$ in distinct points $a$ and $b$. 
Then, up to relabeling, $a,p,q$, and $b$ appear in this order along a projective line segment in the closure of $\Omega$,  and we set
\[
d_{\Omega}(x,y) \coloneqq \frac{1}{2}\ln [q:p:a:b],
\]
where $[q:p:a:b]$ is the projective cross ratio with the convention $[t:1:0:\infty]=t$.

A properly convex domain $\Omega$ equipped with its Hilbert metric, is an example of a Finsler manifold,  i.e.\ a manifold $M$ endowed with a continuous function $\| \cdot \| \colon TM \to [0,\infty)$ which is smooth away from the zero section,  and in every tangent space restricts to a (not necessarily symmetric) norm.
As for Riemannian manifolds,  this allows to define a (not necessarily symmetric) metric on $M$.
In the case of a properly convex domain $\Omega$, seen as a subset of $\mathbb R^2$, we can define a continuous function
\begin{align*}
\|\cdot\|_\Omega\colon T\Omega \to[0,\infty),  \quad (p,\xi)\mapsto \frac{d}{dt}\Big\vert_{t=0}d_{\Omega}(p,p+t\xi),
\end{align*}
that in every tangent space restricts to a symmetric norm,  and the associated metric is the Hilbert metric.
We can compute the norm $\|\cdot\|_{\Omega}$ explicitly as in \cite[Section 3.2.1]{Benoist_ConvexesDivisiblesI}. 

Specifically, identify $\Omega$ with a subset of $\mathbb R^2$ and consider $v=(p,\xi)\in\Omega\times\mathbb R^2\cong T\Omega$ a tangent vector. 
Let $v^{\pm}$ denote the forward (respectively, backward) intersection points on the boundary of $\Omega$ with the oriented line defined by $v$. Then, there exist positive scalars $\sigma^+(v)$ and $\sigma^-(v)$ such that
\[
\xi=\sigma^+(v)(v^+-p)=\sigma^-(v)(p-v^-),
\]
and the Finsler norm of $v$ equals $\|v\|_\Omega=\frac{1}{2}\left(\sigma^+(v)+\sigma^-(v)\right)$.

\subsection{Holmes--Thompson area}
\label{sec:HTArea}

The Holmes--Thompson volume for Finsler manifolds was first introduced by Holmes--Thompson in \cite{HolmesThompson_NDimAreaContentMinkowskiSpaces}.
It is based on the observation that the cotangent bundle of an $n$-manifold carries a natural symplectic form,  and its $n$-th exterior power is a volume form. For an introduction to volumes on Finsler manifolds we recommend \cite{AlvarezThompson_VolumesNormedFinslerSpaces}.

\begin{definition} The Holmes--Thompson volume of a measurable set $X$ in a Finsler manifold $(M,\|\cdot\|)$ is
\[\vol^\HT(X)\coloneqq\frac{1}{\epsilon_n} \int_{B^*X} \frac 1{n!} \omega ^n,\]
where
$\omega$ is the standard symplectic form on the cotangent bundle $T^*M$ of $M$ and $B^*X=\{(p,\xi)\in T^*M \mid \|\xi\|_p^*\leq 1\}$ is the dual unit ball.
Here $\epsilon_n$ denotes the volume of the $n$-dimensional unit ball.
\end{definition}

The normalization by ${\epsilon_n}$ in the definition of the Holmes--Thompson volume is so that whenever $\|\cdot\|$ comes from a scalar product, in other words, $M$ is Riemannian,  the Holmes--Thompson volume agrees with the Riemannian volume.
This is in general not standardized throughout the literature.

\begin{remark}
Note that for Finsler manifolds,  there is not one canonical volume.
Even though the Holmes--Thompson volume does not agree with the Hausdorff measure of the associated distance (the Busemann volume \cite{Busemann_GeometryFinslerSpaces}), it is natural in many ways, e.g.\ totally geodesic submanifolds are minimal with respect to the Holmes--Thompson area integrand, it satisfies a Crofton formula, etc.
For an interesting exposition of the advantages of the Holmes--Thompson volume compared to other notions of volume we refer to \cite{AlvarezBerck_WhatIsWrongHausdorffMeasureFinslerSpaces}.
\end{remark}

\begin{remark}
\label{rem: volume bilipschitz} Since we ask any reasonable notion of volume on a normed vector space to be translation invariant,  the volume on each tangent space needs to be a multiple of the Lebesgue measure,  see e.g.\  \cite[Section 3]{AlvarezThompson_VolumesNormedFinslerSpaces}.
In particular,  for properly convex domains all notions of volume are bilipschitz equivalent \cite[Proposition 9.4]{Marquis_around},  and asymptotic results do not depend on the notion of volume chosen.
\end{remark}

For our exposition it suffices to compute the Holmes--Thompson volume for subsets of a properly convex domain $\Omega$ of $\R^2$,  that is endowed with its Hilbert metric.
In this case the tangent and cotangent spaces at each point $x\in \Omega$ can both be identified with $\R^2$.
Furthermore,  the scaling factor $\epsilon_2$ equals $\pi$, the area of the unit disc.

\begin{remark}                                                                                                                                                                                                                                                                                                                                                                                                                                                                                                                      The Holmes--Thompson volume of a Borel subset $X\subset \Omega \subset \R^2$ satisfies
\[ \vol_\Omega^\HT(X) = \frac{1}{\pi}|B^*X| = \frac{1}{\pi} \int_X |B^*_p| \,\mathrm d\vol_2(p),\]
where for each point $p\in \Omega$, the set $ B^*_p \subseteq \mathbb R^n$ is the dual unit ball of $(\|\cdot\|_\Omega)_x$, 
$ | \cdot | $ denotes the usual volume of a subset in $\R^2$ and $ \mathrm d \vol_2$ is the standard $2$-dimensional volume.
\end{remark}

\begin{remark}
\label{rem: volume invariant}
 Given a Borel subset $X\subset \Omega\subset \mathbb{RP}^2$ and $g\in\mathsf{PGL}_3(\mathbb R)$, then
\[
\vol_\Omega^\HT(X)=\vol_{g\Omega}^\HT(gX).
\]
\end{remark}

\begin{remark}
\label{rem: volume decreasing}
If $\Omega \subseteq \Omega' \subset \mathbb{RP}^2$ are two properly convex domains,  and $X \subseteq \Omega$,  then
\[ \vol_\Omega^\HT(X) \geq \vol_{\Omega'}^\HT(X).\]
To see this, note that the dual unit ball in $T_p^*\Omega'$ is contained in the dual unit ball $T_p^*\Omega$.
\end{remark}

Since we will not be considering any other notions of volume we write $\vol_\Omega$ instead of $\vol_\Omega^\HT$ in the following.
Furthermore,  we will refer to it as area,  as we stay in the setting of two-dimensional manifolds.

\begin{definition}\label{def: HT area flags}
We define the \emph{Holmes--Thompson area of a positive $k$-tuple of flags $\mathbf F$} in $\R^3$ by
\[  \vol(\mathbf F) \coloneqq \vol_{P_\out(\mathbf F)}(P_\inn(\mathbf F)),\]
where $P_\inn(\mathbf F) \subset P_\out(\mathbf F)$ are the inscribed convex $k$-gons in $\mathbb{RP}^2$ defined by $\mathbf F$ as in Definition~\ref{def: polygon from flag}.
\end{definition}

To be precise,  we compute the volume of the \emph{interior} of $P_\inn(\mathbf F)$ with respect to the Hilbert metric on the \emph{interior} of $P_\out(\mathbf F)$. However, to simplify the exposition we use the same notation for open polygons.

\subsection{Unit tangent balls for the Hilbert distance}
\label{subsection: Unit tangent balls}
We have seen that in order to compute the Holmes--Thompson volume of a subset $X\subseteq \Omega$ for $\Omega \subset \mathbb{RP}^2$ an open properly convex domain endowed with its Hilbert metric,  we need to understand the unit tangent balls and their duals.
There is an important relationship between the shape of $\Omega$ and the unit tangent ball at a point $p \in \Omega$.
For this we need to first introduce a definition due to Papadopoulos--Troyanov.

\begin{definition}[{\cite[Definitions 6.4 and 6.5]{PapadopoulosTroyanov_HarmonicSymmetrizationConvexSets}}]
Let $[a_1,a_2] \subset \mathbb{R}^n$ be a compact segment with $a_1 \neq a_2$,  and $p \in [a_1,a_2]$.
The \emph{harmonic symmetrization of $[a_1,a_2]$ centered at $p$},  written $\mathcal{H}([a_1,a_2],p)$,  is the segment $[b_1,b_2] \subset \mathbb{R}^n$,  where $b_1, b_2 \in \mathbb{R}^n$ satisfy
\begin{itemize}
\item $p$ is the center of $[b_1,b_2]$,
\item $\frac{1}{|p-b_1|}=\frac{1}{2}\left(\frac{1}{|p-a_1|}+\frac{1}{|p-a_2|}\right)$ (with the convention $\frac{1}{0}=\infty$),  and
\item $a_2-a_1 \in \mathbb{R}_{>0} (b_2-b_1)$.
\end{itemize}
The harmonic symmetrization of an open bounded segment $(a_1,a_2) \subset \mathbb{R}^n$ centered at $p$ is then defined as 
\[ \mathcal{H}((a_1,a_2),p) \coloneqq \mathcal{H}([a_1,a_2],p) \cap (a_1,a_2).\]

If $\Omega \subset \mathbb{R}^n$ is a convex and bounded set and $p$ in the interior of $\Omega$,  then the harmonic symmetrization of $\Omega$ centered at $p$ is the set $\mathcal{H}(\Omega,p)$ obtained by replacing each segment of $\Omega$ through $p$ by its harmonic symmetrization centered at $p$.
\end{definition}

The harmonic symmetrization commutes with translation in the sense that $\mathcal{H}(\Omega,p)-p=\mathcal{H}(\Omega-p,o)$,  where $o$ is the origin in $\R^n$. The following theorem (which is in fact not restricted to dimension two) motivates introducing the harmonic symmetrization.

\begin{theorem}[{\cite[Proposition 9.2]{PapadopoulosTroyanov_HarmonicSymmetrizationConvexSets}}]
\label{theorem: Unit Tangent Ball is Harmonic Symmetrization}
Let $\Omega \subset \mathbb{R}^2 \subset \mathbb{RP}^2$ be a properly convex domain endowed with its Hilbert metric.
Then for all $p \in \Omega$ the unit ball $B_p \subset T_p\Omega \cong \mathbb{R}^2$ is equal to the harmonic symmetrization $\mathcal{H}(\Omega-p,o)$ of $\Omega-p$ at the origin $o$. 
\end{theorem}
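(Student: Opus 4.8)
The plan is to compute the unit ball $B_p$ directly from the explicit description of the Hilbert--Finsler norm recalled above (following Benoist), and to check that its radial function in every direction agrees with that of $\mathcal{H}(\Omega-p,o)$. Since harmonic symmetrization commutes with translation, $\mathcal{H}(\Omega,p)-p=\mathcal{H}(\Omega-p,o)$, and since the Hilbert norm is invariant under the translation carrying $p$ to the origin, one has $(\|\cdot\|_\Omega)_p=(\|\cdot\|_{\Omega-p})_o$. I therefore first translate so that $p=o$ and work throughout with the convex body $\Omega-p$.

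Fix a unit vector $u\in\mathbb{R}^2$ and consider the tangent vector $v=(o,u)$. The oriented line through $o$ in the direction $u$ meets $\partial(\Omega-p)$ at a forward point $v^+=t^+u$ and a backward point $v^-=-t^-u$ for unique $t^+,t^->0$; these are exactly the endpoints of the chord $(a_1,a_2)=(-t^-u,\,t^+u)$ of $\Omega-p$ through $o$. Writing $u=\sigma^+(v)(v^+-o)=\sigma^-(v)(o-v^-)$ as in Benoist's formula gives $\sigma^+(v)=1/t^+$ and $\sigma^-(v)=1/t^-$, hence
\[
\|u\|_\Omega=\tfrac{1}{2}\!\left(\tfrac{1}{t^+}+\tfrac{1}{t^-}\right).
\]

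Next I read off the unit ball. By homogeneity of the norm, the ray $\mathbb{R}_{>0}u$ meets $\partial B_p$ at the single vector $r\,u$ with $r=\|u\|_\Omega^{-1}$, i.e.\ $\tfrac{1}{r}=\tfrac{1}{2}(\tfrac{1}{t^+}+\tfrac{1}{t^-})$. By the definition of harmonic symmetrization, the chord $(-t^-u,\,t^+u)$ of $\Omega-p$ is replaced by the centered segment $(-r\,u,\,r\,u)$ with exactly this value of $r$; and since the norm is symmetric, $\|-u\|_\Omega=\|u\|_\Omega$, so $\partial B_p$ meets $\mathbb{R}_{<0}u$ at $-r\,u$, matching the other endpoint. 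Thus $B_p$ and $\mathcal{H}(\Omega-p,o)$ have the same intersection with every line through $o$. As both are convex sets with $o$ in their interior, each is determined by its radial function, whence $B_p=\mathcal{H}(\Omega-p,o)$; undoing the translation yields the claim.

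The computation is elementary once Benoist's formula is available; the only delicate points are bookkeeping ones. First, one must match the oriented-line description underlying $\sigma^\pm$ with the unoriented-chord description underlying harmonic symmetrization: the forward and backward boundary points $v^\pm$ are precisely the endpoints $a_2,a_1$ of the chord, and the harmonic mean appearing in $\|u\|_\Omega$ is by design the harmonic-symmetrization radius. Second, one should record that the symmetry of the Hilbert norm makes $B_p$ centrally symmetric, consistent with the centered segment $[b_1,b_2]$ produced by $\mathcal{H}(\cdot,o)$. I do not expect a genuinely hard step: the proof is a direction-by-direction identification of two radial functions, and the same argument works verbatim in every dimension, matching the ambient generality of the cited Papadopoulos--Troyanov result.
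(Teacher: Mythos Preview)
The paper does not prove this theorem; it is quoted as \cite[Proposition 9.2]{PapadopoulosTroyanov_HarmonicSymmetrizationConvexSets} and used as a black box. Your argument is correct and is essentially the standard one: Benoist's formula $\|v\|_\Omega=\tfrac{1}{2}(\sigma^+(v)+\sigma^-(v))$ says precisely that the radial function of $B_p$ in direction $u$ is the harmonic mean of the distances from $p$ to the two boundary points along that line, which is the defining radius of the harmonic symmetrization. The direction-by-direction identification, together with central symmetry of the Hilbert norm, then gives $B_p=\mathcal{H}(\Omega-p,o)$.

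One small caveat worth recording: with the paper's convention for the harmonic symmetrization of an \emph{open} segment, $\mathcal{H}((a_1,a_2),p)=\mathcal{H}([a_1,a_2],p)\cap(a_1,a_2)$, the resulting set need not be symmetric when $p$ is off-center (the centered segment $[-b,b]$ can overshoot the shorter side). Your proof actually identifies $B_p$ with the \emph{closed} harmonic symmetrization $\mathcal{H}([a_1,a_2],0)$ chord-by-chord, which is what one wants; the open/closed discrepancy is a harmless convention issue (affecting only boundary points), not a flaw in your reasoning.
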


In special cases,  the harmonic symmetrization can be explicitly computed.
This is for example the case for polygons,  which are of particular interest in our exposition.
More generally,  it is claimed in \cite[Proposition 6.7~(7)]{PapadopoulosTroyanov_HarmonicSymmetrizationConvexSets} that the harmonic symmetrization of a polyhedron is again a polyhedron (in any dimension).
Since we could not find a proof of this fact in the literature we prove a slightly stronger version in the two-dimensional case.

We first have the following observation,  relating the harmonic symmetrization of a compact segment in $\mathbb{R}$ about $0$ to harmonic cross ratios.

\begin{lemma}
\label{lemma: Harmonic symmetrization and cross-ratio}
Let $[a_1,a_2] \subset \mathbb{R} \subset \mathbb{RP}^1$ be a compact subset containing $0$.
Then the harmonic symmetrization of $[a_1,a_2]$ with respect to $0$ is equal to $[-b,b]$, where $b>0$ is such that $[0:b:a_2:-a_1]=-1$,  with the following definition of the cross ratio $[a:b:c:d]=\frac{c-a}{c-b}\cdot \frac{d-b}{d-a}$.
\end{lemma}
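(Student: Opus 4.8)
The plan is to unwind the definition of harmonic symmetrization directly and then verify the cross-ratio identity by a short computation. First I would apply the three defining conditions of $\mathcal{H}([a_1,a_2],0)$ with center $p=0$. The centering condition forces the output segment to be symmetric about the origin, and the orientation condition $a_2-a_1\in\mathbb{R}_{>0}(b_2-b_1)$ (together with $a_1\le 0\le a_2$, so that $a_2-a_1>0$) forces $b_2>b_1$; hence the symmetrized segment is $[-b,b]$ with $b=b_2>0$ and $b_1=-b$. The remaining condition then reads $\frac{1}{b}=\frac{1}{2}\bigl(\frac{1}{|a_1|}+\frac{1}{|a_2|}\bigr)$, which identifies $b$ as the harmonic mean of $|a_1|$ and $|a_2|$.

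Next I would make the sign bookkeeping explicit. Since $0\in[a_1,a_2]$ we have $a_1\le 0\le a_2$, so $|a_1|=-a_1$ and $|a_2|=a_2$; solving the harmonic-mean equation then yields the closed form $b=\frac{-2a_1a_2}{a_2-a_1}=\frac{2|a_1||a_2|}{|a_1|+|a_2|}>0$. I would record here that the whole content of the lemma is the (classical) fact that the harmonic mean is exactly the value producing a harmonic quadruple, which is the origin of the terminology.

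Finally I would substitute this value into the cross-ratio with the convention $[a:b:c:d]=\frac{c-a}{c-b}\cdot\frac{d-b}{d-a}$. Writing $s=-a_1\ge 0$ and $t=a_2\ge 0$, so that $b=\frac{2st}{s+t}$, a direct computation gives $t-b=\frac{t(t-s)}{s+t}$ and $s-b=\frac{s(s-t)}{s+t}$, whence
\[
[0:b:a_2:-a_1]=\frac{t}{t-b}\cdot\frac{s-b}{s}=\frac{s+t}{t-s}\cdot\frac{s-t}{s+t}=-1,
\]
as claimed. There is no genuine obstacle in this argument; the only points that require care are the bookkeeping of absolute values (handled uniformly by $0\in[a_1,a_2]$) and the degenerate endpoints $a_1=0$ or $a_2=0$, where the convention $\frac{1}{0}=\infty$ collapses $[-b,b]$ to $\{0\}$ and the statement holds trivially.
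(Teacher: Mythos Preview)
Your proof is correct and follows essentially the same route as the paper's: both verify directly that the harmonic-mean condition defining $\mathcal{H}([a_1,a_2],0)$ is algebraically equivalent to the cross-ratio condition $[0:b:a_2:-a_1]=-1$. The only cosmetic difference is direction---the paper starts from the cross-ratio equation and manipulates it into the harmonic-mean form without solving for $b$, whereas you first solve for $b=\frac{2|a_1||a_2|}{|a_1|+|a_2|}$ and then substitute into the cross-ratio---but this is the same computation read forwards versus backwards.
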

\begin{proof}
The first and third condition in the definition of the harmonic symmetrization are trivially satisfied.
For the second condition we compute
\[[0:b:a_2:-a_1]=\frac{a_2}{a_2-b}\cdot\frac{-a_1-b}{-a_1}=\frac{1}{1-\frac{b}{a_2}}\cdot \left(1+\frac{b}{a_1}\right).\]
Setting this cross ratio equal to $-1$,  and recalling that $a_1<0$ and $b,a_2>0$,  gives
\[2=\frac{b}{a_2}-\frac{b}{a_1}=|b|\left(\frac{1}{|a_1|}+\frac{1}{|a_2|}\right),\]
which is equivalent to the second condition in the definition of the harmonic symmetrization.
\end{proof}

\begin{lemma}
\label{lemma: Harmonic Symmetrization of Polygon}
Let $\ell$ and $\ell'$ be two distinct lines through the origin $o$ in $\R^2$. 
Let $L_1$ a line that intersects $\ell$ and $\ell'$ in the lower half-plane,  and $s_1$ the segment on $L_1$ with endpoints the intersection points.
Similarly, let $L_2$ be a line that intersects $\ell$ and $\ell'$ in the upper half-plane,  and $s_2$ the segment on $L_2$ with endpoints the intersection points.

Then, there exists segments $r_1$ and $r_2$ with endpoints on $\ell$ and $\ell'$,  such that the harmonic symmetrization centered at $o$ of every segment passing through $o$ with an endpoint in $s_1$ and the other in $s_2$ has an endpoint in $r_1$ and the other in $r_2$.
\end{lemma}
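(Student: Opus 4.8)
The plan is to reduce everything to locating one endpoint of each symmetrized segment. Since the harmonic symmetrization $\mathcal{H}([a_1,a_2],o)$ is \emph{centered at $o$}, its two endpoints are of the form $b_2$ and $b_1=-b_2$; so it suffices to pin down the forward endpoint $b_2$ and then set $r_1\coloneqq -r_2$. The two half-plane hypotheses play a concrete role here: they guarantee that $o\notin L_1\cup L_2$, and that on \emph{every} chord through $o$ meeting both $s_1$ and $s_2$, the point $o$ lies strictly between $a_1\in s_1$ and $a_2\in s_2$. This is exactly what makes the symmetrization formula applicable, with $a_1$ and $a_2$ on opposite rays from $o$.

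The computational heart is a single functional identity. Because $o\notin L_i$, I would write each line as a normalized affine functional, $L_1=\{x:\langle n_1,x\rangle=1\}$ and $L_2=\{x:\langle n_2,x\rangle=1\}$, with $\langle\cdot,\cdot\rangle$ the standard inner product; since $L_1\neq L_2$ we have $n_1\neq n_2$. For a chord through $o$ let $\hat u$ be its forward unit direction (pointing into the upper half-plane), so that $a_2=|a_2|\hat u\in s_2$ and $a_1=-|a_1|\hat u\in s_1$. Evaluating the functionals gives $|a_2|=\langle n_2,\hat u\rangle^{-1}$ and $|a_1|=-\langle n_1,\hat u\rangle^{-1}$, whence the half-length $\beta$ of the symmetrized segment satisfies
\[
\frac{1}{\beta}=\frac12\left(\frac{1}{|a_1|}+\frac{1}{|a_2|}\right)=\frac12\,\langle n_2-n_1,\hat u\rangle .
\]

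The key observation is then immediate: the forward endpoint $b_2=\beta\hat u$ satisfies $\langle n_2-n_1,b_2\rangle=\beta\cdot\tfrac{2}{\beta}=2$, \emph{independently of the chord}. Hence every such $b_2$ lies on the fixed affine line $M\coloneqq\{x:\langle n_2-n_1,x\rangle=2\}$, which is a genuine line (as $n_1\neq n_2$) missing $o$. As the chord rotates, $\hat u$ sweeps continuously between the directions of $\ell$ and $\ell'$, so $b_2=\beta\hat u$ traces a connected subarc of $M$, and I would take $r_2\coloneqq[M\cap\ell,\,M\cap\ell']$; this contains every $b_2$. The two endpoints of $r_2$ are realized by the extreme chords, which are $\ell$ and $\ell'$ themselves (the chord through $o$ and $s_1\cap\ell$ \emph{is} $\ell$, exiting through $s_2\cap\ell$). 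Finally $r_1\coloneqq -r_2$ collects the endpoints $b_1=-b_2$, and since $\ell,\ell'$ pass through $o$ its endpoints $-M\cap\ell,\,-M\cap\ell'$ again lie on $\ell$ and $\ell'$.

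The arithmetic is short, so the main work is bookkeeping the geometry. The point needing the most care is the nondegeneracy ensuring $M$ actually meets both $\ell$ and $\ell'$: one must check $\langle n_2-n_1,\hat u\rangle\neq0$ for the directions of $\ell,\ell'$, which follows from the displayed formula because the corresponding distances $|a_1|,|a_2|$ are finite and positive precisely because $o\notin L_1\cup L_2$ lies strictly between $s_1$ and $s_2$. One should also confirm the orientation convention (third bullet of the definition of $\mathcal{H}$) places $b_2$, not $b_1$, along $\hat u$; this holds since $a_2-a_1=(|a_1|+|a_2|)\hat u$ and $b_2-b_1=2\beta\hat u$ are positive multiples of one another.
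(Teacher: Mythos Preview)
Your proof is correct, and it takes a genuinely different route from the paper's. The paper argues projectively: it reflects $s_1$ to $-s_1$, lets $q=(-L_1)\cap L_2$, draws the line $L_0$ through $o$ and $q$, and defines $L$ as the harmonic conjugate of $L_0$ in the pencil through $q$ with respect to $L_2$ and $-L_1$ (i.e.\ $[L_0:L:L_2:-L_1]=-1$). Invariance of the cross-ratio of four concurrent lines under intersection with any transversal $\ell_s$ then gives $[o:x:x_2:x_1]=-1$ for the intersection points, and the previous Lemma identifies $x$ as an endpoint of the symmetrization; so $r_2=[L\cap\ell,\,L\cap\ell']$.

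Your argument replaces this by a short affine computation: writing $L_i=\{\langle n_i,\cdot\rangle=1\}$, the half-length identity $1/\beta=\tfrac12\langle n_2-n_1,\hat u\rangle$ forces every forward endpoint $b_2=\beta\hat u$ to lie on the fixed line $M=\{\langle n_2-n_1,\cdot\rangle=2\}$. This is entirely self-contained (you do not need the cross-ratio Lemma), and it is pleasant to check that your $M$ coincides with the paper's $L$: the point $q=(-L_1)\cap L_2$ satisfies $\langle n_1,q\rangle=-1$ and $\langle n_2,q\rangle=1$, hence $\langle n_2-n_1,q\rangle=2$, so $q\in M$. What the paper's approach buys is a conceptual explanation of the word ``harmonic'' and a clean generalization to the projective setting; what yours buys is brevity and the avoidance of any auxiliary lemma. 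Both yield $r_2=[M\cap\ell,\,M\cap\ell']$ and $r_1=-r_2$.
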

\begin{proof}
Consider the segments $-s_1$ and $s_2$ in $\mathbb{R}^2$.
Then $-L_1$ is the line containing $-s_1$.
The lines $-L_1$ and $L_2$ (now seen as projective lines in $\mathbb{RP}^2$) intersect in a unique point $q \in \mathbb{RP}^2$,  where we identify $\mathbb{R}^2$ with a fixed affine chart in $\mathbb{RP}^2$.
Consider the projective line $L_0$ passing through $q$ and $o$.
The harmonic condition 
\[[L_0:L:L_2:-L_1]=-1\]
uniquely defines a projective line $L$ passing through $q$ (as the cross ratio is an invariant of four projective lines passing through a common point).
Now $\ell$ (viewed as a projective line in $\mathbb{RP}^2$) intersects $L$ in a unique point $a$.
Similarly,  $\ell'$ intersects $L$ in a unique point $a'$.
Let $r_2 \coloneqq [a,a'] \subset L$,  and $r_1 \coloneqq -r_2$.
We claim that these are the desired segments.

Let now $s$ be another segment passing through $o$ with endpoints in $s_1$ and $s_2$.
Then the projective line $\ell_s$ spanned by $s$ intersects $L_0$,  $L_1$,  $L_2$ and $L$.
By the property of the cross ratio,  the respective intersection points $o,x,x_2,x_1$ satisfy $[o:x:x_2:x_1]=-1$.
Thus by Lemma~\ref{lemma: Harmonic symmetrization and cross-ratio},  the point $x=\ell_s \cap L$ is one of the endpoints of the harmonic symmetrization of $s$ (and $-x$ is the other one by symmetry).
Now $x \in r_2$ (and then also $-x \in -r_2=r_1$),  because $s$ has endpoints in $s_1$ and $s_2$,  so $\ell_s$  lies in between $\ell$ and $\ell'$.
\end{proof}

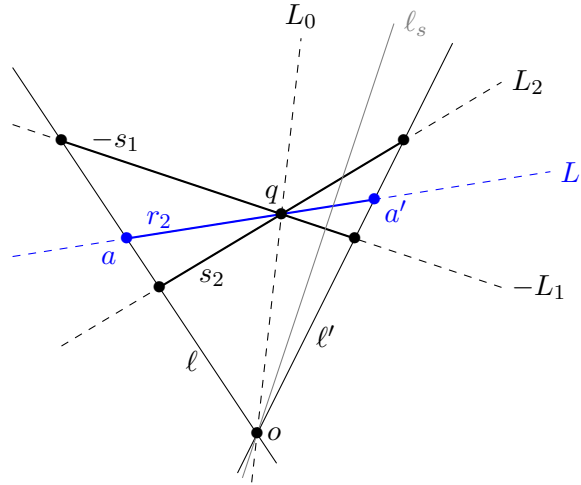
\begin{figure}[h]
\begin{tikzpicture}[scale=1.3]
	\draw (-.2,-.4) -- (2,4); 
	\draw[right] (.5,1) node{$\ell'$};
	\draw[thick] (1.5,3) -- (-1,1.5); 
	\draw[right] (-.7,1.6) node{$s_2$};
	\draw[dashed] (-2,9/10) -- (5/2,36/10); 
	\draw[right] (5/2,36/10) node{$L_2$};
	\draw (-2.5,3.75) --(0.2,-0.3); 
	\draw[left] (-.5,.75) node{$\ell$};
	\draw[thick] (-2,3) -- (1,2); 
	\draw[dashed] (-5/2,19/6) -- (5/2,3/2); 
	\draw[right] (5/2,3/2) node{$-L_1$};
	\draw[right] (-1.8,3) node{$-s_1$};
	\draw[dashed] (-.055,-.055*9)--(2.7/6,2.7/6*9); 
	\draw[above] (2.7/6,2.7/6*9) node{$L_0$};
	\draw[blue, thick] (-4/3,2) --  (6/5,12/5); 
	\draw[blue, above] (-1,2) node{$r_2$};
	\draw[blue, dashed] (-5/2,69/38) --  (3,51/19); 
	\draw[right, blue] (3,51/19) node{$L$};
	\draw (.25,9/4) node{$\bullet$}; 
	\draw[above] (.15,9/4) node{$q$}; 
	\draw (1.5,3) node{$\bullet$};
	\draw (-1,1.5) node{$\bullet$};
	\draw (1,2) node{$\bullet$};
	\draw (-2,3) node{$\bullet$};
	\draw[blue] (-4/3,2) node{$\bullet$};
	\draw[blue,left] (-4/3,1.8) node{$a$};
	\draw[blue] (6/5,12/5) node{$\bullet$};
	\draw[blue,right] (6/5,11.4/5) node{$a'$};
	
	\draw[gray] (-.15,-.45)--(1.4,4.2);
	\draw[gray,right] (1.4,4.2) node{$\ell_s$};
	
	\draw (0,0) node{$\bullet$};
	\draw[right] (0,0) node{$o$};
\end{tikzpicture}
\caption{Visualizing the harmonic symmetrization and the proof of Lemma~\ref{lemma: Harmonic Symmetrization of Polygon}.}
\end{figure}

We can now prove the desired proposition.

\begin{proposition}
\label{proposition: Harmonic Symmetrization of Polygon}
Let $P$ be a $k$-polygon in $\mathbb{R}^2$ containing the origin $o$ in its interior.
Then the harmonic symmetrization of $P$ centered at $o$ is a $k'$-polygon with $k \leq k' \leq 2k$.

Furthermore,  the vertices of $\mathcal{H}(P,o)$ lie on the lines passing through $o$ and a vertex of $P$.
\end{proposition}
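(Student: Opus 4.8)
The plan is to understand $\mathcal{H}(P,o)$ edge by edge, using the key structural fact already established in Lemma~\ref{lemma: Harmonic Symmetrization of Polygon}: the harmonic symmetrization is controlled by pairs of lines through the origin. First I would set up the combinatorial framework. Label the vertices of $P$ cyclically as $w_1,\dots,w_k$ and the edges as $E_1,\dots,E_k$, and draw the $k$ rays $\ell_1,\dots,\ell_k$ from $o$ through the vertices $w_1,\dots,w_k$. These rays, together with their opposite rays, partition a neighborhood of $o$ into angular sectors. The crucial observation is that for a direction $\theta$ at $o$, the chord of $P$ through $o$ in direction $\theta$ has its two endpoints lying on two specific edges of $P$, and which pair of edges this is depends only on which angular sector $\theta$ lies in; the pairing changes exactly when $\theta$ crosses one of the rays $\pm\ell_i$.

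Next I would apply Lemma~\ref{lemma: Harmonic Symmetrization of Polygon} sectorwise. Fix one angular sector $\Sigma$ in which every chord through $o$ has one endpoint on a fixed edge $E_i$ and the other on a fixed edge $E_j$ (here the two bounding rays $\ell,\ell'$ of the lemma are the boundary rays of $\Sigma$, the segment $s_2$ is the part of $E_i$ cut out by $\Sigma$, and $-s_1$ is the part of $E_j$, matching the lemma's hypotheses after the antipodal identification). The lemma then produces a single segment $r$ lying on a projective line $L$, such that the harmonic symmetrization of every chord in this sector has its endpoints on $\pm r$. Hence over the sector $\Sigma$, the boundary of $\mathcal{H}(P,o)$ is the straight segment $r$ (and, by the centrally symmetric nature of harmonic symmetrization, $-r$ over the opposite sector). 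Consequently $\mathcal{H}(P,o)$ is bounded by finitely many line segments, so it is a polygon, and its edges are indexed by the angular sectors determined by the rays $\pm\ell_i$. The vertices of $\mathcal{H}(P,o)$ occur where consecutive such edges meet, which happens precisely on the rays $\ell_i$ (or $-\ell_i$), giving the "furthermore" clause: each vertex lies on a line through $o$ and a vertex of $P$.

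Finally I would count the edges to get the bound $k \le k' \le 2k$. The $k$ lines through $o$ and the vertices of $P$ create at most $2k$ rays emanating from $o$, hence at most $2k$ angular sectors, so $k' \le 2k$. For the lower bound, I would argue that distinct edges of $P$ give rise to distinct edges of $\mathcal{H}(P,o)$: harmonic symmetrization preserves the direction of each chord (the third defining condition $a_2-a_1 \in \mathbb{R}_{>0}(b_2-b_1)$), and because $o$ is interior each edge $E_i$ subtends a genuine angular sector at $o$, contributing at least one edge to the symmetrization; since the $k$ edges $E_i$ are distinct as directions this yields $k \le k'$. The main obstacle is the bookkeeping in this last step: I must verify that the segments $r$ produced for \emph{adjacent} sectors actually meet in a vertex lying on the appropriate ray $\pm\ell_i$ (rather than overlapping or being collinear, which would merge two edges and could in principle drop the count below $k$). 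This requires checking that as $\theta$ crosses a ray $\ell_i$, the supporting line $L$ of the symmetrized edge genuinely changes, which follows because on one side the relevant pair of edges of $P$ is $(E_{i-1},E_{i+m})$ and on the other it is $(E_i,E_{i+m})$ for some $m$, so the input data to the lemma jumps discontinuously and the output line $L$ must change accordingly.
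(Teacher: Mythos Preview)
Your approach is essentially the same as the paper's: the paper subdivides $\partial P$ by marking, in addition to the vertices $z_j$, the antipodal points $z_j'$ where the line through $o$ and $z_j$ meets $\partial P$ again, obtaining $k'$ marked points with $k\le k'\le 2k$, and then applies the lemma to each resulting boundary segment---which is exactly your angular-sector decomposition viewed on $\partial P$ rather than at $o$. Your worry about adjacent output segments being collinear (potentially dropping the count below $k$) is legitimate, and in fact the paper does not address it either: it simply counts the marked points and asserts the result, so on this point you are being more careful than the paper, not less.
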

\begin{proof}
Let us denote by $z_1,\ldots,z_k$ the vertices of $P$.
For each vertex $z_j$ consider the line passing through $z_j$ and $o$. (You can see the construction in Figure~\ref{fig:UnitBallQst} for the polygon $Q_\st-p$.)
Denote the other intersection point of this line with $\partial P$ by $z_j'$.
Now label all points $z_j$ and $z_j'$ by $y_1,\ldots,y_{k'}$ going in this cyclic order around $\partial P$.
Note that $k \leq k' \leq 2k$.

We claim that for all $j=1,\ldots,k'$ the segment $s_j$ between $y_j$ and $y_{j+1}$ (this is contained in the boundary $\partial P$) is mapped under the harmonic symmetrization centered at $o$ to a segment.
Since the union of the $s_j$ for all $j=1,\ldots,m$ (with the convention that $y_{m+1}=y_1$) covers $\partial P$,  we obtain that $\mathcal{H}(P,o)$ is a polygon.

The proof of this claim follows easily from Lemma~\ref{lemma: Harmonic Symmetrization of Polygon}.
Indeed, fix $j$ with $1\leq j \leq k'$.
Then $j$ determines four points among the $y_i$,  namely $y_j$, $y_{j+1}$,  and the intersection points of the rays starting from $y_j$ respectively $y_{j+1}$ passing through $o$.
These correspond to $y_m$ and $y_{m+1}$ respectively for some $m \neq j-1,j,j+1$.
By construction,  the respective segments $s_j$ and $s_m$ satisfy, up to rotation, the conditions of Lemma~\ref{lemma: Harmonic Symmetrization of Polygon}, which proves the claim.
Thus $\mathcal{H}(P,o)$ is a polygon.

The segments $s_j$ have, by definition, both of their endpoints on lines passing through $o$ and a vertex of $P$ (even more, at least one of its endpoints is in the vertex set of $P$).
By the construction in the proof of Lemma~\ref{lemma: Harmonic Symmetrization of Polygon}, the endpoints of $r_j$,  the image of $s_j$ under the harmonic symmetrization centered at $o$,  lie on the same lines.
This proves the second statement.
\end{proof}

\subsection{Duals of polygons}
\label{sec:DualPolyg}
In the following sections we need to compute Holmes--Thompson areas in the case when the properly convex domain $\Omega$ is a polygon.
In this case,  the unit tangent ball for the Hilbert distance at every point is a symmetric polygon, as we have seen in the last subsection. Therefore we need to be able to compute duals of polygons, which are again polygons.
There is an explicit formula relating the vertices of a convex symmetric polygon and its dual polygon.
We follow the algorithm described in \cite{LohmanMorrison_PolarsConvexPolygons}.

Let thus $P \subset \R^2$ be a symmetric polygon centered at $o=(0,0)$ with $2k$ vertices.
Label the vertices  of $P$ that lie above the horizontal axis or on the non-negative horizontal axis from left to right by $z_1, \ldots, z_k$.
We also set $z_0 \coloneqq -z_k$ and write
$z_i=(x_i,y_i)$ for all $i=0,\ldots,k$.

\begin{theorem}[\cite{LohmanMorrison_PolarsConvexPolygons}]
\label{thm:DualPolygon}
The dual polygon of the polygon with vertices $\pm z_i$ has vertices $\pm w_i$ for $i=1,\ldots,k$,  where
\[w_i = \frac{1}{x_{i-1}y_i-x_iy_{i-1}}\begin{pmatrix}
y_i-y_{i-1}\\
x_{i-1}-x_i
\end{pmatrix}.\]
\end{theorem}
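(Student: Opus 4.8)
We have a symmetric polygon $P$ centered at the origin with $2k$ vertices $\pm z_i$, where $z_i = (x_i, y_i)$. We want to compute its dual (polar) polygon. The dual of a convex body $P$ is $P^* = \{w : \langle w, z\rangle \leq 1 \text{ for all } z \in P\}$. For a polygon, the dual is again a polygon, and its vertices correspond to the edges of $P$ (via a duality between faces). So I need to find the vertices $\pm w_i$ of $P^*$ and verify the given formula.

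**Setting up the duality.** The key principle: the vertices of $P^*$ correspond to the edges (facets) of $P$. Each edge of $P$ lies on a supporting line $\{z : \langle w, z\rangle = 1\}$ for a unique $w$; that $w$ is a vertex of $P^*$. So the edge of $P$ from $z_{i-1}$ to $z_i$ determines the vertex $w_i$ of $P^*$ via the condition $\langle w_i, z_{i-1}\rangle = 1$ and $\langle w_i, z_i\rangle = 1$. The plan is to solve this $2\times 2$ linear system for $w_i = (u, v)$.

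**Carrying out the computation.** Writing the two equations $x_{i-1} u + y_{i-1} v = 1$ and $x_i u + y_i v = 1$, I would apply Cramer's rule. The coefficient determinant is $x_{i-1}y_i - x_i y_{i-1}$ (matching the denominator in the formula), and solving gives
\[
u = \frac{y_i - y_{i-1}}{x_{i-1}y_i - x_i y_{i-1}}, \qquad v = \frac{x_{i-1} - x_i}{x_{i-1}y_i - x_i y_{i-1}},
\]
which is exactly the claimed $w_i$. The symmetry $P = -P$ guarantees that the opposite edge, from $-z_{i-1}$ to $-z_i$, yields the dual vertex $-w_i$, so the vertices of $P^*$ come in antipodal pairs $\pm w_i$, consistent with $P^*$ being symmetric.

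**Main obstacle.** The routine algebra is easy; the real content is justifying \emph{why} the vertices of the dual polygon are precisely these $w_i$ and that there are no others. This requires the standard face-duality for polars: for a convex polygon containing $o$ in its interior, the vertices of $P^*$ are in bijection with the edges of $P$, each dual vertex being the unique functional that is constant equal to $1$ on the corresponding edge and $<1$ on the rest of $P$. I would invoke this established fact (as the cited reference \cite{LohmanMorrison_PolarsConvexPolygons} does) rather than reprove it, and then verify that the labeling of vertices $z_0, \ldots, z_k$ (with $z_0 = -z_k$) correctly enumerates all $k$ distinct edges in the upper-half region, so that $\pm w_i$ for $i = 1, \ldots, k$ exhausts the $2k$ vertices of $P^*$. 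The one point needing a line of care is confirming that the convexity and ordering of the $z_i$ ensure each candidate $w_i$ indeed satisfies $\langle w_i, z\rangle \le 1$ for all $z \in P$ (not merely on the one edge), so that it is a genuine vertex of the polar.
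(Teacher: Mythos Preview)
Your argument is correct and is the standard proof of polar duality for polygons. Note, however, that the paper does not actually prove this theorem: it is quoted from \cite{LohmanMorrison_PolarsConvexPolygons} and used as a black box, so there is no ``paper's own proof'' to compare against. Your derivation via Cramer's rule, together with the face--vertex duality for polars of polytopes containing the origin in their interior, is exactly the content of that reference.
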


\section{Holmes--Thompson area of a positive triple of flags}
\label{sec:HTTriple}
The goal of this section is the proof of the following proposition.

\begin{proposition}[Corollary~\ref{prop: intro triangle}]
\label{prop: VolumePositiveTriple}
Let $\mathbf F$ be a positive triple of flags in $\mathbb R^3$ with triple ratio $t \in (0,\infty)$.
Then
\[\vol(\mathbf{F})=\frac{3}{8\pi}(\pi^2+\ln^2(t)).\]
\end{proposition}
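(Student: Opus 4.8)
The plan is to compute the Holmes--Thompson area of a positive triple of flags by normalizing the outer triangle to the standard triangle $T_\st$ with vertices $(0,0)$, $(0,2)$, $(2,0)$, establishing the explicit formula for the area integrand stated in Proposition~\ref{prop:intro:integrand:triangle}, and then integrating. By Remark~\ref{rem: volume invariant}, the Holmes--Thompson area is a $\mathsf{PGL}_3(\R)$-invariant, and by Remark~\ref{rmk: combinatorics triple and double} a positive triple of flags with triple ratio $t$ is determined up to this action by $t$ alone. Thus I may fix the outer triangle to be $T_\st$ and encode the dependence on $t$ entirely through the position of the inscribed ideal triangle $P_\inn$, whose vertices lie one on each open side of $T_\st$.

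\emph{First} I would prove Proposition~\ref{prop:intro:integrand:triangle}. By Theorem~\ref{theorem: Unit Tangent Ball is Harmonic Symmetrization}, the unit tangent ball $B_{(x,y)}$ for the Hilbert metric at an interior point $(x,y)$ of $T_\st$ is the harmonic symmetrization $\mathcal H(T_\st-(x,y),o)$; by Proposition~\ref{proposition: Harmonic Symmetrization of Polygon} this is a symmetric polygon whose vertices lie on the lines through the origin and the (translated) vertices of $T_\st$. I would write down these vertices explicitly using Lemma~\ref{lemma: Harmonic symmetrization and cross-ratio} to compute each harmonic symmetrization coordinate, then apply Theorem~\ref{thm:DualPolygon} to pass to the dual polygon $B^*_{(x,y)}$, and finally compute its Euclidean area by the shoelace formula. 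The expected outcome is the stated value $A_{T_\st}(x,y)=\frac{3}{2xy(2-x-y)}$; the denominator is the product of the (signed) distances to the three sides of $T_\st$, which is the natural quantity to appear.

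\emph{Next}, to finish, I would invoke the identification of the integral. By Definition~\ref{def: HT area flags} and the integral formula in Proposition~\ref{prop:intro:integrand:quadrilateral}'s triangle analogue, one has $\vol(\mathbf F)=\frac{1}{\pi}\int_{P_\inn}A_{T_\st}(u)\,d\mu(u)$. Rather than integrate directly, I would use the cited fact that, by \cite{delaHarpe_OnHilbertsmetricforsimplices}, on a simplex the Holmes--Thompson area is a constant multiple of the $p$-area studied by Adeboye--Cooper, so the computation reduces to the integral evaluated in \cite[Lemma 3.3]{AdeboyeCooper_AreaConvexProjSurfacesFGCoord}. Matching the normalization constants and expressing the answer in terms of the triple ratio $t$ (using that the position of the inscribed triangle is governed by $\ln t$) then yields $\vol(\mathbf F)=\frac{3}{8\pi}(\pi^2+\ln^2 t)$.

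\emph{The hard part will be} the explicit harmonic-symmetrization computation in Proposition~\ref{prop:intro:integrand:triangle}: one must correctly track which translated vertex rays produce the $2k$ vertices of the symmetrized polygon, handle the case distinctions arising from the position of $(x,y)$ relative to the diagonals through $o$, and verify that the shoelace computation collapses to the clean closed form. Once the integrand is in hand, the reduction to the Adeboye--Cooper integral is essentially bookkeeping with the normalization constants, so the main mathematical content is concentrated in obtaining the correct area-density function.
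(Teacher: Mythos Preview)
Your proposal is correct and follows essentially the same route as the paper: normalize the outer triangle to $T_\st$, compute the integrand $A_{T_\st}(x,y)=\frac{3}{2xy(2-x-y)}$ by finding the unit ball (via the Finsler/harmonic-symmetrization description) and dualizing with Theorem~\ref{thm:DualPolygon}, then invoke de la Harpe's result that the Hilbert simplex is a normed space to conclude the Holmes--Thompson area is a scalar multiple of the Adeboye--Cooper $p$-area, and fix that scalar by a single evaluation (the paper does this at $t=1$). One minor remark: for the triangle there are in fact no case distinctions depending on the position of $(x,y)$---the unit ball is always a hexagon with the same combinatorics---so the computation you flag as the hard part is cleaner than you anticipate.
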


\begin{remark}
The result that $\vol({\bf F})$ grows like $\ln^2(t)$ follows already from the works of \cite[Proposition 0.3]{AdeboyeCooper_AreaConvexProjSurfacesFGCoord} (Adeboye and Cooper obtain $\frac{1}{8}(\pi^2 +\ln^2(t))$ for the $p$-area) or \cite[Proposition 3]{Wolpert_HilbertAreaInscribedTrianglesQuadrilaterals} (Wolpert shows that there exist positive constants $c_1$ and $c_2$ such that the Hausdorff measure of $T$ is bounded above and below by $c_1 (1+\ln^2(t))$ respectively $c_2 (1+\ln^2(t))$). Even though they compute a different notion of area, de la Harpe \cite{delaHarpe_OnHilbertsmetricforsimplices} showed that a simplex with its Hilbert metric has the structure of a normed vector space (see Remark~\ref{rem: volume bilipschitz}). Thus, all notions of area in question are scalar multiples of each other and we get the same asymptotic behavior.
\end{remark}

Before we prove the above proposition,  we do some preliminary considerations and normalizations.
We have seen in Section~\ref{sec:ConvexPolygonsPositiveTuples} how a positive triple of flags defines two triangles, one inscribed into the other. Up to the action of $\mathsf{PGL}_3(\bb R)$ we can assume that the outer triangle is the standard triangle $T_\st$ with vertices
\[
v_1=\begin{pmatrix}
0\\0
\end{pmatrix}\qquad
v_2=\begin{pmatrix}
2\\0
\end{pmatrix}\qquad
v_3=\begin{pmatrix}
0\\2
\end{pmatrix},
\]
and that the inner triangle $T$ has vertices 
\[
w_1=\begin{pmatrix}
1\\1
\end{pmatrix}\qquad
w_2=\begin{pmatrix}
1\\0
\end{pmatrix}\qquad
w_3=\begin{pmatrix}
0\\s
\end{pmatrix},
\]
where $s= \frac{2t}{t+1} \in (0,2)$ and $t$ is the triple ratio of $\mathbf F$.
Indeed, consider the flags (see Notation \ref{not: flag smatrix})
\begin{gather*}
E=\begin{bmatrix}1&2\\1&0\\1&1\end{bmatrix}\qquad F=\begin{bmatrix}1&0\\0&0\\1&1\end{bmatrix}\qquad
G=\begin{bmatrix}0&0\\s&0\\1&1\end{bmatrix}
\end{gather*}
in the $\PGL_3(\mathbb{R})$-orbit of $\mathbf F$. 
Using the formula in (\ref{eq:triple ratio}),
we find
\[
t\coloneqq T({\bf F})=T(E,F,G)=\frac{s}{2-s},  \textnormal{ or,  equivalently } s = \frac{2t}{t+1}.
\]

\begin{figure}[h]
\begin{tikzpicture}[scale=2]
	\draw[->] (-.5,0) -- (2.5,0);
	\draw[->] (0,-.5) -- (0,2.5);
	\filldraw[black!10] (0,0) -- (2,0) -- (0,2) -- cycle;
	\draw (0,0) -- (2,0) -- (0,2) -- cycle;
	\draw (.4,1.1) node{$T_\st$};
	\filldraw[black!20] (1,1) -- (1,0) -- (0,.3) -- cycle;
	\draw (1,1) -- (1,0) -- (0,.3) -- cycle;
	\draw (.7,.45) node{$T$};
	\draw (0,0) node{$\bullet$};
	\draw (2,0) node{$\bullet$};
	\draw (0,2) node{$\bullet$};
	\draw (1,1) node{$\bullet$};
	\draw (1,0) node{$\bullet$};
	\draw (0,0.3) node{$\bullet$};
	\draw[left] (0,-.25) node{\small{$\begin{pmatrix}0\\0\end{pmatrix}$}};
	\draw[below] (2,0) node{\small{$\begin{pmatrix}2\\0\end{pmatrix}$}};
	\draw[left] (0,2) node{\small{$\begin{pmatrix}0\\2\end{pmatrix}$}};
	\draw[below] (1,0) node{\small{$\begin{pmatrix}1\\0\end{pmatrix}$}};
	\draw[right] (1,1.1) node{\small{$\begin{pmatrix}1\\1\end{pmatrix}$}};
	\draw[left] (0,0.3) node{\small{$\begin{pmatrix}0\\s\end{pmatrix}$}};
\end{tikzpicture}
\caption{A positive triple of flags gives rise to two inscribed triangles $T \subset T_\st$.}
\end{figure}
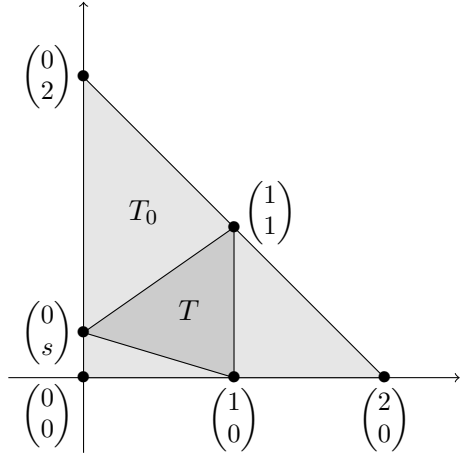

\subsection{Unit ball for the Hilbert metric of a triangle}
Recall from Section~\ref{subsection: Unit tangent balls} that for any $p=(x,y)\in T_\st$ the unit ball for the Hilbert metric around $p$ is a polygon with at most six vertices.
Moreover,  it follows from the proof of Theorem~\ref{theorem: Unit Tangent Ball is Harmonic Symmetrization},  Proposition~\ref{proposition: Harmonic Symmetrization of Polygon} and the translation invariance of the harmonic symmetrization,  that we can explicitly compute the vertices of the unit ball in the tangent space $T_p T_\st$ as follows. Let $z$ be one of the vertices of $T_\st$. 
Then, let $v_z=(p,\xi_z)$ be the unit tangent vector at $p$ in the direction of $z$. 
Denote the endpoints of the corresponding geodesic line by $v^{\pm}_z$ and note that $v_z^{+}=z$ by convention. 
Then, since $v_z$ has unit norm, there exists $\sigma_z\in\bb R$ such that 
\[
\xi_z=\sigma_z(v_z^+-p)=(2-\sigma_z)(p-v_z^-).
\]
With these notations, we can explicitly compute the vectors $\xi_z\in\bb R^2$ for $z$ a vertex of $T_\st$. 

For $z_1 = (0,0)$ we obtain the equality
\[
\sigma_{z_1}\left(-\begin{pmatrix}x\\y\end{pmatrix}\right)=(2-\sigma_{z_1})\left(\begin{pmatrix}x\\y \end{pmatrix}-\frac{2}{x+y}\begin{pmatrix}x\\y\end{pmatrix}\right),
\] 
from which we deduce
\[
\xi_{(0,0)}=(x+y-2)\begin{pmatrix}x\\y\end{pmatrix}.
\]
Similar computations give 
\[
\xi_{(2,0)}=\begin{pmatrix}x(2-x)\\-xy\end{pmatrix} \qquad\text{and}\qquad \xi_{(0,2)}=\begin{pmatrix}-xy\\y(2-y)\end{pmatrix}.
\]

\subsection{Dual unit ball for the Hilbert metric of a triangle}
\label{subsec:DualUnitBallTriangle}

We can now prove the following lemma.
\begin{lemma}[Proposition~\ref{prop:intro:integrand:triangle}]
\label{lemma: integrand triangle}
The area of the dual unit ball at a point $(x,y) \in T_\st$ is equal to
\begin{equation}\label{eq: area triangle}
A_{T_0}(x,y) \coloneqq \frac{3}{2xy(2-x-y)}.
\end{equation}
\end{lemma}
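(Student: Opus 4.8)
The plan is to combine the explicit description of the unit ball $B_p$ with the polar duality formula of Theorem~\ref{thm:DualPolygon}. By Theorem~\ref{theorem: Unit Tangent Ball is Harmonic Symmetrization} together with Proposition~\ref{proposition: Harmonic Symmetrization of Polygon}, the unit ball $B_p$ at $p=(x,y)$ is a centrally symmetric polygon whose vertices lie on the three lines through $p$ and the vertices of $T_\st$; since for interior $p$ these three lines are distinct, $B_p$ is a hexagon with vertices $\pm\xi_{(0,0)}$, $\pm\xi_{(2,0)}$, $\pm\xi_{(0,2)}$, using the three tangent vectors already computed above.

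The computational heart of the argument is the identity (writing $u\times v=u_1v_2-u_2v_1$ for the planar cross product)
\[
\xi_{(0,0)}\times\xi_{(2,0)}=\xi_{(2,0)}\times\xi_{(0,2)}=\xi_{(0,2)}\times\xi_{(0,0)}=2xy(2-x-y),
\]
which we abbreviate by $D$. Each equality is a one-line expansion of the formulas for $\xi_{(0,0)},\xi_{(2,0)},\xi_{(0,2)}$: in every case a factor of $xy$ or $(x+y-2)$ pulls out and the remaining bracket collapses to the constant $2$. This symmetry is what is responsible for the clean final answer, and I expect the routine expansions here to cause no difficulty.

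Next I would apply Theorem~\ref{thm:DualPolygon}. This step requires ordering the six vertices of $B_p$ by angle and selecting the three in the closed upper half-plane from left to right. Because $\xi_{(0,0)},\xi_{(2,0)},\xi_{(0,2)}$ point toward the three distinct vertices of $T_\st$, they are spread around the circle; checking a single point such as the centroid $p=(2/3,2/3)$ and arguing by continuity shows that a valid cyclic order for $\partial B_p$ is $\xi_{(0,0)},-\xi_{(0,2)},\xi_{(2,0)}$ followed by the three antipodes, so that the upper vertices feeding into the theorem are $-\xi_{(2,0)},\xi_{(0,2)},-\xi_{(0,0)}$. With this labeling every denominator $x_{i-1}y_i-x_iy_{i-1}$ equals $\pm D$ by the identity above, and the three dual vertices $w_1,w_2,w_3$ all come out proportional to $1/D$.

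Finally I would read off the Euclidean area of the dual hexagon $B_p^*$ from the shoelace formula: for a centrally symmetric hexagon with vertices $\pm w_1,\pm w_2,\pm w_3$ in cyclic order the area equals $|w_1\times w_2+w_2\times w_3+w_1\times w_3|$. A second application of the same collapsing phenomenon gives $w_1\times w_2=w_2\times w_3=w_1\times w_3=-1/D$, so the area is $3/D=\frac{3}{2xy(2-x-y)}$, as claimed. The only genuine obstacle I foresee is the bookkeeping in the third step---pinning down the correct cyclic order of the vertices and the signs of the denominators in Theorem~\ref{thm:DualPolygon}---since at every other stage the algebra collapses to constants thanks to the cross-product identity.
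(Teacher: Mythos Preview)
Your proposal is correct and follows essentially the same route as the paper: compute the hexagonal unit ball from the three tangent vectors, apply Theorem~\ref{thm:DualPolygon} with the ordering $-\xi_{(2,0)}\prec\xi_{(0,2)}\prec-\xi_{(0,0)}$, and finish with the shoelace formula. Your packaging via the identity $\xi_{(0,0)}\times\xi_{(2,0)}=\xi_{(2,0)}\times\xi_{(0,2)}=\xi_{(0,2)}\times\xi_{(0,0)}=2xy(2-x-y)$ is a clean way to organize the algebra (and, incidentally, already determines the cyclic order directly---no continuity argument is needed), whereas the paper simply writes out $w_1,w_2,w_3$ explicitly before applying the shoelace formula.
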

\begin{proof}
We have seen in the last section how to compute the vertices of the unit ball.
We would like to apply Theorem \ref{thm:DualPolygon} to the hexagon with vertices $\pm \xi_{(0,0)},\pm \xi_{(2,0)},  \pm \xi_{(0,2)}$.
The vertices $-\xi_{(0,0)}$, $\xi_{(0,2)}$,  and $-\xi_{(2,0)}$ lie above the horizontal axis.
Denote by $\prec$ the order on the first coordinates of vectors in $\bb R^2$.
Then 
\[
-\xi_{(2,0)}\prec \xi_{(0,2)} \prec -\xi_{(0,0)}.
\] 
Thus applying Theorem \ref{thm:DualPolygon}, we obtain that the dual polygon has vertices
\[ \pm w_1 = \pm\frac{1}{2(x+y-2)} \begin{pmatrix}
\tfrac{2-y}{x} \\ 1
\end{pmatrix}, \ \pm w_2 = \pm \begin{pmatrix}
-\tfrac{1}{2x}\\ \tfrac{1}{2y}
\end{pmatrix}, \ \pm w_3 = \pm  \frac{1}{2(x+y-2)}\begin{pmatrix}
-1\\ \tfrac{x-2}{y}
\end{pmatrix}.
\]
Note that all denominators are non-zero as $p=(x,y)$ lies in the interior of $T_\st$; in particular we have $x,y >0$ and $x + y <2$.

To compute the Euclidean area of the dual hexagon we start by noticing that 
\[
-w_1=\frac{1}{2(2-x-y)}\begin{pmatrix}\tfrac{2-y}{x}\\1\end{pmatrix}, \qquad w_3=\frac{1}{2(2-x-y)}\begin{pmatrix}1\\\tfrac{2-x}{y}\end{pmatrix},\qquad w_2=\begin{pmatrix}-\tfrac{1}{2x}\\\tfrac{1}{2y}.
\end{pmatrix}
\] 
are all in the upper half plane since $x,y>0$ and $x+y<2$. Moreover, $w_2\prec w_3\prec -w_1$. Using the fact that the hexagon with vertices $\pm w_1,\pm w_2,$ and $\pm w_3$ is symmetric about the origin, we apply the shoelace formula to obtain
\begin{align*}
A_{T_0}(x,y)&=\det(-w_1, w_3)+\det(w_3, w_2)+\det(w_2, w_1),
\end{align*}
where $\det(v,w)$ denotes the determinant of the 2-by-2 matrix with columns the vectors $v$ and $w$. 
Thus the area of the dual unit ball is equal to $\frac{3}{2xy(2-x-y)}$ as desired.
\end{proof}

\subsection{Holmes--Thompson area of $T$} Note that up until now we have not required that the point $p$ lies in the inscribed triangle $T$.
We will have to do so when we compute 
\[
\vol(\bf{F}) = \vol_{T_\st}^\HT(T) = \frac{1}{\pi}\int_{T}A_{T_0}(x,y)d(x,y).
\]
Since $T_0$ equipped with its Hilbert metric is a normed vector space by \cite{delaHarpe_OnHilbertsmetricforsimplices}, the Holmes--Thompson area of $T$ differs from its $p$-area by a positive scalar independent on the parameter $t$ (and, equivalently, $s$). Thus, applying \cite{AdeboyeCooper_AreaConvexProjSurfacesFGCoord}, there exists $\mu>0$ such that for every $t\in(0,\infty)$
\[
\vol(\bf{F})=\frac{\mu}{8}\left(\pi^2+\ln^2(t)\right).
\]
We compute $\mu$ in the case $t=s=1$. Then, $p=(x,y) \in T$ if and only if
\begin{gather*}
0<x<1,  \quad -x + 1 < y <1.
\end{gather*}
We are now ready to prove Proposition~\ref{prop: VolumePositiveTriple}.
\begin{proof}[Proof of Proposition~\ref{prop: VolumePositiveTriple}]
We compute explicitly
\begin{align*}
\frac{2}{3}\int_{T}A_{T_0}(x,y)d(x,y)
&= \int_0^1 \int_{-x+1}^{1} \frac{1}{xy(2-x-y)} dy dx= \int_0^1 \int_{-x+1}^{1} \frac{1}{x(2-x)} \left(\frac{1}{y}+\frac{1}{2-x-y} \right) dy dx\\
&=-2\int_0^1  \frac{\ln(1-x)}{x(2-x)} dx=-2\int_0^1 \frac{\ln(u)}{1-u^2}du=-\int_0^1\frac{\ln(u)}{1-u}du-\int_0^1\frac{\ln(u)}{1+u}du=\frac{\pi^2}{4}.
\end{align*}
This computation shows that the Holmes--Thompson area of ${\bf F}$ when $t=1$ equals $\tfrac{3\pi}{8}$ while its $p$-area is $\tfrac{\pi^2}{8}$.
Thus, $\mu=\tfrac{3}{\pi}$ and  $\vol({\bf F}) = \frac{3}{8\pi} \left( \pi^2 + \ln^2(t) \right)$.
\end{proof}

\section{Holmes--Thompson area of a positive quadruple of flags}
\label{sec:HTQuadruple}

In this section we establish a formula to compute the Holmes--Thompson area of a positive quadruple of flags $\mathbf F$ in $\mathbb R^3$ (see Definition \ref{def: HT area flags}) in terms of its positive triple and double ratios. Up to the action of $\PGL_3(\mathbb R)$, we can normalize so that the outer quadrilateral of $\mathbf F$ is the standard quadrilateral $Q_0\subset \mathbb R^2$ with vertices 
\[
z_1=\begin{pmatrix}
1\\1
\end{pmatrix}\qquad
z_2=\begin{pmatrix}
-1\\1
\end{pmatrix}\qquad
z_3=\begin{pmatrix}
-1\\-1
\end{pmatrix}\qquad
z_4=\begin{pmatrix}
1\\-1
\end{pmatrix}.
\] 
The following Proposition, proven in Sections \ref{sec: unit ball quad} and \ref{sec: dual unit ball quad}, is the key technical contribution of this section.

\begin{proposition}[Proposition~\ref{prop:intro:integrand:quadrilateral}]
\label{prop: quad area}
The area of the dual unit ball at a point $(x,y)\in Q_0\setminus\{y=\pm x\}$ is equal to 
\[
A_{Q_\st}(x,y)\coloneqq\frac{2+\max\{|x|,|y|\}}{2(1-x^2)(1-y^2)}.
\]
\end{proposition}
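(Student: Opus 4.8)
The plan is to follow the strategy already used for the triangle (Lemma~\ref{lemma: integrand triangle}): for a fixed interior point $p=(x,y) \in Q_\st$, first compute the vertices of the unit ball for the Hilbert metric in $T_pQ_\st$ via the harmonic symmetrization description (Theorem~\ref{theorem: Unit Tangent Ball is Harmonic Symmetrization} and Proposition~\ref{proposition: Harmonic Symmetrization of Polygon}), then apply the duality formula of Theorem~\ref{thm:DualPolygon} to obtain the vertices of the dual unit ball, and finally compute its Euclidean area by the shoelace formula. The key new feature compared to the triangle case is that $Q_\st$ has four vertices rather than three, so by Proposition~\ref{proposition: Harmonic Symmetrization of Polygon} the unit ball is a symmetric polygon with between $4$ and $8$ vertices, and the exact count depends on which region of $Q_\st$ the point $p$ lies in. This is exactly why the formula involves $\max\{|x|,|y|\}$ and why the statement excludes the diagonals $\{y=\pm x\}$.

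First I would compute, for each of the four vertices $z_1,\dots,z_4$ of $Q_\st$, the tangent vector $\xi_{z_i}$ pointing from $p$ toward $z_i$, normalized to have unit Hilbert norm, exactly as in the triangle computation: writing $\xi_{z}=\sigma_z(z-p)$ where $\sigma_z$ is determined by the harmonic symmetrization condition, i.e.\ by the requirement that the Finsler norm of $\xi_z$ equals $1$. Concretely, for the oriented line through $p$ in the direction of $z_i$ one finds the two boundary intersection points $v_z^{\pm}$ with $\partial Q_\st$ and uses $\|v\|_{Q_\st}=\tfrac12(\sigma^+(v)+\sigma^-(v))$ from Section~\ref{sec: Hilbert prelim}. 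The outputs will be four explicit vectors in $\R^2$ with denominators that are products of the linear forms $1\pm x$, $1\pm y$ defining the edges of $Q_\st$. In addition to the four vertex directions, I expect the harmonic symmetrization to create up to four extra vertices (the "$z_j'$" of Proposition~\ref{proposition: Harmonic Symmetrization of Polygon}), coming from the second intersection of each vertex-ray with $\partial Q_\st$; these are the points where the unit-ball polygon picks up extra edges.

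The main obstacle, and the reason the condition $\max\{|x|,|y|\}$ appears, is the combinatorial bookkeeping: to apply Theorem~\ref{thm:DualPolygon} one must list the vertices of the unit-ball polygon in correct angular (left-to-right in the upper half-plane) order, and this ordering depends on the position of $p$. By the dihedral symmetry of $Q_\st$ (it is invariant under the sign changes $x\mapsto -x$, $y\mapsto -y$ and the swap $x\leftrightarrow y$), it suffices to treat a single fundamental region, say $\{0\le y \le x\}$ so that $\max\{|x|,|y|\}=|x|=x$, and then invoke symmetry to cover all of $Q_\st$ away from the diagonals. On the diagonals two of the constructed vertices coincide (the polygon degenerates from one combinatorial type to another), which is precisely why those points are excluded from the statement; since they form a measure-zero set this does not affect the area integral in Proposition~\ref{prop:intro:integrand:quadrilateral}.

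Having fixed the vertex list for $p$ in the fundamental region, I would plug the ordered vertices into the formula $w_i = \frac{1}{x_{i-1}y_i-x_iy_{i-1}}\bigl(y_i-y_{i-1},\,x_{i-1}-x_i\bigr)^{\!\top}$ of Theorem~\ref{thm:DualPolygon} to obtain the dual vertices $\pm w_i$, and then compute the area of the dual polygon using the shoelace formula $\sum_i \det(w_{i},w_{i+1})$ for the symmetric polygon, as in the triangle proof. The expected answer $\frac{2+x}{2(1-x^2)(1-y^2)}$ on the region $x=\max\{|x|,|y|\}$ should emerge after simplification; replacing $x$ by $\max\{|x|,|y|\}$ and verifying invariance under the symmetries then yields the stated formula on all of $Q_\st\setminus\{y=\pm x\}$. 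I anticipate the algebra to be somewhat lengthy but routine once the correct cyclic ordering of the (up to eight) unit-ball vertices is pinned down; identifying that ordering correctly in the fundamental region is the real crux of the argument.
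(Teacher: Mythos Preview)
Your plan is correct and matches the paper's approach almost exactly: compute the unit tangent vectors $\xi_{z_i}$ toward the four corners of $Q_\st$ using the harmonic symmetrization description, feed the resulting symmetric octagon into Theorem~\ref{thm:DualPolygon}, and finish with the shoelace formula. The only difference is organizational: the paper carries out all four cases $|y|<x$, $|x|<y$, $x<-|y|$, $y<-|x|$ explicitly, whereas you propose to do a single fundamental region and transport the answer by the dihedral symmetry of $Q_\st$. Either route works; your symmetry reduction is slightly more economical. One small clarification to your description: the ``extra'' vertices coming from the $z_j'$ are exactly the $-\xi_{z_j}$ (by the central symmetry of the harmonic symmetrization), so off the diagonals the unit ball is always a genuine octagon with vertices $\pm\xi_{z_1},\dots,\pm\xi_{z_4}$---there is no further case distinction on the number of vertices, only on their cyclic ordering.
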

As illustrated in Figure~\ref{fig:PosQuadruple}, after normalizing the outer quadrilateral of $\mathbf F$ so that it equals $Q_0$, there exist $\alpha_1$, $\alpha_2$, $\beta_1$, and $\beta_2$ in $(-1,1)$ such that the inner quadrilateral $Q$ has vertices
\[
w_1=\begin{pmatrix}
\beta_1\\1
\end{pmatrix}\qquad w_2=\begin{pmatrix}
-1\\\alpha_1
\end{pmatrix}\qquad
w_3=\begin{pmatrix}
\beta_2\\-1
\end{pmatrix}\qquad
w_4=\begin{pmatrix}
1\\\alpha_2
\end{pmatrix}.
\]
Proposition~\ref{prop: quad area} gives a way to compute $\vol({\bf F})$ in terms of the Fock--Goncharov parameters of ${\bf F}$.

\tikzmath{\x = .5; \y =.1; \z = .2;}
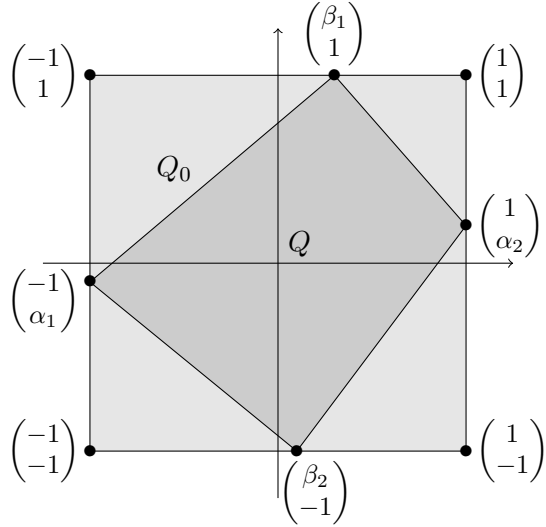
\begin{figure}[h]
\begin{tikzpicture}[scale=2.5]
\filldraw[fill=black!10] (-1,-1) -- (1,-1) -- (1,1) -- (-1,1)-- cycle;
\filldraw[fill=black!20] (1,.2) -- (.3,1) -- (-1,-.1) -- (.1, -1) --cycle;
\draw[->] (-1.25,0) -- (1.25,0);
\draw[->] (0,-1.25) -- (0,1.25);
\draw[right] (0,.1) node{$Q$};
\draw[right] (-.7,.5) node{$Q_\st$};
\draw (1,1) node{$\bullet$};
\draw (1,1) node[right]{\small $\begin{pmatrix}1\\1\end{pmatrix}$};
\draw (1,-1) node{$\bullet$};
\draw (1,-1) node[right]{\small $\begin{pmatrix}1\\-1\end{pmatrix}$};
\draw (-1,-1) node{$\bullet$};
\draw (-1,-1) node[left]{\small $\begin{pmatrix}-1\\-1\end{pmatrix}$};
\draw (-1,1) node{$\bullet$};
\draw (-1,1) node[left]{\small $\begin{pmatrix}-1\\1\end{pmatrix}$};
\draw (1,.2) node{$\bullet$};
\draw (1,.2) node[right]{\small $\begin{pmatrix} 1\\ \alpha_2 \end{pmatrix}$};
\draw (.3,1) node{$\bullet$};
\draw (.3,1) node[above]{\small $\begin{pmatrix} \beta_1\\1 \end{pmatrix}$};
\draw (-1,-.1) node{$\bullet$};
\draw (-1,-.2) node[left]{\small $\begin{pmatrix} -1\\ \alpha_1 \end{pmatrix}$};
\draw (.1,-1) node{$\bullet$};
\draw (.2,-1) node[below]{\small $\begin{pmatrix}\beta_2\\-1\end{pmatrix}$};
\end{tikzpicture}
	\caption{A positive quadruple of flags gives rise to two inscribed quadrilaterals $Q \subset Q_\st$.}
		\label{fig:PosQuadruple}
\end{figure}

\begin{corollary}
\label{prop: VolumePositiveQuadruple}
Let ${\bf F}=(E,F,G,H)$ be a positive quadruple of flags in $\mathbb R^3$ with triple ratios $t=T(E,F,G)$,  $t'=T(E,G,H)$ and double ratios $d=D_1(E,F,G,H)$,  $d'=D_2(E,F,G,H)$.
Then 
\[
\vol({\bf F})=\frac{1}{\pi}\int_{Q}A_{Q_0}(x,y)dxdy,
\]
where $Q$ is the convex quadrilateral inscribed in the standard quadrilateral $Q_0$ with vertices 
\[
\begin{pmatrix}
\beta_1\\1
\end{pmatrix},\quad
\begin{pmatrix}
-1\\\alpha_1
\end{pmatrix},\quad
\begin{pmatrix}
\beta_2\\-1
\end{pmatrix},\quad
\begin{pmatrix}
1\\\alpha_2
\end{pmatrix}.
\]
satisfying
\begin{gather}\label{eq: ai bi via triple and double}
\begin{split}
\alpha_1= \frac{d(1-t)\cdot \frac{1+t'}{1+t}+t'-dd'}{d(1+t)\cdot \frac{1+t'}{1+t}+t'+dd'}
\qquad 
\alpha_2=\frac{d'(1-t')\cdot \frac{1+t}{1+t'}+t-dd'}{d'(1+t')\cdot \frac{1+t}{1+t'}+t+dd'}, \\
\beta_1=-\frac{t'-d\cdot \frac{1+t'}{1+t}}{t'+d\cdot \frac{1+t'}{1+t}},\qquad
\beta_2=\frac{t-d'\cdot \frac{1+t}{1+t'}}{t+d'\cdot \frac{1+t}{1+t'}}.\qquad\qquad\quad
\end{split}
\end{gather}
\end{corollary}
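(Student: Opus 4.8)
The statement bundles together two essentially independent claims: the integral formula for $\vol(\mathbf F)$, and the explicit description~\eqref{eq: ai bi via triple and double} of the inner quadrilateral $Q$. The plan is to obtain the first by assembling the definitions, and to treat the second as an explicit flag computation followed by an algebraic inversion.

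For the integral formula, I would start from Definition~\ref{def: HT area flags}, which gives $\vol(\mathbf F)=\vol_{P_\out(\mathbf F)}(P_\inn(\mathbf F))$. The four vertices of $P_\out(\mathbf F)$ are in general position, hence form a projective frame, so there is a unique $g\in\PGL_3(\mathbb R)$ carrying them to the four vertices of $Q_\st$ in matching cyclic order; such a $g$ maps $P_\out(\mathbf F)$ to $Q_\st$ and $P_\inn(\mathbf F)$ to an inscribed quadrilateral $Q$ with one vertex on each side of $Q_\st$. By the projective invariance of the Holmes--Thompson area (Remark~\ref{rem: volume invariant}) this reduces the problem to computing $\vol_{Q_\st}(Q)$, and the claimed formula is then immediate from Proposition~\ref{prop: quad area} together with the integral expression for $\vol_\Omega$ recorded in Section~\ref{sec:HTArea}; note that the exceptional set $\{y=\pm x\}$ on which $A_{Q_\st}$ is undefined is Lebesgue-null and so does not affect the integral.

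For the vertex formulas, the key structural observation is that $(\alpha_1,\alpha_2,\beta_1,\beta_2)$ parametrize a slice for the $\PGL_3(\mathbb R)$-action: fixing the four sides of $Q_\st$ leaves only the trivial stabilizer (again because four general-position points form a frame), so every $\PGL_3(\mathbb R)$-orbit of positive quadruples meets this slice in exactly one point. I would then write an explicit representative of the orbit lying in the slice, namely flags $E,F,G,H$ with first vectors $e_1=(\beta_1,1,1)$, $f_1=(-1,\alpha_1,1)$, $g_1=(\beta_2,-1,1)$, $h_1=(1,\alpha_2,1)$ and second vectors $e_2=g_2=(1,0,0)$, $f_2=h_2=(0,1,0)$, chosen so that $E^2,F^2,G^2,H^2$ are the top, left, bottom and right edges of $Q_\st$ and $E^1,F^1,G^1,H^1$ equal $w_1,w_2,w_3,w_4$ respectively. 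Substituting these into~\eqref{eq:triple ratio} and into the definitions of $D_1,D_2$ expresses $t,t',d,d'$ as explicit rational functions of the four parameters; for example, a short computation of the six relevant $3\times 3$ determinants gives $t=\tfrac{(1-\alpha_1)(1+\beta_2)}{(1+\alpha_1)(1+\beta_1)}$, and the remaining invariants follow in the same way. Because the slice is a genuine fundamental domain, it suffices to \emph{verify} that the proposed values~\eqref{eq: ai bi via triple and double} reproduce $t,t',d,d'$, rather than to invert the system from scratch.

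The first part is bookkeeping; the genuine obstacle is the algebra in the second. The two triple ratios each involve only three of the parameters, but the double ratios involve all four simultaneously, so the system does not decouple cleanly and the closed forms in~\eqref{eq: ai bi via triple and double} are correspondingly intricate. I expect the cleanest route is to compute all four invariants as rational functions, clear denominators, and then either solve the resulting $4\times 4$ system or substitute~\eqref{eq: ai bi via triple and double} and check the four identities directly; in either case the main effort is the careful simplification of rational expressions, which is routine but lengthy.
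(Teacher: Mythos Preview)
Your proposal is correct and follows essentially the same route as the paper: normalize via $\PGL_3(\mathbb R)$, apply Proposition~\ref{prop: quad area}, write explicit flag representatives (the paper uses the same $e_1,f_1,g_1,h_1$ and equivalent choices of $e_2,f_2,g_2,h_2$), compute the four invariants, and invert. The one refinement you miss is that after computing $t,t'$ the paper rewrites the double ratios as $d=\tfrac{1+\beta_1}{1-\beta_1}\cdot t'\cdot\tfrac{1+t}{1+t'}$ and $d'=\tfrac{1-\beta_2}{1+\beta_2}\cdot t\cdot\tfrac{1+t'}{1+t}$, each depending on only one of the unknowns $\beta_1,\beta_2$; this decouples the system completely and makes the inversion immediate rather than the lengthy simplification you anticipate.
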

\begin{proof}
The first assertion follows from the definition of Holmes--Thompson area and Proposition~\ref{prop: quad area}. 
We complete the proof of the corollary by relating the parameters $\alpha_1$, $\alpha_2$, $\beta_1$, and $\beta_2$ to the Fock--Goncharov invariants of ${\bf F}$. 
Explicitly, we are using an element of $\mathsf{PGL}_3(\mathbb R)$ to send the quadruple ${\bf F}$ to the quadruple $(E',F',G',H')$ defined by (see Notation~\ref{not: flag smatrix})
\begin{gather*}
E'=\begin{bmatrix}\beta_1&1\\1&1\\1&1\end{bmatrix}=\begin{bmatrix}\beta_1&1-\beta_1\\1&0\\1&0\end{bmatrix}\qquad F'=\begin{bmatrix}-1&-1\\\alpha_1&-1\\1&1\end{bmatrix}=\begin{bmatrix}-1&0\\\alpha_1&1+\alpha_1\\1&0\end{bmatrix}\\
G'=\begin{bmatrix}\beta_2&-1\\-1&-1\\1&1\end{bmatrix}=\begin{bmatrix}\beta_2&1+\beta_2\\-1&0\\1&0\end{bmatrix}  \qquad H'=\begin{bmatrix}1&1\\\alpha_2&1\\1&1\end{bmatrix}=\begin{bmatrix}1&0\\\alpha_2&1-\alpha_2\\1&0\end{bmatrix}.
\end{gather*}
Then, using the definitions of the triple and double ratios as in Section~\ref{sec:PosTriplesQuadruples}, we find
\begin{gather*}
t=T(E',F',G')=\frac{1-\alpha_1}{1+\alpha_1}\cdot\frac{1+\beta_2}{1+\beta_1},  \qquad t' = T(E', G', H')=\frac{1+\alpha_2}{1-\alpha_2}\cdot\frac{1-\beta_1}{1-\beta_2}, \\
d = D_1(E',F',G',H')=\frac{1+\alpha_2}{1+\alpha_1}\cdot\frac{(1+\alpha_1)(1+\beta_1)+(1-\alpha_1)(1+\beta_2)}{(1+\alpha_2)(1-\beta_1)+(1-\alpha_2)(1-\beta_2)}=\frac{1+\beta_1}{1-\beta_1}\cdot t'\cdot \frac{1+t}{1+t'},\\
   d'= D_2(E',F',G',H')=\frac{1-\alpha_1}{1-\alpha_2}\cdot\frac{(1+\alpha_2)(1-\beta_1)+(1-\alpha_2)(1-\beta_2)}{(1+\alpha_1)(1+\beta_1)+(1-\alpha_1)(1+\beta_2)}=\frac{1-\beta_2}{1+\beta_2}\cdot t\cdot \frac{1+t'}{1+t}.
\end{gather*}
The identities~(\ref{eq: ai bi via triple and double}) follow by solving for $\alpha_1$, $\alpha_2$, $\beta_1$, $\beta_2$ in terms of these double and triple ratios.
\end{proof}

\subsection{Unit ball for the Hilbert metric}\label{sec: unit ball quad}
We start our proof of Proposition~\ref{prop: quad area} by finding a formula for the vertices of the unit ball at a point $p=(x,y)$ in the $Q_0\setminus\{y=\pm x\}$. As in Section~\ref{subsec:DualUnitBallTriangle}, we explicitly compute the vertices of the unit ball in $T_pQ_\st$ for $p=(x,y)$ in the standard quadrilateral $Q_\st \setminus \{y=\pm x\} $.  
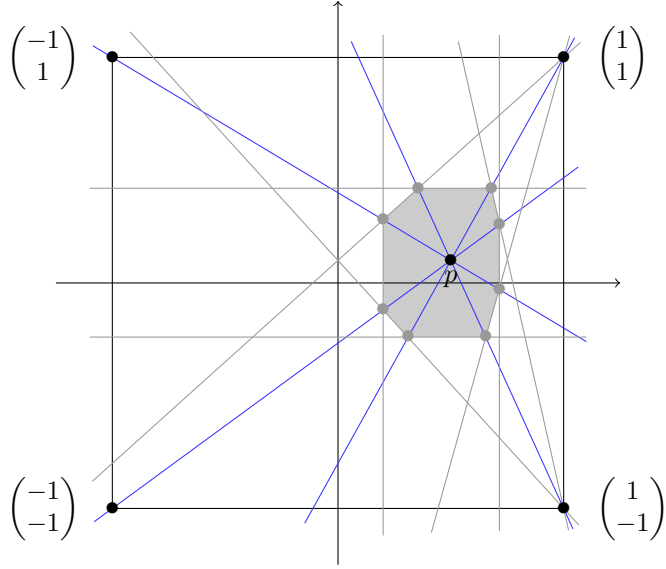
\begin{figure}[h]
\begin{tikzpicture}[scale=3]
\fill[black!20] (\z,{(\y+1)/(\x+1)*\z+(-1+(\y+1)/(\x+1))})--(\z,{(\y-1)/(\x+1)*\z+(1+(\y-1)/(\x+1))})-- (.355,.42)--(.68,.42)--(.715,.26)--(.715,-.03)--(.655,-.24)--(.31,-.24)--cycle;
\draw[->] (-1.25,0) -- (1.25,0);
\draw[->] (0,-1.25) -- (0,1.25);
\draw (-1,-1) -- (1,-1) -- (1,1) -- (-1,1)-- cycle;
\draw[blue!80,shorten >= -4cm,  shorten <=-0.30cm] (1,1) -- (\x,\y);
\draw[blue!80,shorten >= -3.2cm,  shorten <=-0.30cm] (1,-1) -- (\x,\y);
\draw[blue!80,shorten >= -2.1cm,  shorten <=-0.30cm] (-1,1) -- (\x,\y);
\draw[blue!80,shorten >= -2.1cm,  shorten <=-0.30cm] (-1,-1) -- (\x,\y);
\draw[black!40, shorten >= -5cm,  shorten <=-0.30cm] (1,-1) -- (\z,{(\y+1)/(\x+1)*\z+(-1+(\y+1)/(\x+1))});
\draw[black!40, shorten >= -3cm,  shorten <=-0.30cm] (\z,1)--(\z,{(\y+1)/(\x+1)*\z+(-1+(\y+1)/(\x+1)});
\draw[black!40, shorten >= -5.2cm,  shorten <=-0.30cm] (1,1)--(\z,{(\y-1)/(\x+1)*\z+(1+(\y-1)/(\x+1))}) ;
\draw[black!40, shorten >= -.3cm,  shorten <=-0.30cm] (1,-.24) -- (-1,-.24);
\draw[black!40, shorten >= -.3cm,  shorten <=-0.30cm] (1,.42) -- (-1,.42);
\draw[black!40, shorten >= -2cm,  shorten <=-0.30cm] (1,-1) -- (.68,.42);
\draw[black!40, shorten >= -2.7cm,  shorten <=-0.30cm] (1,1) -- (.655,-.24);
\draw[black!40, shorten >= -.3cm,  shorten <=-0.30cm] (.715, 1) -- (.715,-1);
\draw[black!40] (\z,{(\y+1)/(\x+1)*\z+(-1+(\y+1)/(\x+1))}) node{$\bullet$};
\draw[black!40] (\z,{(\y-1)/(\x+1)*\z+(1+(\y-1)/(\x+1))}) node{$\bullet$};
\draw[black!40] (.31,-.24) node{$\bullet$};
\draw[black!40] (.655,-.24) node{$\bullet$};
\draw[black!40] (.355,.42) node{$\bullet$};
\draw[black!40] (.68,.42) node{$\bullet$};
\draw[black!40] (.715,-.03) node{$\bullet$};
\draw[black!40] (.715,.26) node{$\bullet$};
\draw (\x,\y) node{$\bullet$};
\draw[below] (\x,\y) node{\small{$p$}};
\draw (1,1) node{$\bullet$};
\draw (1.1,1) node[right]{\small $\begin{pmatrix}
1\\1
\end{pmatrix}$};
\draw (1,-1) node{$\bullet$};
\draw (1.1,-1) node[right]{\small $\begin{pmatrix}
1\\-1
\end{pmatrix}$};
\draw (-1,-1) node{$\bullet$};
\draw (-1.1,-1) node[left]{\small $\begin{pmatrix}
-1\\-1
\end{pmatrix}$};
\draw (-1,1) node{$\bullet$};
\draw (-1.1,1) node[left]{\small $\begin{pmatrix}
-1\\1
\end{pmatrix}$};
\end{tikzpicture}
	\caption{Computation of the unit ball for the Hilbert metric in $Q_\st$ for a point $p$ in $Q_\st$.}\label{fig:UnitBallQst}
\end{figure}

We will have to subdivide our discussion into cases depending on the position of $p$ with respect to the lines $y=\pm x$. 
The first case is depicted in Figure~\ref{fig:UnitBallQst}.\\

\noindent\textbf{Case $\mathbf{|y|<x}$:} 
For $i=1,2,3,4$, let $v_{z_i}=(p,\xi_{z_i})$ be the unit tangent vector based at $p$ in the direction of $z_i$. For $z_i={\begin{pmatrix}1\\1\end{pmatrix}}$, there exists $\sigma_{z_1}\in\mathbb R$ such that
\[
\sigma_{z_1}\left(\begin{pmatrix}1\\1\end{pmatrix}-p\right)=(2-\sigma_{z_1})\left(p-\begin{pmatrix}\star\\-1\end{pmatrix}\right),
\] 
where the value of $\star$ is irrelevant to our purposes. From this we deduce that
\[
\xi_{z_1}=(1+y)\begin{pmatrix}1-x\\1-y\end{pmatrix}.
\]
Similar computations give
\[
\xi_{z_2}= (1-x)\begin{pmatrix}-(1+x)\\1-y\end{pmatrix}, \qquad 
\xi_{z_3}= -(1-x)\begin{pmatrix}1+x\\1+y\end{pmatrix}, \qquad 
\xi_{z_4}=-(1-y)\begin{pmatrix}-(1-x)\\1+y\end{pmatrix}.
\]
Since the unit ball in $T_pQ_\st$ is symmetric about the origin, these vertices and their opposites suffice to describe it. For the other cases, similar computations give the following:

\noindent\textbf{Case $\mathbf{|x|<y}$:}
\begin{gather*}
\xi_{z_1}=(1+x)\begin{pmatrix}1-x\\1-y\end{pmatrix},  
\qquad
\xi_{z_2}=(1-x)\begin{pmatrix}-(1+x)\\1-y\end{pmatrix} \\
\xi_{z_3}= -(1-y)\begin{pmatrix}1+x\\1+y\end{pmatrix}, 
\qquad 
\xi_{z_4}=-(1-y)\begin{pmatrix}-(1-x)\\1+y\end{pmatrix}.
\end{gather*}

\noindent\textbf{Case $\mathbf{x<-|y|}$:}
\begin{gather*}
\xi_{z_1}=(1+x)\begin{pmatrix}1-x\\1-y\end{pmatrix},  
\qquad
\xi_{z_2}= (1+y)\begin{pmatrix}-(1+x)\\1-y\end{pmatrix} \\
\xi_{z_3}= -(1-y)\begin{pmatrix}1+x\\1+y\end{pmatrix}, 
\qquad 
\xi_{z_4}=-(1+x)\begin{pmatrix}-(1-x)\\1+y\end{pmatrix}.
\end{gather*}

\noindent\textbf{Case $\mathbf{y<-|x|}$:}
\begin{gather*}
\xi_{z_1}=(1+y)\begin{pmatrix}1-x\\1-y\end{pmatrix},  
\qquad
\xi_{z_2}= (1+y)\begin{pmatrix}-(1+x) \\1-y\end{pmatrix} \\
\xi_{z_3}= -(1-x)\begin{pmatrix}1+x\\1+y\end{pmatrix}, 
\qquad 
\xi_{z_4}=-(1+x)\begin{pmatrix}-(1-x)\\1+y\end{pmatrix}.
\end{gather*}

\subsection{Dual unit ball}\label{sec: dual unit ball quad}
We can now prove Proposition~\ref{prop: quad area}.

\begin{proof}[Proof of Proposition~\ref{prop: quad area}]
We have seen in the last section how to compute the vertices of the unit ball at a point $p \in Q_\st\setminus \{y = \pm x\}$.
We would like to apply Theorem~\ref{thm:DualPolygon} to compute the dual of the symmetric polygon with vertices $\pm \xi_{z_1},\pm \xi_{z_2},  \pm \xi_{z_3}, \pm \xi_{z_4}$.
In all of the four cases the vertices 
\[
\xi_{z_1}, \xi_{z_2}, -\xi_{z_3}, \text{ and }-\xi_{z_4}
\] 
lie above the horizontal axis.
Recall that $\prec$ denotes the order on the first coordinates of vectors in $\bb R^2$.
To specify the order of the vertices, we need to treat the above four cases separately.
Recall that we have $-1<x,y<1$.\\

\noindent\textbf{Case $\mathbf{|y|<x}$:}
We have $
\xi_{z_2}\prec -\xi_{z_4}\prec \xi_{z_1}\prec -\xi_{z_3}$,  and thus the dual octagon has vertices $\pm w_1, \ldots, \pm w_4$ with
\begin{gather*}
w_1=\frac{1}{2}\begin{pmatrix}-\frac{1}{1-x^2}\\0\end{pmatrix},\qquad w_2=\frac{1}{4}\begin{pmatrix}-\frac{1}{1-x}\\\frac{1}{1-y}\end{pmatrix}, \qquad w_3=\frac{1}{2}\begin{pmatrix}0\\\frac{1}{1-y^2}\end{pmatrix},\qquad w_4=\frac{1}{4}\begin{pmatrix}\frac{1}{1-x}\\\frac{1}{1+y}\end{pmatrix};
\end{gather*}
see Theorem \ref{thm:DualPolygon}.
Again using the shoelace formula as in the proof of Lemma~\ref{lemma: integrand triangle},  we compute its (Lebesgue) area to be
\[
A_{Q_\st}(x,y)= \frac{2+x}{2(1-x^2)(1-y^2)}.
\]
We perform similar computations in the other three cases and obtain the following:\\

\noindent\textbf{Case $\mathbf{|x|<y}$:}
We have $\xi_{z_2}\prec -\xi_{z_4}\prec -\xi_{z_3}\prec \xi_{z_1}$ and $A_{Q_\st}(x,y)=\frac{2+y}{2(1-x^2)(1-y^2)}$.\\

\noindent\textbf{Case $\mathbf{x<-|y|}$:}
We have $ -\xi_{z_4}\prec\xi_{z_2}\prec -\xi_{z_3}\prec \xi_{z_1}$ and $A_{Q_\st}(x,y)= \frac{2-x}{2(1-x^2)(1-y^2)}$.\\

\noindent\textbf{Case $\mathbf{y<-|x|}$:}
We have $-\xi_{z_4}\prec\xi_{z_2}\prec  \xi_{z_1}\prec -\xi_{z_3}$ and $A_{Q_\st}(x,y)=\frac{2-y}{2(1-x^2)(1-y^2)}$.\\

Putting all cases together the area of the dual ball at $p=(x,y) \in Q_\st$ is
\[A_{Q_\st}(x,y)=\frac{2+\max\{|x|,|y|\}}{2(1-x^2)(1-y^2)},\]
and this concludes the proof of Proposition~\ref{prop: quad area}.
\end{proof}

\section{Example 1: Hyperbolic case}
\label{sec:FuchsianLocus}

The goal of this section is the proof of Proposition~\ref{prop: intro hyperbolic}.
For this assume that we have a positive quadruple of flags ${\bf F}=(E,F,G,H)$ with triple ratios 
\[
t=t'=T(E,F,G)=T(E,G,H)=1
\] 
and double ratios 
\[
d=d'=D_1(E,F,G,H)=D_2(E,F,G,H) \in (0,\infty). 
\]
In terms of geometric structures on surfaces, this corresponds to the case of having a quadruple of flags coming from a hyperbolic structure, seen as a convex real projective structure via the Klein--Beltrami model. For this reason, we say that ${\bf F}$ is {\em hyperbolic}. 
\begin{remark}\label{rmk:diagonal choice} Note that ${\bf F}$ being hyperbolic does not depend on the choice of oriented diagonal used to define the triple and double ratios of ${\bf F}$, as we now explain. First, observe that reversing the orientation on a diagonal in this case does not change the triple and double ratios, thanks to their symmetries described in Remark \ref{rmk: combinatorics triple and double}.  Furthermore, if we flip the diagonal from $E$ to $G$ to the diagonal from $F$ to $H$ as in Remark \ref{rmk: combinatorics triple and double}, the change of coordinates becomes
\begin{gather*}
T(F,E,H)=T(F,H,G)= \frac{1+d+d+d^2}{1+d+d+d^2}=1, \quad
D_1(F,E,H,G)=D_2(F,E,H,G)=\frac{1+d}{d(1+d)}=\frac{1}{d}.
\end{gather*}
\end{remark}
Under the assumption that the quadruple ${\bf F}$ is hyperbolic, the normalization from Section \ref{sec:HTQuadruple} (see also Figure \ref{fig:PosQuadruple}) becomes $\alpha=-\alpha_1=\alpha_2=\beta_1=-\beta_2 \in (-1,1)$ and $d=\tfrac{1+\alpha}{1-\alpha} \in (0,\infty)$. 
By symmetry we can assume that $\alpha\geq 0$.
Then, as in Figure \ref{fig: Fuchsian}, a point $p=(x,y)$ lies in the quadrilateral $Q$ if and only if
\begin{gather*}
|y +x| < 1 +\alpha \text{ and }|y -x| < 1 -\alpha.
\end{gather*}
Note that we know a priori that the area $\vol(\bf F)$ is finite as it less or equal to the Holmes--Thompson area of an ideal quadrilateral in the Klein--Beltrami model of the hyperbolic plane, which is $2\pi$ (see Remark~\ref{rem: volume decreasing}).
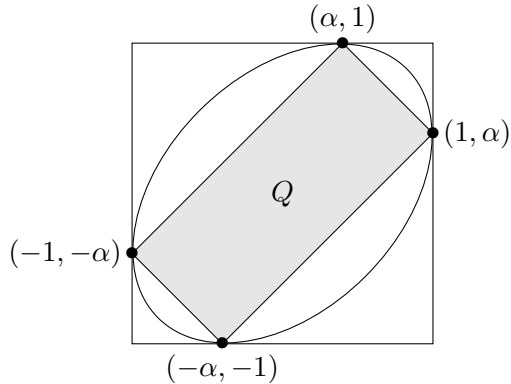
\begin{figure}[h]
\begin{tikzpicture}[scale=2]
\draw (-1,-1) -- (1,-1) -- (1,1) -- (-1,1)-- cycle;
\filldraw[fill=gray!20] (1,.4) -- (.4,1) -- (-1,-.4) -- (-.4, -1) --cycle;
\draw[rotate=45] (0,0) ellipse (1.17cm and .78cm);
\draw (1,.4) node{$\bullet$};
\draw (1,.4) node[right]{$(1,\alpha)$};
\draw (.4,1) node{$\bullet$};
\draw (.4,1) node[above]{$(\alpha,1)$};
\draw (-1,-.4) node{$\bullet$};
\draw (-1,-.4) node[left]{$(-1,-\alpha)$};
\draw (-.4,-1) node{$\bullet$};
\draw (-.4,-1) node[below]{$(-\alpha,-1)$};
\draw (0,0) node{$Q$};
\end{tikzpicture}
\caption{A pair of hyperbolic quadrilaterals}
\label{fig: Fuchsian}
\end{figure}

To compute the Holmes--Thompson area of $Q$ with respect to the standard quadrilateral $Q_{\st}$,  recall from Proposition~\ref{prop: quad area} that we need to integrate the function $A_{Q_\st}$ over $Q$.
Since in this subsection the outer quadrilateral is fixed to be $Q_\st$,  to improve readability we write $A$ instead of $A_{Q_\st}$.
We start by changing variables setting $x+y=u$ and $x-y=v$ and
using the identity $\max\{|x|,|y|\}=\tfrac{|x+y|}{2}+\tfrac{|x-y|}{2}$ to obtain
\begin{align}\label{eq:change of variables}
4 \int_{Q}A(x,y)dxdy&=2\int_{Q}\frac{2+\max\{|x|,|y|\}}{(1-x^2)(1-y^2)}dydx=\frac{1}{2}\int_{Q'}\frac{4+|u|+|v|}{(1-\tfrac{1}{4}(u+v)^2)(1-\tfrac{1}{4}(u-v)^2)}dudv,
\end{align}
where we integrate over the rectangle $Q'$ with vertices
\[
(1+\alpha,\pm(1-\alpha))\text{ and } (-(1+\alpha),\pm(1-\alpha)).
\]
Since the integrand is symmetric about the $u$- and $v$-axis,  it suffices to integrate over the positive quadrant.
Thus we obtain that 
\begin{align*}
4\int_{Q}A(x,y)dxdy &= 2\int_{0}^{1+\alpha}\int_{0}^{1-\alpha}\frac{4+u+v}{(1-\tfrac{1}{4}(u+v)^2)(1-\tfrac{1}{4}(u-v)^2)}dvdu\\
&=32\int_{0}^{1+\alpha}\int_{0}^{1-\alpha}\frac{4+u+v}{(4-(u+v)^2)(4-(u-v)^2)}dvdu
\end{align*}
Now, using partial fractions in the variable $v$, an elementary (but tedious) computation shows that
\begin{align*}
4\int_{Q}A&=\int_{0}^{1+\alpha}\frac{4}{u} \left(\frac{3 \ln (2-u-v)}{u-2}+\frac{(u+3) \ln
   (2+u-v)}{u+2}-\frac{(u+1) \ln (2-u+v)}{u-2}-\frac{\ln
   (2+u+v)}{u+2}\right)\Bigg\vert_{0}^{1-\alpha}du.
\end{align*}
Then, with the help of computational software \cite{Mathematica} and the equality
\[
\dil(x)-\dil(2-x)=\dil(x)+\dil(x-1)-\frac{\pi^2}{6}+\ln(x-1)\ln(2-x),  \ \text{ for }x<0,
\]
we deduce
\begin{align*}
4\int_QA=2 \Bigg[&3 \dil\left(-\frac{2-u}{1-\alpha}\right)+3
   \dil\left(-\frac{1+\alpha-u}{1-\alpha}\right)-\dil\left(\frac{u}{3-\alpha}\right)+\dil\left(-\frac{u}{3-\alpha}\right)-3
   \dil\left(-\frac{u}{1+\alpha}\right)\\
   &+3\dil\left(\frac{u}{1+\alpha}\right)-\dil\left(-\frac{2+u}{1-\alpha}\right)-\dil\left(-\frac{1+\alpha+u}{1-\alpha}\right)+3 \ln (1+\alpha-u) \ln
   \left(\frac{2-u}{1-\alpha}\right)\\
   &+\ln (1-\alpha) \ln
   \left(\frac{(1-\alpha)^2 (2+u)}{(2-u)^3}\right)-\ln
   \left(\frac{2+u}{1-\alpha}\right) \ln (1+\alpha+u)\\
   &+2
   \dil\left(-\frac{u}{2}\right)-2
   \dil\left(\frac{u}{2}\right)-\frac{\pi
   ^2}{3}\Bigg]\Bigg\vert_{0}^{1+\alpha}.
\end{align*}
Note that once the above expression for the integral is found, we can easily verify its correctness by computing its derivative.
Taking limits as $u \to 0$ and $u \to 1+\alpha$ is straightforward except (maybe) for the term $\ln (1+\alpha-u) \ln\left(\frac{2-u}{1-\alpha}\right)$, but we can check that
\[
\lim_{u\to 1+
\alpha}\ln (1+\alpha-u) \ln\left(\frac{2-u}{1-\alpha}\right)= 0.
\]
Thus we obtain
\begin{align*}
4\int_QA =&-2 \dil\left(-2\ \frac{1+\alpha}{1-\alpha}\right)+4
   \dil\left(\frac{1}{2} (-\alpha-1)\right)-4
   \dil\left(\frac{2}{\alpha-1}\right)-4\dil\left(\frac{\alpha+1}{2}\right)\\
   &-2\dil\left(\frac{\alpha+1}{3-\alpha}\right)+2\dil\left(\frac{\alpha+1}{\alpha-3}\right)-4
   \dil\left(\frac{\alpha+1}{\alpha-1}\right)-2
   \dil\left(\frac{\alpha+3}{\alpha-1}\right)-4 \ln
   \left(-\frac{2}{\alpha-1}\right) \ln (\alpha+1)\\
   &+2 \ln
   (1-\alpha) (-3 \ln (1-\alpha)+\ln (\alpha+3)+\ln (4))+2 \ln
   (2 (\alpha+1)) \ln \left(\frac{1-\alpha}{3+\alpha}\right)+\pi ^2\\
   =& \ 2 \ \Big(-\dil\left(-2d\right)
+2 \dil\left(\frac{-d}{1+d}\right)
-2\dil\left(-(1+d)\right)
-2\dil\left(\frac{d}{1+d}\right)\\
&-\dil\left(\frac{d}{2+d}\right)
+\dil\left(\frac{-d}{2+d}\right)
-2 \dil\left(-d\right)
-\dil\left(-(1+2d)\right)
-2\ln \left(1+d\right) \ln\left(\frac{2d}{1+d}\right)\\
&+\ln \left(\frac{2}{1+d}\right) 
(-3 \ln \left(\frac{2}{1+d}\right)
+\ln\left(\frac{2+4d}{1+d}\right)
+\ln (4))
+2 \ln \left(\frac{4d}{1+d}\right)
\ln \left(\frac{1}{1+2d}\right)
+\frac{1}{2}\pi ^2\Big),
\end{align*}
where we used $d=\frac{1+\alpha}{1-\alpha}$ in the last equality.
This proves Proposition~\ref{prop: intro hyperbolic}.

\begin{remark}
Using the formula above and computational software \cite{Mathematica}, one can show that
\[
\frac{d}{d\alpha}\Big\vert_{\alpha=0}\int_QA=0\qquad\text{and}\qquad\frac{d^2}{d\alpha^2}\Big\vert_{\alpha=0}\int_QA=\frac{32\ln(2)}{9}+8\ln(3)-16<0.
\]
Hence the Holmes--Thompson area has a local maximum at $\alpha=0$. 
One computes for $\alpha=0$,  that  
\begin{align*}
 \int_QA=& \frac{1}{4}\Big(-2 \dil\left(-2\right)+4 \dil\left(-\frac{1}{2}\right) -4\dil\left(-2\right)-4\dil\left(\frac{1}{2}\right)\\
   &-2\dil\left(\frac{1}{3}\right)+2\dil\left(-\frac{1}{3}\right)- 4\dil\left(-1\right)- 2\dil\left(-3\right)+2 \ln
   (2) \ln \left(\frac{1}{3}\right)+\pi ^2 \Big) \approx 4.66565,
\end{align*}
thus 
\[\vol(\bf{F})=\vol(E,F,G,H) =\frac{1}{\pi}\int_Q A \approx 1.48512.\]
In particular,  comparing to the area of an ideal quadrilateral in hyperbolic space,  we get
\[ \frac{\vol(E,F,G,H)}{2\pi} \approx 0.23636,\]
thus computing roughly $1/4$ of the hyperbolic area. 
Plotting the function $\vol({\bf F})$ with respect to the variable $\alpha\in (-1,1)$ suggests that $\alpha=0$ (equivalently,  $d=1$) is a global maximum \cite{Mathematica},  see Figure~\ref{fig:area_hyperbolic}.
\end{remark}
\begin{figure}[h!]
\includegraphics[scale=0.6]{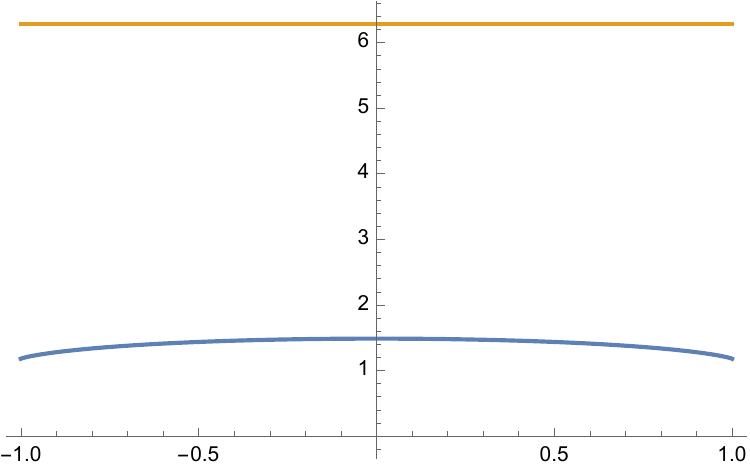}
\caption{The graph of the function $\vol({\bf F})$ in terms of the parameter $-1 < \alpha <1$. The horizontal line represents the hyperbolic area, which is constant equal to $2\pi$.}
\label{fig:area_hyperbolic}
\end{figure}

\begin{corollary}
We have $\lim_{\alpha \to 1} \frac{1}{\pi}\int_Q A= \frac{3}{8} \pi \approx 1.17810>0$. 
In particular, $\lim_{\alpha \to 1} \int_Q A \neq 0$.
\end{corollary}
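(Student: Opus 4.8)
The plan is to avoid passing to the limit $d\to\infty$ in the closed dilogarithm formula of Proposition~\ref{prop: intro hyperbolic} (which is possible but delicate, as the divergent $\dil$-terms and the logarithmic products only cancel after careful bookkeeping), and instead to analyze the area integral directly as $\alpha\to1$. Concretely, I would start from the expression already obtained in this section,
\[
\int_Q A \;=\; 8\int_0^{1+\alpha}\!\!\int_0^{1-\alpha}\frac{4+u+v}{\bigl(4-(u+v)^2\bigr)\bigl(4-(u-v)^2\bigr)}\,dv\,du .
\]
Setting $\epsilon=1-\alpha$, the domain becomes $u\in(0,2-\epsilon)$, $v\in(0,\epsilon)$, and the only singularity inside the closed domain sits at the corner $(u,v)=(2-\epsilon,\epsilon)$, where $u+v=2$. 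Geometrically, as $\alpha\to1$ the quadrilateral $Q$ collapses onto the diagonal of $Q_\st$ and all of the area concentrates at the two vertices $(\pm1,\pm1)$ of $Q_\st$, the points where the integrand $A_{Q_\st}$ blows up; the whole content of the statement is that this concentrated mass is finite and positive.

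I would make this precise by the corner-blow-up substitution $u=2-\epsilon\sigma$, $v=\epsilon\tau$. The Jacobian contributes $\epsilon^{2}$, which exactly cancels the $\epsilon^{2}$ produced by the two vanishing factors $4-(u\pm v)^2=\epsilon(\sigma\mp\tau)\bigl(4-\epsilon(\sigma\mp\tau)\bigr)$, so that
\[
\int_Q A \;=\; 8\int_1^{2/\epsilon}\!\!\int_0^1 \frac{6-\epsilon(\sigma-\tau)}{(\sigma-\tau)(\sigma+\tau)\bigl(4-\epsilon(\sigma-\tau)\bigr)\bigl(4-\epsilon(\sigma+\tau)\bigr)}\,d\tau\,d\sigma .
\]
As $\epsilon\to0$ the integrand converges pointwise to $\dfrac{3}{8(\sigma^2-\tau^2)}$ and the domain exhausts $(1,\infty)\times(0,1)$.

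The heart of the argument is the interchange of limit and integral, which I would justify by dominated convergence. For $\epsilon\in(0,1)$ and $(\sigma,\tau)$ in the domain one has $\epsilon(\sigma-\tau)\le\epsilon\sigma\le2$ and $\epsilon(\sigma+\tau)\le2+\epsilon<3$, hence $4-\epsilon(\sigma-\tau)\ge2$ and $4-\epsilon(\sigma+\tau)\ge1$, giving the $\epsilon$-independent bound $\frac{3}{\sigma^2-\tau^2}$, whose only singularity is the integrable one at $(\sigma,\tau)=(1,1)$. Dominated convergence then yields
\[
\lim_{\alpha\to1}\int_Q A \;=\; 8\int_1^{\infty}\!\!\int_0^1 \frac{3}{8(\sigma^2-\tau^2)}\,d\tau\,d\sigma \;=\; \frac32\int_1^{\infty}\frac1\sigma\,\ln\frac{\sigma+1}{\sigma-1}\,d\sigma .
\]
Substituting $w=1/\sigma$ turns the last integral into $\int_0^1\frac1w\ln\frac{1+w}{1-w}\,dw$, which equals $2\sum_{k\ge0}(2k+1)^{-2}=\frac{\pi^2}{4}$ (the same constant $\tfrac{\pi^2}{4}$ appearing in the triangle computation of Section~\ref{sec:HTTriple}). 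This gives $\lim_{\alpha\to1}\int_Q A=\frac{3\pi^2}{8}$ and therefore $\lim_{\alpha\to1}\frac1\pi\int_Q A=\frac{3\pi}{8}>0$, as claimed.

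The main obstacle is exactly this dominated-convergence step: one must simultaneously check that the bulk of the region (where $u$ stays bounded away from $2$, so the $v$-slab of width $\sim\epsilon$ contributes $O(\epsilon)$) disappears in the limit, and that the corner contribution remains finite. The rescaling is precisely what packages both facts into a single uniform integrable majorant. A secondary subtlety is that this majorant has a genuine (but integrable) singularity at $(\sigma,\tau)=(1,1)$, so its integrability should be verified as an honest two-dimensional integral rather than through the inner integral at $\sigma=1$, which diverges.
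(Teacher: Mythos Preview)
Your argument is correct and takes a genuinely different route from the paper. The paper starts from the closed dilogarithm formula of Proposition~\ref{prop: intro hyperbolic} and lets $\alpha\to1$, handling the divergent terms $\dil\!\bigl(\frac{2}{\alpha-1}\bigr)$, $\dil\!\bigl(\frac{\alpha+1}{\alpha-1}\bigr)$, $\dil\!\bigl(\frac{\alpha+3}{\alpha-1}\bigr)$ via the inversion identity $\dil(z)=-\tfrac{\pi^2}{6}-\tfrac12\ln^2(-z)-\dil(1/z)$ for $z<0$; after the resulting $\ln^2(1-\alpha)$-cancellations only finitely many finite dilogarithm and logarithm values remain, summing to $\tfrac{3\pi^2}{8}$. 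Your proof bypasses the closed formula entirely: you return to the double integral, perform the corner blow-up $u=2-\epsilon\sigma$, $v=\epsilon\tau$, and apply dominated convergence with the explicit majorant $3/(\sigma^2-\tau^2)$, reducing the limit to $\tfrac32\int_0^1\tfrac1w\ln\tfrac{1+w}{1-w}\,dw=\tfrac{3\pi^2}{8}$. Your approach is more elementary (no dilogarithm identities) and geometrically transparent (it isolates and quantifies the mass concentration at the ideal vertices); the paper's approach has the advantage of leveraging the already established explicit formula. One small point worth making explicit: since the $\sigma$-domain $(1,2/\epsilon)$ varies with $\epsilon$, dominated convergence should formally be applied to $\mathbf 1_{\{\sigma<2/\epsilon\}}$ times the integrand on all of $(1,\infty)\times(0,1)$, which your majorant still bounds.
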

\begin{proof}
We use the explicit calculation of $\int_Q A$ above and the following properties 
about the dilogarithm: $\dil(0)=0$, $\dil(1)=\frac{\pi^2}{6} $, $\dil (-1)=-\frac{\pi^2}{12}$,  and for negative real $z$,  one has
\[\dil(z) = -\frac{\pi^2}{6}-\frac{\ln^2(-z)}{2}-\dil\left(\frac{1}{z}\right),\]
see e.g.\ \cite[Sections 1 and 2]{Zagier_Dilog}.
Using this identity  we then obtain for some positive function $f(\alpha)$ satisfying $\lim_{\alpha\to 1} f(\alpha) >0$ (e.g.\ $f(\alpha) = \alpha+1$ or $f(\alpha)=\alpha+3$) that
\[ \lim_{\alpha \to 1} \dil\left( \frac{f(\alpha)}{\alpha-1}\right) =  \lim_{\alpha \to 1}  -\frac{\pi^2}{6}-\frac{1}{2} \ln^2\left(\frac{1-\alpha}{f(\alpha)}\right). \]
Applying this four times in the explicit expression of $\int_Q A$ and simplifying yields the desired result.
\end{proof}

\section{Example 2: $dd'=1$ and $tt'=1$}
\label{sec:HTQuadrupleEx2}
We are particularly interested in the example of a positive quadruple of flags $(E,F,G,H)$ such that
\begin{equation}\label{eq:tt=1dd=1}
tt'=T(E,F,G)\cdot T(E,G,H)=1\qquad\text{ and }\qquad dd'=D_1(E,F,G,H)\cdot D_2(E,F,G,H)=1,
\end{equation}
as these conditions appear when parametrizing finite area convex real projective structures on the thrice-punctured sphere, see Section~\ref{sec:ProjStrSphere} for details. 
Note, as in Remark~\ref{rmk:diagonal choice}, it follows from Remark~\ref{rmk: combinatorics triple and double} that the relations (\ref{eq:tt=1dd=1}) do not depend on the choice of oriented diagonal for the quadruple of flags $\mathbf{F}=(E,F,G,H)$, and thus we write $\bf{F}(d,t)$.

Using the normalized coordinates from Section \ref{sec:HTQuadruple} the equations (\ref{eq:tt=1dd=1}) become $\beta_1=\beta_2$ and $\alpha_1=\alpha_2$, thus we set $\alpha\coloneqq \alpha_1=\alpha_2$ and $\beta \coloneqq \beta_1=\beta_2$,  and we use the notation $Q=Q_{\alpha,\beta}$.
By symmetry of the area function $A=A_{Q_\st}$, we only need to consider the case $0\leq \alpha, \beta<1$. 
By work of Marquis \cite{marquis2012surface} (see the discussion in Section~\ref{sec:ProjStrSphere},  and Remarks~\ref{rem: volume bilipschitz} and~\ref{rem: volume decreasing}), we know a priori that the integral $\int_{Q_{\alpha,\beta}} A$ is finite.

\begin{figure}[h]
\begin{tikzpicture}[scale=2]
\draw (-1,-1) -- (1,-1) -- (1,1) -- (-1,1)-- cycle;
\filldraw[fill=gray!20] (1,.6) -- (.3,1) -- (-1,.6) -- (.3, -1) --cycle;
\draw (1,.6) node{$\bullet$};
\draw (1,.6) node[right]{$(1,\alpha)$};
\draw (.3,1) node{$\bullet$};
\draw (.3,1) node[above]{$(\beta,1)$};
\draw (-1,.6) node{$\bullet$};
\draw (-1,.6) node[left]{$(-1,\alpha)$};
\draw (.3,-1) node{$\bullet$};
\draw (.3,-1) node[below]{$(\beta,-1)$};
\draw (.2,0) node{$Q_{\alpha,\beta}$};
\end{tikzpicture}
\caption{We study the asymptotic behavior of the Holmes--Thompson area of $Q_{\alpha,\beta}$ as inscribed in the standard quadrilateral.}
\end{figure}
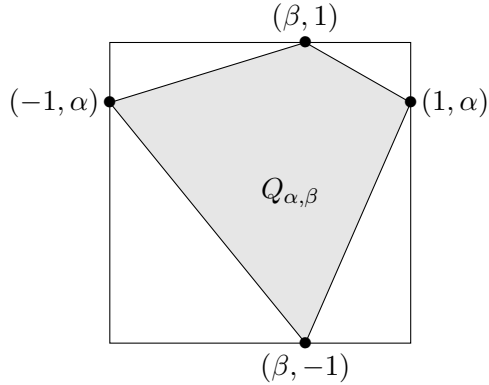

The main result of this section is as follows.
\begin{theorem}
\label{thm:mainS03} 
For $0 \leq \alpha, \beta<1$,  there exists a constant $C\in(0,\infty)$ such that
\[
\lim_{\max\{\alpha,\beta\}\to 1}\frac{\int_{Q_{\alpha,\beta}} A(x,y)dxdy}{\ln^2(1-\max\{\alpha,\beta\})}=C.
\]
\end{theorem}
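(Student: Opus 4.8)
The plan is to isolate the logarithmic blow‑up of the area integral at the corners of $Q_\st$ that the vertices of $Q_{\alpha,\beta}$ approach. Throughout set $a=1-\alpha$ and $b=1-\beta$, and note that $1-\max\{\alpha,\beta\}=\min\{a,b\}$, so $\ln^2(1-\max\{\alpha,\beta\})=\max\{\ln^2 a,\ln^2 b\}$. First I would discard the piecewise factor in the integrand: since $0\le\max\{|x|,|y|\}\le 1$ on $Q_\st$, Proposition~\ref{prop: quad area} gives $\tfrac{1}{(1-x^2)(1-y^2)}\le A(x,y)\le \tfrac{3}{2(1-x^2)(1-y^2)}$, so it suffices to analyse $J(\alpha,\beta):=\int_{Q_{\alpha,\beta}}\tfrac{dx\,dy}{(1-x^2)(1-y^2)}$; moreover $\max\{|x|,|y|\}\to1$ at every corner, so the leading behaviour of $\int A$ is $\tfrac32$ times that of $J$. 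A convenient way to visualize the divergence is the substitution $X=\arctanh x$, $Y=\arctanh y$, under which $J$ becomes the Euclidean area of the image $\widetilde Q$ of $Q_{\alpha,\beta}$ in $\R^2$: the four open corners of $Q_\st$ are sent to the ends $(\pm\infty,\pm\infty)$, and $\widetilde Q$ grows an unbounded tentacle toward an end exactly when a vertex of $Q_{\alpha,\beta}$ limits onto that corner, so the area growth is carried by these tentacles.

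The heart of the argument is a corner‑by‑corner estimate. I would cover $Q_{\alpha,\beta}$ by a fixed central region (whose contribution stays $O(1)$) together with $\delta$‑neighbourhoods $N_c$ of the corners $c$ of $Q_\st$. For each corner I translate $c$ to the origin, write $(u,v)$ for the distances to the two incident sides, use $A\sim \tfrac{3}{8uv}$, and rescale $u,v$ by the distances of the incident vertices to $c$. Each corner then reduces to a universal truncated integral $\iint_{p+q>1,\,p<P,\,q<Q}\tfrac{dp\,dq}{pq}=\tfrac12(\ln P+\ln Q)^2+O(\ln P+\ln Q)$, where $\ln P,\ln Q$ are the two relevant logarithmic scales $|\ln a|,|\ln b|$ (or $O(1)$ when the incident vertex stays at bounded distance). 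Carrying this out: the corners $(-1,1)$ and $(1,-1)$, reached only by $(-1,\alpha)$ and $(\beta,-1)$ respectively, contribute $\asymp\ln^2 a$ and $\asymp\ln^2 b$; the corner $(1,1)$, reached by both $(1,\alpha)$ and $(\beta,1)$, contributes $\asymp(\ln a+\ln b)^2$; and the untouched corner $(-1,-1)$ contributes $O(1)$.

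Summing the localized contributions expresses $\int_{Q_{\alpha,\beta}}A$, to leading order, as a fixed positive‑definite quadratic form in $(\ln a,\ln b)$. Because such a form is bounded above and below by multiples of $\max\{\ln^2 a,\ln^2 b\}=\ln^2(1-\max\{\alpha,\beta\})$, this yields the two‑sided comparison and in particular the positivity of the limiting ratio; the lower bound alone already follows by keeping only one single‑vertex corner neighbourhood, say near $(-1,1)$ when $\alpha=\max\{\alpha,\beta\}$. (This comparison is exactly what feeds into Theorem~\ref{thm: intro: S03}.) Pinning down the precise constant, as opposed to the order of growth, requires evaluating the universal truncated integral uniformly in $a,b$ and summing, which—paralleling Section~\ref{sec:FuchsianLocus}—is where the exact dilogarithmic values appear.

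The main obstacle is the corner $(1,1)$, where two vertices collide simultaneously but at possibly different rates $a$ and $b$. Here the rescaled region depends on both small parameters, the truncations at scales $\epsilon/a$ and $\epsilon/b$ interact, and the blow‑up is no longer captured by a single scale. Concretely, one must (i) show that the two thin slivers running along the sides $x=1$ and $y=1$ of $Q_\st$—which connect $(1,1)$ to the neighbouring corners—are counted exactly once, not double‑counted against the single‑vertex corners, and (ii) verify that the edge‑region and central‑region remainders are genuinely of lower order than $\max\{\ln^2a,\ln^2b\}$. Controlling these cross‑contributions uniformly as $\max\{\alpha,\beta\}\to1$ is the technical crux of the proof.
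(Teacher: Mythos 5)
Your corner-localization strategy is genuinely different from the paper's proof, which instead decomposes $Q_{\alpha,\beta}$ using the square $Q'=[-\gamma,\gamma]^2$ with $\gamma=\tfrac{1-\alpha\beta}{2-\alpha-\beta}$, evaluates $\int_{Q'}A$ in closed form via dilogarithms (Lemmas~\ref{lem:Talpha and Tbeta} and~\ref{lem:Q'}), and shows the four triangles of $Q\setminus Q'$ are negligible (Proposition~\ref{lem:Deltai}). But your proposal does not prove the statement, and the gap is not the deferrable ``pin down the constant'' step you describe. The theorem asserts the \emph{existence} of the limit $C$, and a two-sided comparison with $\max\{\ln^2a,\ln^2b\}$ can never yield that. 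Worse, your own bookkeeping is structurally incompatible with the existence of $C$: you obtain, to leading order, $c_1\ln^2a+c_2\ln^2b+c_3(\ln a+\ln b)^2$ with $c_1,c_2,c_3>0$, and the ratio of such a form to $\max\{\ln^2a,\ln^2b\}$ tends to $c_1+c_3$ along paths with $b$ fixed, but to $c_1+c_2+4c_3$ along $a=b$; these agree only if $c_2+3c_3=0$, impossible for positive constants. So either your corner contributions are wrong, or the limit does not exist --- your write-up cannot have it both ways.

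In fact, carrying out your corner estimates carefully shows they are essentially \emph{correct}: with $A\approx\tfrac{3}{8uv}$ near each corner, the two singly-approached corners contribute $\tfrac{3}{16}\ln^2a$ and $\tfrac{3}{16}\ln^2b$, the corner $(1,1)$ contributes $\tfrac{3}{16}(\ln a+\ln b)^2$, and the sides and center contribute $O(|\ln a|+|\ln b|+1)$, giving
\[
\int_{Q_{\alpha,\beta}}A \;=\; \tfrac{3}{8}\bigl(\ln^2a+\ln a\ln b+\ln^2b\bigr)+O\bigl(|\ln a|+|\ln b|+1\bigr).
\]
One can check this independently: for $\beta=0$, integrating in $y$ over $Q\cap\{y\geq|x|\}=\{|x|\le y\le 1-a|x|\}$ gives $\tfrac38\ln^2 a+O(|\ln a|)$ and the other three diagonal sectors give $O(|\ln a|)$, so the ratio in Theorem~\ref{thm:mainS03} tends to $\tfrac38$; whereas for $\alpha=\beta$ a similar sector computation gives ratio $\tfrac98$. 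The conflict is therefore with the statement, not with your localization: the paper's proof tacitly uses $Q'\subset Q_{\alpha,\beta}$ (so that $\int_QA=\int_{Q'}A+\sum_i\int_{\Delta_i}A$), but this inclusion fails --- a short computation shows $(-\gamma,\gamma)\in Q_{\alpha,\beta}$ if and only if $2\beta(1-\alpha)^2\le0$, so for $\alpha,\beta\in(0,1)$ three corners of $Q'$ lie strictly outside $Q$, and Lemma~\ref{lem:Q'} overcounts by the mass of $Q'\setminus Q$. What survives of both arguments is the two-sided bound $\tfrac38\le\liminf\le\limsup\le\tfrac32$ for the ratio (lower bound from $T_\alpha\cup T_\beta\subset Q$, upper bound from $\int_QA\le\int_{Q'}A+\sum_i\int_{\Delta_i}A$), which is all that Theorem~\ref{thm: intro: S03} actually needs. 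Pushed to completion, your method refutes Theorem~\ref{thm:mainS03} as stated rather than proving it; the correct conclusion it yields is the path-dependent asymptotic displayed above.
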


By the equations~(\ref{eq: ai bi via triple and double}) in Section~\ref{sec:HTQuadruple},  we can express
\[\alpha=\frac{1-t}{1+t},  \quad \beta=-\frac{1-d}{1+d},\]
 and we note that $\alpha\geq 0$ if and only if $0<t\leq 1$,  and $\beta \geq 0$ if and only if $1 \leq d$.
Thus we obtain the following result as an immediate corollary.

\begin{corollary}[Theorem~\ref{thm: intro estimate}]
\label{corollary: estimate}
There exists a constant $C\in(0,\infty)$ such that
\[
\lim_{\|(\ln(d),\ln(t))\|\to\infty}\frac{\vol(\mathbf F(d,t))}{\ln^2(d)+\ln^2(t)}=C.
\]
\end{corollary}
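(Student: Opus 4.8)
The plan is to deduce Corollary~\ref{corollary: estimate} from Theorem~\ref{thm:mainS03} together with the explicit formula of Corollary~\ref{prop: VolumePositiveQuadruple}, so I would first record the translation. We have $\vol(\mathbf F(d,t))=\tfrac1\pi\int_{Q_{\alpha,\beta}}A$, and under $\alpha=\frac{1-t}{1+t}$, $\beta=\frac{d-1}{d+1}$ one gets $1-\alpha=\frac{2t}{1+t}$ and $1-\beta=\frac{2}{d+1}$, so $\ln^2(1-\alpha)\sim\ln^2 t$ as $t\to0$ and $\ln^2(1-\beta)\sim\ln^2 d$ as $d\to\infty$, while $\|(\ln d,\ln t)\|\to\infty$ is equivalent to $\max\{\alpha,\beta\}\to1$. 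Thus everything reduces to the asymptotics of $\int_{Q_{\alpha,\beta}}A$, and using the symmetry $A(x,y)=A(y,x)$, which exchanges $Q_{\alpha,\beta}$ with $Q_{\beta,\alpha}$, I may assume $\max\{\alpha,\beta\}=\alpha$.

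The engine of the estimate is that every vertex of $Q_{\alpha,\beta}$ lies on an edge of $Q_\st$, so on a fixed neighborhood of each vertex one has $\max\{|x|,|y|\}\to1$ and hence $A\sim f:=\frac{3}{2(1-x^2)(1-y^2)}$; moreover the entire divergence of $\int A$ is concentrated near these vertices, the contribution away from them staying bounded. For $f$ the variables separate, and Fubini gives
\[
\int_{Q_{\alpha,\beta}} f\,dx\,dy=\frac32\int_{-1}^{1}\frac{\arctanh y_+(x)-\arctanh y_-(x)}{1-x^2}\,dx,
\]
where $y_\pm(x)$ are the upper and lower boundary branches of $Q_{\alpha,\beta}$. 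This is the same mechanism as in Section~\ref{sec:FuchsianLocus}: the inner integral is an explicit combination of logarithms, and integrating it against $\frac1{1-x^2}$ produces dilogarithms whose large-argument behaviour $-\dil(-M)\sim\frac12\ln^2 M$ is exactly what yields quadratic growth in $\ln(1-\alpha)$.

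Organizing the estimate as in the outline, I would (a) place two explicit triangles $T_1,T_2\subset Q_{\alpha,\beta}$ at the vertices $(\pm1,\alpha)$ and integrate $f$ over them to obtain a lower bound of order $\ln^2(1-\alpha)$; (b) trap $Q_{\alpha,\beta}$ between an ideal quadrilateral, whose $A$-area is finite by Remark~\ref{rem: volume decreasing}, and a non-ideal quadrilateral $Q'$ whose $A$-area is evaluated in closed form and is $O(\ln^2(1-\alpha))$; and (c) show that the caps $Q_{\alpha,\beta}\setminus Q'$ are triangles with a single ideal vertex. Each closed-form evaluation is a finite sum of dilogarithms handled exactly as in the proof of Proposition~\ref{prop: intro hyperbolic}, and the matching upper and lower bounds are what determine the limiting constant $C$.

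The delicate point, and what I expect to be the main obstacle, is the regime where $\alpha$ and $\beta$ degenerate simultaneously. Then the vertices $(1,\alpha)$ and $(\beta,1)$ collide at the corner $(1,1)$, the pole of $\frac1{1-x^2}$ at $x=1$ overlaps the blow-up of $\arctanh y_\pm$ at $y=\pm1$, and the four corner contributions cease to decouple. Making the leading dilogarithmic asymptotics uniform across this coupled limit—so that the colliding corner terms are correctly accounted for and the one-ideal-vertex caps of step~(c) are controlled—is the heart of the matter; the two extreme check-points, $\beta$ fixed with $\alpha\to1$ and the diagonal $\alpha=\beta\to1$, should be computed first, as they isolate the pure and the mixed second-order log-contributions and thereby fix $C$ and test the claimed normalization by $\ln^2(1-\max\{\alpha,\beta\})$.
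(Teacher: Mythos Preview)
Your first paragraph is, in fact, the whole of the paper's proof of this corollary: translate $(d,t)$ to $(\alpha,\beta)$ via $\alpha=\tfrac{1-t}{1+t}$, $\beta=\tfrac{d-1}{d+1}$, use the symmetry $A(x,y)=A(y,x)$ (which swaps $Q_{\alpha,\beta}$ with $Q_{\beta,\alpha}$) together with the sign symmetries of $A$ to reduce to the quadrant $\alpha,\beta\ge 0$, and then invoke Theorem~\ref{thm:mainS03}. Once Theorem~\ref{thm:mainS03} is granted, there is nothing more to do, and the paper's proof is exactly this short.

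Everything after your first paragraph is a sketch toward \emph{re-proving Theorem~\ref{thm:mainS03}}, not the corollary. Your steps~(b) and~(c) align with the paper's Steps~2 and~3 (the non-ideal square $Q'$ with vertices $(\pm\gamma,\pm\gamma)$ and the four one-ideal-vertex triangles $\Delta_i=Q_{\alpha,\beta}\setminus Q'$). Your step~(a), however, is a genuinely different choice: you place the lower-bound triangles at the ideal vertices $(\pm1,\alpha)$, whereas the paper's $T_\alpha$ and $T_\beta$ sit at the origin, cut out by $y=\pm x$ and $y=\alpha$ (resp.\ $x=\beta$). The paper's choice makes the inner integral elementary, $\int_{-y}^{y}\tfrac{dx}{1-x^2}=\ln\tfrac{1+y}{1-y}$, and a single integration by parts produces the $\tfrac34\ln^2(1-\alpha)$ leading term. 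Your cusp-based triangles sit where \emph{both} factors $\tfrac1{1-x^2}$ and $\tfrac1{1-y^2}$ blow up, and it is not clear that they contribute $\asymp\ln^2(1-\alpha)$ rather than something larger or smaller; you would have to do the delicate estimate you defer to step~(c) already in step~(a). The ``simultaneous degeneration'' difficulty you correctly identify at the end is precisely what the paper isolates and resolves in Proposition~\ref{lem:Deltai}, by bounding each $\Delta_i$ via a concavity estimate for $\arctanh$.

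One small caution on the reduction itself: Theorem~\ref{thm:mainS03} normalizes by $\ln^2(1-\max\{\alpha,\beta\})\sim\max\{\ln^2 d,\ln^2 t\}$, while the corollary normalizes by $\ln^2 d+\ln^2 t$. These agree only up to a factor in $[1,2]$ depending on the direction of approach, so passing from one limit to the other needs a word (the paper glosses over this as well). Your instinct in the last paragraph---to check the diagonal $\alpha=\beta\to1$ against the one-variable limit---is exactly the right test for whether the normalization by $\max$ versus the sum yields the same constant.
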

\begin{proof}
We note that if $\alpha, \beta \geq0$,  then 
\[\ln^2(1-\alpha) = \ln^2\left(\frac{2t}{1+t}\right),  \; \textnormal{ and  } \ln^2(1-\beta) = \ln^2\left(\frac{2}{1+d}\right)= \ln^2\left(\frac{1+d}{2}\right),\]
which grow like $\ln^2(t)$ as $t\to 0$,  and $\ln^2(d)$ as $d \to +\infty$.
Thus as $d \to +\infty$ or $t\to 0$,  or equivalently $\beta \to 1$ or $\alpha \to 1$,  we can directly apply the result of Theorem~\ref{thm:mainS03}.

If now $1\leq t\to +\infty$,  then $-1 <\alpha\leq 0$,  and we need to replace $\alpha$ by $-\alpha$ in the computation of the area,  and we get by Theorem~\ref{thm:mainS03} that $\ln^2(1+\alpha) = \ln^2\big(\frac{2}{1+t}\big)=\ln^2\big(\frac{1+t}{2}\big)$, which grows like $\ln^2(t)$ as $t \to +\infty$.

Similarly,  if $1\geq d\to 0$,  then $-1 <\beta\leq 0$,  and we need to replace $\beta$ by $-\beta$,  and we get by Theorem~\ref{thm:mainS03} that $\ln^2(1+\beta) = \ln^2\big(\frac{2d}{1+d}\big)$, which grows like $\ln^2(d)$ as $d \to 0$.
Combining all the four possible cases completes the proof.
\end{proof}

\subsection{Outline of proof} We will break down the proof into several steps to improve readability.
The main work lies in the proof of Step 3.\\

\textbf{Step 1:} 
Let $T_\alpha$ (respectively $T_\beta$) denote the triangle contained in $Q_{\alpha,\beta}$ cut out by the lines $y=\pm x$ and $y=\alpha$ (respectively $x=\beta$),  see Figure~\ref{fig: Ta and Tb}. 
\begin{lemma}\label{lem:Talpha and Tbeta}
We have
\[
\lim_{\alpha\to 1}\frac{\int_{T_\alpha} A(x,y)dxdy}{\ln^2(1-\alpha)}=
\lim_{\beta\to 1}\frac{\int_{T_\beta} A(x,y)dxdy}{\ln^2(1-\beta)}=\frac{3}{8}.
\]
\end{lemma}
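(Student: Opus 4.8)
The plan is to reduce each of the two statements to a one-dimensional integral and then extract the asymptotics via L'H\^opital's rule. I would treat $T_\alpha$ first; the case of $T_\beta$ then follows from the symmetry $A_{Q_\st}(x,y)=A_{Q_\st}(y,x)$ of the integrand.

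First I would identify the region explicitly. By definition $T_\alpha$ is bounded by the lines $y=x$, $y=-x$ and $y=\alpha$, so (using $0\le\alpha<1$) it is the triangle with vertices $(0,0)$, $(\alpha,\alpha)$, $(-\alpha,\alpha)$, i.e.\ the set $\{(x,y):|x|\le y\le\alpha\}$. On this region $\max\{|x|,|y|\}=y$, so by Proposition~\ref{prop: quad area} the integrand equals $A(x,y)=\frac{2+y}{2(1-x^2)(1-y^2)}$. Integrating in $x$ first, via $\int_{-y}^{y}\frac{dx}{1-x^2}=\ln\frac{1+y}{1-y}$, reduces the double integral to a single integral:
\[
\int_{T_\alpha}A(x,y)\,dx\,dy=\int_0^\alpha g(y)\,dy,\qquad g(y)\coloneqq\frac{2+y}{2(1-y^2)}\ln\frac{1+y}{1-y}.
\]

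Next I would analyse the limit $\alpha\to1^-$. Both $\int_0^\alpha g(y)\,dy$ and $\ln^2(1-\alpha)$ diverge to $+\infty$: the former because $g>0$ and, near $y=1$, $g(y)\sim\frac{-3\ln(1-y)}{4(1-y)}$, whose antiderivative's leading term is exactly $\frac38\ln^2(1-y)$. Both quantities are differentiable on $[0,1)$ with the denominator's derivative nonvanishing near $1$, so L'H\^opital applies. By the fundamental theorem of calculus the ratio of derivatives is
\[
\frac{g(\alpha)}{\frac{d}{d\alpha}\ln^2(1-\alpha)}
=\frac{\frac{2+\alpha}{2(1-\alpha^2)}\ln\frac{1+\alpha}{1-\alpha}}{\frac{-2\ln(1-\alpha)}{1-\alpha}}
=\frac{2+\alpha}{2(1+\alpha)}\cdot\frac{\ln\frac{1+\alpha}{1-\alpha}}{-2\ln(1-\alpha)}.
\]
As $\alpha\to1^-$ the first factor tends to $\tfrac34$, and since $\ln\frac{1+\alpha}{1-\alpha}=\ln(1+\alpha)-\ln(1-\alpha)\sim-\ln(1-\alpha)$ the second factor tends to $\tfrac12$; hence the limit is $\tfrac38$, as claimed.

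Finally, for $T_\beta$ I would invoke the symmetry $A_{Q_\st}(x,y)=A_{Q_\st}(y,x)$: the reflection $(x,y)\mapsto(y,x)$ carries $T_\beta=\{|y|\le x\le\beta\}$ to $\{|x|\le y\le\beta\}$, so $\int_{T_\beta}A$ equals the integral just computed with $\alpha$ replaced by $\beta$, giving the same limit $\tfrac38$. The only genuinely delicate point is verifying the hypotheses of L'H\^opital (divergence of the numerator and the nonvanishing of the denominator's derivative near $\alpha=1$), but both are immediate from the positivity of $g$ and its explicit singular behaviour; the remainder is a routine one-variable computation.
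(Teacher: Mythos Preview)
Your proof is correct. Both you and the paper begin identically: identify $T_\alpha=\{|x|\le y\le\alpha\}$, note that $\max\{|x|,|y|\}=y$ there, and integrate in $x$ to reduce to the single integral $\int_0^\alpha g(y)\,dy$ with $g(y)=\frac{2+y}{2(1-y^2)}\ln\frac{1+y}{1-y}$. From that point the two approaches diverge. The paper carries out an explicit antiderivative via integration by parts (using $\bigl(\ln\tfrac{1+y}{1-y}\bigr)'=\tfrac{2}{1-y^2}$), arriving at a closed-form expression involving the dilogarithm, from which the leading term $\tfrac34\ln^2(1-\alpha)$ (hence the limit $\tfrac38$ after the factor $\tfrac12$) can be read off. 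You instead apply L'H\^opital's rule directly, differentiating numerator and denominator; this is shorter and avoids the dilogarithm entirely. The paper's route has the advantage of producing a formula $f(\alpha)$ that is reused verbatim for $T_\gamma$ in the proof of Lemma~\ref{lem:Q'}; your L'H\^opital argument, being purely asymptotic, would need to be repeated there (but it adapts immediately, since one only needs $\lim_{\gamma\to1}\ln(1-\gamma)/\ln(1-\max\{\alpha,\beta\})=1$). Your symmetry argument for $T_\beta$ is also cleaner than the paper's, which reruns the computation.
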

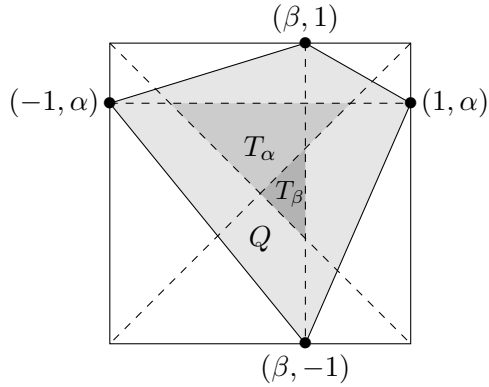
\begin{figure}[h]
\begin{tikzpicture}[scale=2]
\draw (-1,-1) -- (1,-1) -- (1,1) -- (-1,1)-- cycle;
\filldraw[fill=gray!20] (1,.6) -- (.3,1) -- (-1,.6) -- (.3, -1) --cycle;
\fill[fill=gray!40] (0,0)-- (.6,.6) -- (-.6,.6) --cycle;
\fill[fill=gray!60] (0,0) -- (.3,.3) -- (.3,-.3) --cycle;
\draw (1,.6) node{$\bullet$};
\draw (1,.6) node[right]{$(1,\alpha)$};
\draw (.3,1) node{$\bullet$};
\draw (.3,1) node[above]{$(\beta,1)$};
\draw (-1,.6) node{$\bullet$};
\draw (-1,.6) node[left]{$(-1,\alpha)$};
\draw (.3,-1) node{$\bullet$};
\draw (.3,-1) node[below]{$(\beta,-1)$};
\draw (0,-.3) node{$Q$};
\draw[dashed] (-1,.6)--(1,.6);
\draw[dashed] (-1,1)--(1,-1);
\draw[dashed] (1,1)--(-1,-1);
\draw[dashed] (.3,1)--(.3,-1);
\draw (0,.3) node{$T_\alpha$};
\draw (.2,0) node{\small $T_\beta$};
\end{tikzpicture}
\caption{The triangles $T_\alpha$ and $T_\beta$ cut out by the lines $y=\pm x$ and $y=\alpha$, respectively $x=\beta$. }
\label{fig: Ta and Tb}
\end{figure}

\textbf{Step 2.} 
Let $(\gamma,\gamma)$ be the intersection of the line $y=x$ and the line passing through $(1,\alpha)$ and $(\beta,1)$.
We have
\[
\gamma=\gamma_{\alpha,\beta}=\frac{1-\alpha\beta}{2-\alpha-\beta}\geq\max\{\alpha,\beta\}.
\]
Denote by $Q'=Q_{\alpha,\beta}'$ the square with vertices $(\pm\gamma,\pm \gamma)$ and $(\pm\gamma,\mp\gamma)$ as in Figure~\ref{fig: Q' and Deltai}. 
\begin{lemma}\label{lem:Q'} 
We have
\[
\lim_{\max\{\alpha,\beta\}\to 1}\frac{\int_{Q_{\alpha,\beta}'} A(x,y)dxdy}{\ln^2(1-\max\{\alpha,\beta\})}=\frac{3}{2}.
\]
\end{lemma}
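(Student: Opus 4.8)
The plan is to exploit the fact that $Q'_{\alpha,\beta}$ is nothing but the axis-aligned square $[-\gamma,\gamma]^2$, i.e.\ a rescaled copy of the standard quadrilateral, so that integrating $A=A_{Q_\st}$ over it reduces to a one-dimensional integral. Since the integrand $A(x,y)=\frac{2+\max\{|x|,|y|\}}{2(1-x^2)(1-y^2)}$ is invariant under $x\mapsto-x$, under $y\mapsto -y$, and under $x\leftrightarrow y$, I would first reduce the integral over $[-\gamma,\gamma]^2$ to four times the integral over the positive quadrant $[0,\gamma]^2$, and then, splitting along the diagonal $y=x$ where $\max\{x,y\}$ switches value, to
\[
\int_{Q'_{\alpha,\beta}}A(x,y)\,dx\,dy=4\int_0^\gamma\frac{(2+x)\arctanh(x)}{1-x^2}\,dx,
\]
using $\int_0^x(1-y^2)^{-1}\,dy=\arctanh(x)$ for the inner integration.

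The next step is to extract the leading singularity of this one-dimensional integral as $\gamma\to 1$. Writing $\arctanh(x)=\tfrac12\ln(1+x)-\tfrac12\ln(1-x)$ and $\frac{1}{1-x^2}=\frac{1}{(1-x)(1+x)}$, the integrand splits into a piece carrying only a simple $\frac{1}{1-x}$ pole (whose integral is $O(|\ln(1-\gamma)|)$) and the genuinely singular piece $h(x)\,\frac{-\ln(1-x)}{1-x}$ with $h(x)=\frac{2+x}{2(1+x)}$ smooth on $[0,1]$ and $h(1)=\tfrac34$. Replacing $h(x)$ by $h(1)$ introduces only a convergent error, because $h(x)-h(1)=O(1-x)$ cancels the pole and leaves the integrable factor $-\ln(1-x)$, while $\int_0^\gamma\frac{-\ln(1-x)}{1-x}\,dx=\tfrac12\ln^2(1-\gamma)$ exactly after the substitution $u=1-x$. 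This yields
\[
\int_{Q'_{\alpha,\beta}}A=\tfrac32\ln^2(1-\gamma)+O\big(|\ln(1-\gamma)|\big),\qquad \gamma\to 1.
\]

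The final and most delicate step is to replace $\ln(1-\gamma)$ by $\ln(1-\max\{\alpha,\beta\})$. Setting $a=1-\alpha$ and $b=1-\beta$, the explicit formula for $\gamma$ gives the clean identity
\[
1-\gamma=\frac{ab}{a+b},
\]
whereas $1-\max\{\alpha,\beta\}=\min\{a,b\}$. This is where care is needed: the hypothesis $\max\{\alpha,\beta\}\to1$ only forces $\min\{a,b\}\to 0$ and says nothing about the other variable, so I would handle uniformly the case where $\max\{a,b\}$ stays bounded away from $0$ and the case where it too tends to $0$. In both, from $\max\{a,b\}\le a+b\le 2\max\{a,b\}$ one gets $\ln(a+b)=\ln(\max\{a,b\})+O(1)$, whence $\ln(1-\gamma)=\ln a+\ln b-\ln(a+b)=\ln(\min\{a,b\})+O(1)$; since $\ln(\min\{a,b\})\to-\infty$ this gives $\ln(1-\gamma)\sim\ln(1-\max\{\alpha,\beta\})$ and therefore $\ln^2(1-\gamma)/\ln^2(1-\max\{\alpha,\beta\})\to 1$. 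Combining this with the previous step yields the claimed limit $\tfrac32$.

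I expect this last logarithmic comparison to be the main obstacle, not because it is deep but because it is the only place where the unconstrained approach of the second parameter to its boundary value must be controlled; everything before it is a direct symmetrization followed by a standard singularity extraction.
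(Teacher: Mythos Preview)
Your proof is correct and follows essentially the same route as the paper: reduce by the symmetries of $A$ to the one-dimensional integral over a triangle $T_\gamma$ (the paper's $f(\gamma)$, for which an explicit dilogarithm antiderivative was already computed in the proof of Lemma~\ref{lem:Talpha and Tbeta}), identify its $\tfrac34\ln^2(1-\gamma)$ leading term, and then compare $\ln(1-\gamma)$ with $\ln(1-\max\{\alpha,\beta\})$. Your direct singularity extraction bypasses the dilogarithm computation, and your handling of the final comparison via the identity $1-\gamma=\frac{ab}{a+b}$ is more explicitly uniform in the unconstrained variable than the paper's one-line assertion of the two separate limits.
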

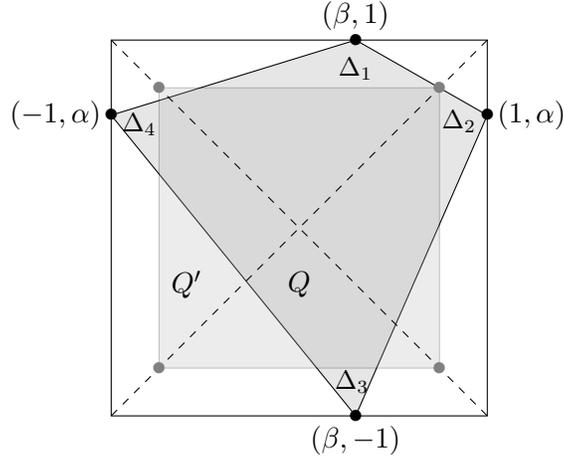
\begin{figure}[h]
\begin{tikzpicture}[scale=2.5]
\draw (-1,-1) -- (1,-1) -- (1,1) -- (-1,1)-- cycle;
\filldraw[fill=gray!20] (1,.6) -- (.3,1) -- (-1,.6) -- (.3, -1) --cycle;
\filldraw[fill=gray!60, nearly transparent] (0.74545454545,0.74545454545) -- (0.74545454545,-0.74545454545)--(-0.74545454545,-0.74545454545)--(-0.74545454545,0.74545454545) --cycle;
\draw (1,.6) node{$\bullet$};
\draw (1,.6) node[right]{$(1,\alpha)$};
\draw (.3,1) node{$\bullet$};
\draw (.3,1) node[above]{$(\beta,1)$};
\draw (-1,.6) node{$\bullet$};
\draw (-1,.6) node[left]{$(-1,\alpha)$};
\draw (.3,-1) node{$\bullet$};
\draw (.3,-1) node[below]{$(\beta,-1)$};
\draw (0,-.3) node{$Q$};
\draw (-.6,-.3) node{$Q'$};
\draw[dashed] (-1,1)--(1,-1);
\draw[dashed] (1,1)--(-1,-1);
\draw (0.74545454545,0.74545454545) node[gray]{$\bullet$};
\draw (-0.74545454545,0.74545454545) node[gray]{$\bullet$};
\draw (0.74545454545,-0.74545454545) node[gray]{$\bullet$};
\draw (-0.74545454545,-0.74545454545) node[gray]{$\bullet$};
\draw (.3,.85) node{\small $\Delta_1$};
\draw (-.85,.55) node{\small $\Delta_4$};
\draw (.28,-.82) node{\small $\Delta_3$};
\draw (.85,.57) node{\small $\Delta_2$};
\end{tikzpicture}
\caption{The square $Q'$ with vertices $(\pm\gamma,\pm \gamma)$ and $(\pm\gamma,\mp\gamma)$ and the connected components $\Delta_i$ of $Q \setminus Q'$. }\label{fig: Q' and Deltai}
\end{figure}

\textbf{Step 3.} Finally, denote by $\Delta_i=\Delta_{i,\alpha,\beta}$ for $i=1,2,3,4$ the connected components of $Q \setminus Q'$,  see Figure~\ref{fig: Q' and Deltai}. 
Note that these are four triangles since $\gamma\geq \max\{\alpha,\beta\}$.

\begin{proposition}\label{lem:Deltai} 
For all $i=1,2,3,4$ we have
\[
\lim_{\max\{\alpha,\beta\}\to 1}\frac{\int_{\Delta_i} A(x,y)dxdy}{\ln^2(1-\max\{\alpha,\beta\})}=0.
\]
\end{proposition}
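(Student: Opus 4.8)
The plan is to reduce the statement for all four triangles to a single explicit one–variable integral and then estimate it. First I would exploit symmetry: the integrand $A_{Q_\st}(x,y)=\frac{2+\max\{|x|,|y|\}}{2(1-x^2)(1-y^2)}$ is invariant under $x\mapsto -x$, $y\mapsto -y$ and under the reflection $(x,y)\mapsto(y,x)$, and this last reflection carries $Q_{\alpha,\beta}$ to $Q_{\beta,\alpha}$. Together these symmetries permute $\Delta_1,\dots,\Delta_4$ while exchanging the roles of $\alpha$ and $\beta$, so it suffices to bound $\int_{\Delta_1}A$, where $\Delta_1$ is the top triangle with ideal vertex $(\beta,1)$, base on the line $y=\gamma$, and the two remaining vertices $(\gamma,\gamma)$ and $(x_L(\gamma),\gamma)$. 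On $\Delta_1$ one has $x_R(y)\le\gamma<y$ and $x_L(y)\ge-\gamma>-y$, so $\max\{|x|,|y|\}=y$ and the integrand simplifies to $\frac{2+y}{2(1-x^2)(1-y^2)}$.

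Next I would parametrize $\Delta_1=\{\gamma\le y\le 1,\ x_L(y)\le x\le x_R(y)\}$, where the two edges through $(\beta,1)$ give the affine functions $x_R(y)=\beta+(1-\beta)\tfrac{1-y}{1-\alpha}$ and $x_L(y)=\beta-(1+\beta)\tfrac{1-y}{1-\alpha}$. Integrating first in $x$ via $\int\frac{dx}{1-x^2}=\arctanh x$ turns the area into $\int_\gamma^1\frac{2+y}{2(1-y^2)}\bigl(\arctanh x_R(y)-\arctanh x_L(y)\bigr)\,dy$. The substitution $\delta=\frac{1-y}{1-\alpha}$, which runs from $0$ at the ideal vertex to $\delta_{\max}=\frac{1-\beta}{2-\alpha-\beta}$ at the base, makes the logarithmic factor explicit: using $1-x_R=(1-\beta)(1-\delta)$ and $1+x_L=(1+\beta)(1-\delta)$, the bracket equals $\tfrac12\ln\frac{(1+\delta^2)+M\delta}{(1-\delta)^2}$ with $M=\frac{2(1+\beta^2)}{1-\beta^2}$. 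Since $(1-\alpha)\delta\le 1-\max\{\alpha,\beta\}\to 0$, the prefactor behaves like $\tfrac{3}{4\delta}$, and up to lower-order terms the integral collapses to $\tfrac38\int_0^{\delta_{\max}}\frac{\ln(1+M\delta)}{\delta}\,d\delta=-\tfrac38\,\dil(-M\delta_{\max})$, a clean dilogarithm, where $M\delta_{\max}=\frac{2(1+\beta^2)}{(1+\beta)(2-\alpha-\beta)}$.

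The final and most delicate step is the asymptotic comparison. Invoking $-\dil(-X)\sim\tfrac12\ln^2 X$ as $X\to\infty$ (the dilogarithm identities recorded in Section~\ref{sec:FuchsianLocus}), I obtain $\int_{\Delta_1}A=O\!\bigl(\ln^2(2-\alpha-\beta)\bigr)$, and the same bound for every $\Delta_i$ by symmetry. Writing $2-\alpha-\beta=(1-\alpha)+(1-\beta)$ and $1-\max\{\alpha,\beta\}=\min\{1-\alpha,1-\beta\}$, the claim is then exactly that $\frac{\ln^2\!\bigl((1-\alpha)+(1-\beta)\bigr)}{\ln^2\!\bigl(\min\{1-\alpha,1-\beta\}\bigr)}\to 0$ as $\max\{\alpha,\beta\}\to1$. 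I expect this comparison to be the main obstacle: the whole $\ln^2$ in the dilogarithm is produced by the genuine double singularity of $A$ at the base corner $(\gamma,\gamma)\to(1,1)$, so the heart of the matter is quantifying how much the thin caps $\Delta_i$ feel that corner relative to the central square $Q'$ treated in Lemma~\ref{lem:Q'}. The finiteness of the total integral (via Marquis, together with Remarks~\ref{rem: volume bilipschitz} and~\ref{rem: volume decreasing}) guarantees the dilogarithm expression is well defined, and the decay rate of $2-\alpha-\beta$ against that of $1-\max\{\alpha,\beta\}$ is precisely what must be extracted to close the estimate.
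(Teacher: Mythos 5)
Your reduction of the problem and the exact evaluation of the inner integral are correct, and your dilogarithm asymptotics are sharper than anything in the paper; but the step you defer to the end is not a technical loose end --- it is false, so the proposal cannot be completed. The comparison you need, namely that $\ln^2\bigl((1-\alpha)+(1-\beta)\bigr)/\ln^2\bigl(\min\{1-\alpha,1-\beta\}\bigr)\to 0$ as $\max\{\alpha,\beta\}\to 1$, fails on the diagonal: for $\alpha=\beta\to 1$ this ratio is $\ln^2\bigl(2(1-\alpha)\bigr)/\ln^2(1-\alpha)\to 1$. Worse, your own formula shows the obstruction is real and not an artifact of a generous upper bound: for $\alpha=\beta$ one has $M\delta_{\max}=\tfrac{1+\alpha^2}{1-\alpha^2}\sim\tfrac{1}{1-\alpha}$, hence
\[
\int_{\Delta_1}A(x,y)\,dx\,dy=\Bigl(\tfrac38+o(1)\Bigr)\bigl(-\dil(-M\delta_{\max})\bigr)+O(1)\sim\tfrac{3}{16}\,\ln^2(1-\alpha),
\]
so that $\int_{\Delta_1}A\big/\ln^2\bigl(1-\max\{\alpha,\beta\}\bigr)\to\tfrac{3}{16}\neq 0$ along this path. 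In other words, carried to its conclusion your computation \emph{disproves} the statement when the limit is read as a joint limit in $(\alpha,\beta)$: the contribution of $\Delta_1$ (and, by the swap symmetry, of $\Delta_2$) is not negligible when $1-\alpha$ and $1-\beta$ are comparable. A separate, smaller inaccuracy: the reflections do not permute the $\Delta_i$, since $Q_{\alpha,\beta}$ is not symmetric under $(x,y)\mapsto(x,-y)$ or $(x,y)\mapsto(-x,y)$; one only gets containments such as $\Delta_3\subseteq R_x(\Delta_1)$ (valid because $\alpha,\beta\geq 0$), which is what is actually needed.

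The paper's route is different and more modest: it splits $\Delta_1$ along the segment $x=\beta$ into $\Delta_1^r$ and $\Delta_1^\ell$, bounds the contribution of $\Delta_1^r$ by a secant/convexity estimate for $y\mapsto\arctanh\bigl(\beta+\tfrac{1-\beta}{1-\alpha}(1-y)\bigr)$, yielding $\arctanh(\gamma)-\arctanh(\beta)$, and bounds $\Delta_1^\ell$ using $\ln(1+z)\leq z$, yielding $\tfrac{2}{1-\beta}\bigl(\ln(2-\alpha-\beta)-\ln(1-\alpha)\bigr)$. These bounds are then compared to $\ln^2(1-\alpha)$ as $\alpha\to 1$ with $\beta$ fixed, and to $\ln^2(1-\beta)$ as $\beta\to 1$ with $\alpha$ fixed; note that the $\Delta_1^\ell$ bound is of order $\tfrac{1}{1-\beta}$ on the diagonal, where it says nothing. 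So the paper, too, only establishes the two one-parameter limits, and your sharper analysis shows that this restriction is essential rather than a convenience: the uniform joint limit asserted in Proposition \ref{lem:Deltai} (and hence the limit statements in Theorem \ref{thm:mainS03} and Corollary \ref{corollary: estimate}) can only hold in this one-variable-at-a-time sense. The lower bound needed for Theorem \ref{thm: intro: S03} survives, since the $\Delta_i$ contribute nonnegatively and Lemma \ref{lem:Q'} alone already gives positivity of the liminf.
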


\begin{proof}[Proof of Theorem \ref{thm:mainS03}] The proof follows at once by combining the statements of Lemmas \ref{lem:Talpha and Tbeta}, \ref{lem:Q'}, and Proposition \ref{lem:Deltai}.
\end{proof}

\subsection{Proof of Lemma \ref{lem:Talpha and Tbeta}} 
Recall the region $T_\alpha$ cut out by the lines $y=\pm x$ and $y=\alpha$.
Note that in the region $T_\alpha$ we have $y\geq |x|$,  and thus, up to a factor of 2, our goal is to compute 
\[
\int_{T_\alpha}\frac{2+y}{(1-y^2)(1-x^2)}dxdy.
\]
Parametrizing $T_{\alpha}=\{(x,y) \in Q_\st \mid -y\leq x\leq y,\ 0\leq y\leq \alpha\}$, we have
\[
\int_0^\alpha \frac{2+y}{1-y^2}\left(\int_{-y}^y\frac{1}{1-x^2}dx \right)dy=\frac{1}{2}\int_0^\alpha\frac{2+y}{1-y^2}\Big[\ln\left(\frac{1+x}{1-x}\right)\Big]\Big\vert_{-y}^ydy=\int_0^\alpha\frac{2+y}{1-y^2}\ln\left(\frac{1+y}{1-y}\right)dy \eqqcolon f(\alpha).
\]
We can compute,  again noting that $\ln\big(\frac{1+y}{1-y}\big)' = \frac{2}{1-y^2}$ and doing an integration by parts, that
\begin{align*}
f(\alpha)&=\left[\frac{2+y}{4}\ln^2\left(\frac{1+y}{1-y}\right)\right]\Bigg|_{0}^\alpha - \frac{1}{4} \int_0^\alpha \ln^2\left(\frac{1+y}{1-y}\right)dy\\
&= \left[\frac{2+y}{4}\ln^2\left(\frac{1+y}{1-y}\right)\right]\Bigg|_{0}^\alpha - \frac{1}{4} \int_0^\alpha\left( \ln^2(1+y)+\ln^2(1-y)-2\ln(1+y)\ln(1-y)\right)dy.
\end{align*}
The integrals of $\ln^2(1+y)$ and $\ln^2(1-y)$ can be computed by first doing a change of variables setting $x=1+y$ respectively $x=1-y$,  and then doing an integration by parts using the constant function $1$.
The integral of $\ln(1+y)\ln(1-y)$ is slightly more delicate,  but can be solved similarly: first we do a change of variables setting $x=1+y$,  then we do an integration by parts integrating $\ln(x)$.
Left is then to compute an integral of the form $\int \frac{x}{2-x}\ln(x)dx$. 
For this we first set $u =2-x$ to simplify further and to obtain an integral of the form $\int \frac{2}{u}\ln(2-u)du$.  
Now setting $v=\frac{u}{2}$ and recalling that $\dil(z)=-\int_0^z \frac{1}{t} \ln(1-t) dt$ solves also this last integral.
Putting everything together,  this gives
\begin{align*}
f(\alpha)
&=\dil\left(\frac{1-\alpha}{2}\right)-\frac{\pi^2}{12}+\frac{\ln^2(2)}{2}\\
&\qquad+\frac{1}{4} \Big(\ln ^2(1+\alpha)-4 \ln (2) \ln\left(1-\alpha\right)+ 3\ln^2 (1-\alpha)-2 \ln(1-\alpha)\ln (1+\alpha)\Big).
\end{align*}
Using that $\dil(0)=0$,  one directly sees that
\[
\lim_{\alpha\to 1}\frac{f(\alpha)}{\ln^2(1-\alpha)}=\frac{3}{4}.
\]
Now, note that in the region $T_\beta$ we have $x\geq |y|$,  and thus we want to compute
\[
\int_{T_\beta}\frac{2+x}{(1-y^2)(1-x^2)}dxdy.
\]
We can parametrize $T_{\beta}=\{(x,y)\in Q_\st \mid -x\leq y\leq x,\ 0\leq x\leq \beta\}$, so the same computation as above gives 
\[
\int_{T_\beta}\frac{2+x}{(1-y^2)(1-x^2)}dxdy \eqqcolon f(\beta),  \textnormal{ and }
\lim_{\beta\to 1}\frac{f(\beta)}{\ln^2(1-\beta)}=\frac{3}{4}.
\]

\subsection{Proof of Lemma \ref{lem:Q'}} Note that by symmetry and up to a factor of 2, it suffices to compute
\[
\int_{T_\gamma}\frac{2+y}{(1-y^2)(1-x^2)}dxy,
\]
where $T_\gamma$ is the region cut out by the lines $y=\pm x$ and $y=\gamma=\frac{1-\alpha\beta}{2-\alpha-\beta}$.
\begin{figure}[h]
\begin{tikzpicture}[scale=2]
\draw (-1,-1) -- (1,-1) -- (1,1) -- (-1,1)-- cycle;
\filldraw[fill=gray!20] (1,.6) -- (.3,1) -- (-1,.6) -- (.3, -1) --cycle;
\fill[fill=gray!40] (0,0)-- (0.74545454545,0.74545454545) -- (-0.74545454545,0.74545454545) --cycle;
\draw (1,.6) node{$\bullet$};
\draw (1,.6) node[right]{$(1,\alpha)$};
\draw (.3,1) node{$\bullet$};
\draw (.3,1) node[above]{$(\beta,1)$};
\draw (-1,.6) node{$\bullet$};
\draw (-1,.6) node[left]{$(-1,\alpha)$};
\draw (.3,-1) node{$\bullet$};
\draw (.3,-1) node[below]{$(\beta,-1)$};
\draw (0,-.3) node{$Q$};
\draw[dashed] (-1,0.74545454545)--(1,0.74545454545);
\draw[dashed] (-1,1)--(1,-1);
\draw[dashed] (1,1)--(-1,-1);
\draw (0,.3) node{$T_\gamma$};
\end{tikzpicture}
\caption{The triangle $T_\gamma$ cut out by the lines $y=\pm x$ and $y=\gamma$. }
\end{figure}
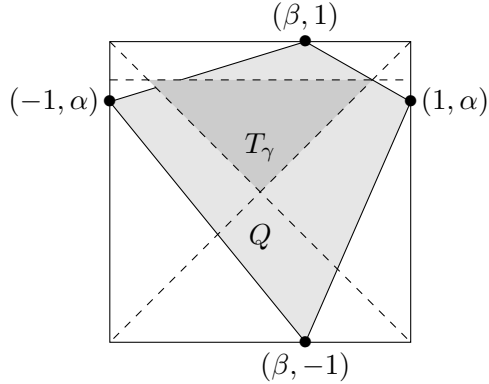

We can parametrize $T_{\gamma}=\{(x,y) \in Q_\st\mid -y\leq x\leq y,\ 0\leq x\leq \gamma\}$, so the same computation as in the proof of Lemma~\ref{lem:Talpha and Tbeta} gives 
\[
\int_{T_\gamma}\frac{2+y}{(1-y^2)(1-x^2)}dxdy \eqqcolon f(\gamma).
\]
We wish to compute
\[
\lim_{\max\{\alpha,\beta\}\to 1}\frac{f(\gamma)}{\ln^2(1-\max\{\alpha,\beta\})}.
\]
Since $f(\gamma)$ grows like $\ln^2(1-\gamma)$, it suffices to notice that 
\[
\lim_{\alpha\to 1}\frac{\ln(1-\gamma)}{\ln(1-\alpha)}=\lim_{\beta\to 1}\frac{\ln(1-\gamma)}{\ln(1-\beta)}=1.
\]

\subsection{Proof of Proposition \ref{lem:Deltai}}
Finally, note that $Q \setminus Q'$ consists of four disjoint triangles $\Delta_1,\dots,\Delta_4$ (we are omitting the dependence of these regions on $\alpha$ and $\beta$), each with a vertex on an edge of $Q$ and a side parallel to a coordinate axis.

\begin{proposition} For every $i=1,2,3,4$ we have
\[
\lim_{\alpha\to 1}\frac{\int_{\Delta_i} A(x,y)dxdy}{\ln^2(1-\alpha)}=\lim_{\beta\to 1}\frac{\int_{\Delta_i} A(x,y)dxdy}{\ln^2(1-\beta)}=0.
\]
\end{proposition}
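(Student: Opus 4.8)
The plan is to prove the stronger fact that, in each of the two limits, the integral $\int_{\Delta_i}A\,dx\,dy$ remains \emph{bounded}; since $\ln^2(1-\alpha)\to\infty$ (resp.\ $\ln^2(1-\beta)\to\infty$), the quotients then go to $0$. By the symmetries $A(x,y)=A(-x,y)=A(x,-y)=A(y,x)$ from Proposition~\ref{prop: quad area}, and because $(x,y)\mapsto(y,x)$ sends the configuration $Q_{\alpha,\beta}$ to $Q_{\beta,\alpha}$, it suffices to bound the integral over the single triangle $\Delta_2$ adjacent to the vertex $(1,\alpha)$: the reflection $x\mapsto-x$ yields $\Delta_4$, the transposition $(x,y)\mapsto(y,x)$ together with the exchange $\alpha\leftrightarrow\beta$ yields $\Delta_1$, and their composition yields $\Delta_3$. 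Under this reduction the two limits of the statement correspond, for $\Delta_2$, to (A) $\beta\to1$ with $\alpha$ fixed, and (B) $\alpha\to1$ with $\beta$ fixed.

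I first record the shape of $\Delta_2$. Intersecting the side of $Q_{\alpha,\beta}$ through $(1,\alpha)$ and $(\beta,-1)$ with the line $x=\gamma$ gives the third vertex $(\gamma,y_L)$ with $y_L=\alpha-\frac{1-\alpha^2}{2-\alpha-\beta}$, so $\Delta_2$ is the triangle with vertices $(1,\alpha)$, $(\gamma,\gamma)$ and $(\gamma,y_L)$, its side on $x=\gamma$ being the one parallel to an axis. On $\Delta_2$ one has $x\ge\gamma\ge|y|$, hence $\max\{|x|,|y|\}=x$ and $A\le\frac{3}{2(1-x^2)(1-y^2)}$, so it is enough to bound $\int_{\Delta_2}\frac{dx\,dy}{(1-x^2)(1-y^2)}$. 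The two slanted sides lie on lines through the ideal apex $(1,\alpha)$; after the rescaling $r=\frac{1-x}{1-\gamma}\in[0,1]$ they become the linear graphs $y_+(r)=\alpha+(\gamma-\alpha)r$ and $y_-(r)=\alpha+(y_L-\alpha)r$, and integrating $y$ out turns the integral, up to a factor tending to $\tfrac12$, into $\int_0^1\frac1r\big(\arctanh y_+(r)-\arctanh y_-(r)\big)\,dr$.

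For limit (A) I would invoke dominated convergence in this last integral. The linear edges give the $\beta$-independent lower bounds $1-y_+(r)\ge(1-\alpha)(1-r)$ and $1+y_-(r)\ge(1+\alpha)(1-r)$, whence $\arctanh y_+(r)-\arctanh y_-(r)$ is dominated by a function that is bounded for $r\le\tfrac12$ (there $y_\pm$ stay away from $\pm1$) and only logarithmically singular at $r=1$; this dominating function is integrable and independent of $\beta$, and the integrand converges pointwise as $\gamma\to1$, $y_L\to-1$, so $\int_{\Delta_2}A\,dx\,dy$ tends to a finite constant. For limit (B) the triangle $\Delta_2$ instead collapses into the corner $(1,1)$, all three vertices tending to it; here I would rescale both coordinates by $1-\alpha$, setting $(1-x,1-y)=(1-\alpha)(\xi,\zeta)$. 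Then $\Delta_2$ converges to the fixed triangle $T$ with vertices $(0,1)$, $(1,1)$, $(1,\tfrac{3-\beta}{1-\beta})$, while $A\,dx\,dy\to\frac{3}{8\,\xi\zeta}\,d\xi\,d\zeta$, so $\int_{\Delta_2}A\,dx\,dy\to\frac38\int_T\frac{d\xi\,d\zeta}{\xi\zeta}=-\frac38\dil\!\big(-\tfrac{2}{1-\beta}\big)$, again finite. In both cases boundedness gives the claimed limit $0$.

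The main obstacle, common to both limits, is that along its axis-parallel side the triangle $\Delta_2$ runs into the corners $(1,1)$ and $(1,-1)$ of $Q_\st$, where the factors $1-x^2$ and $1-y^2$ degenerate \emph{simultaneously}; no crude enclosure suffices, since already $\int_{\{\gamma\le x<1\}\cap Q_\st}\frac{dx\,dy}{(1-x^2)(1-y^2)}=\infty$. What rescues the estimate is that $\Delta_2$ pinches linearly toward its ideal apex $(1,\alpha)$, and this is exactly what the two rescalings exploit: in case (A) the linear pinching yields the $\beta$-uniform dominating function, and in case (B) it makes the rescaled triangle $T$ non-degenerate with the integrable density $\frac1{\xi\zeta}$. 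The only genuinely delicate point is to justify the dominated-convergence and region-convergence steps near these corners—i.e.\ that the cancellation between the $\arctanh y_+$ and $\arctanh y_-$ terms is controlled; the elementary partial-fraction and dilogarithm manipulations this requires are of the same type already used in the proofs of Lemmas~\ref{lem:Talpha and Tbeta} and~\ref{lem:Q'}.
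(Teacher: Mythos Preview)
Your approach is correct and in fact proves more than the paper does: you show that each $\int_{\Delta_i}A$ stays \emph{bounded} in the two limits, whereas the paper only shows that it is $o(\ln^2)$. The methods are genuinely different. The paper splits $\Delta_1$ along the vertical $x=\beta$ into $\Delta_1^r$ and $\Delta_1^\ell$; for $\Delta_1^r$ it uses a concavity argument to bound $\arctanh(\beta+\delta(1-y))-\arctanh(\beta)$ by its secant line, obtaining the explicit bound $2(\arctanh\gamma-\arctanh\beta)$, and for $\Delta_1^\ell$ it uses $\ln(1+z)\le z$ to get the bound $\tfrac{2}{1-\beta}\bigl(\ln(2-\alpha-\beta)-\ln(1-\alpha)\bigr)$. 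Both of these bounds are of order $|\ln(1-\alpha)|$ as $\alpha\to1$ and hence are not sharp; your dominated-convergence argument in case~(A) and your $(1-\alpha)$-rescaling in case~(B) detect the finer fact that the integrals actually converge. Your route is a little less elementary (it hides the work inside DCT and a change of variables rather than in two explicit inequalities) but yields a cleaner statement and identifies the limiting value $-\tfrac38\dil\!\bigl(-\tfrac{2}{1-\beta}\bigr)$ in case~(B).

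One point of caution: your symmetry reduction is exact for $\Delta_1$ (the transposition $(x,y)\mapsto(y,x)$ together with $\alpha\leftrightarrow\beta$ genuinely sends $\Delta_1(\alpha,\beta)$ to $\Delta_2(\beta,\alpha)$), but the reflection $x\mapsto-x$ does \emph{not} send $\Delta_2$ to $\Delta_4$, because $Q_{\alpha,\beta}$ is not invariant under $x\mapsto-x$ when $\beta\ne0$. What is true, and what the paper uses, is the containment $\Delta_4\subset R_y(\Delta_2)$ and $\Delta_3\subset R_x(\Delta_1)$ (valid since $\alpha,\beta\ge0$); this is enough for your purposes since you only need upper bounds. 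With that correction your argument goes through.
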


The idea is to divide every $\Delta_i$ further into two adjacent triangles,  $\Delta^r_i$ and $\Delta^l_i$.
This is illustrated in Figure~\ref{fig: Delta left and right} for $\Delta_1$.
\begin{figure}[h]
\begin{tikzpicture}[scale=3]
\draw (-1,-1) -- (1,-1) -- (1,1) -- (-1,1)-- cycle;
\filldraw[fill=gray!20] (1,.6) -- (.3,1) -- (-1,.6) -- (.3, -1) --cycle;
\filldraw[fill=gray!60, nearly transparent] (0.74545454545,0.74545454545) -- (0.74545454545,-0.74545454545)--(-0.74545454545,-0.74545454545)--(-0.74545454545,0.74545454545) --cycle;
\filldraw[fill=gray!60] (.3,1) -- (.3,0.74545454545) -- (0.74545454545,0.74545454545) -- cycle;
\filldraw[fill=gray!60] (.3,1) -- (.3,0.74545454545) -- (-0.52727272728,0.74545454545) -- cycle;
\draw (1,.6) node{$\bullet$};
\draw (1,.6) node[right]{$(1,\alpha)$};
\draw (.3,1) node{$\bullet$};
\draw (.3,1) node[above]{$(\beta,1)$};
\draw (-1,.6) node{$\bullet$};
\draw (-1,.6) node[left]{$(-1,\alpha)$};
\draw (.3,-1) node{$\bullet$};
\draw (.3,-1) node[below]{$(\beta,-1)$};
\draw (0,-.3) node{$Q$};
\draw (-.6,-.3) node{$Q'$};
\draw[dashed] (-1,1)--(1,-1);
\draw[dashed] (1,1)--(-1,-1);
\draw (0.74545454545,0.74545454545) node[gray]{$\bullet$};
\draw (-0.74545454545,0.74545454545) node[gray]{$\bullet$};
\draw (0.74545454545,-0.74545454545) node[gray]{$\bullet$};
\draw (-0.74545454545,-0.74545454545) node[gray]{$\bullet$};
\draw (.3,0.74545454545) node{$\bullet$};
\draw (0.74545454545,0.74545454545) node{$\bullet$};
\draw (-0.52727272728,0.74545454545) node{$\bullet$};
\draw (.4,.83) node{$\Delta_1^r$};
\draw (.3,.65) node{$(\beta,\gamma)$};
\draw (.85,.85) node{$(\gamma,\gamma)$};
\draw (.1,.83) node{$\Delta_1^l$};
\end{tikzpicture}
\caption{The triangles $\Delta_1^r$ and $\Delta_1^l$ contained in $\Delta_1$. }
\label{fig: Delta left and right}
\end{figure}
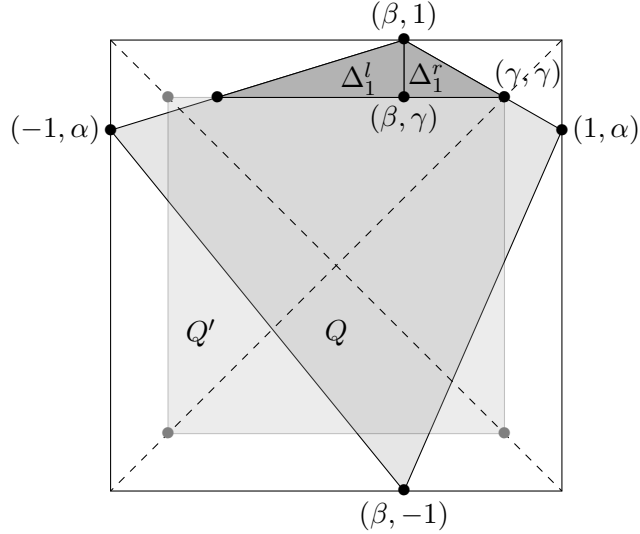
We start by considering the triangle $\Delta^r_1$ contained in $\Delta_1$ with vertices $(\beta,1)$, $(\beta,\gamma)$, and $(\gamma,\gamma)$. In particular, $y\geq |x|$ for all $(x,y)\in \Delta^r_1$.
Note that the line passing through $(\beta,1)$ and $(\gamma, \gamma)$ has equation 
\[
x=\beta+\frac{1-\beta}{1-\alpha}(1-y).
\]
\begin{proposition}
We have
\[
\lim_{\alpha\to 1}\frac{\int_{\Delta^r_1}\frac{2+y}{(1-x^2)(1-y^2)}dxdy}{\ln^2(1-\alpha)}=\lim_{\beta\to 1}\frac{\int_{\Delta^r_1}\frac{2+y}{(1-x^2)(1-y^2)}dxdy}{\ln^2(1-\beta)}=0.
\]
\end{proposition}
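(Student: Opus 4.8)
The plan is to estimate the integral by iterating in a way that tames the boundary singularity. Writing $x_{\max}(y)=\beta+\frac{1-\beta}{1-\alpha}(1-y)$ for the right edge of $\Delta_1^r$, the triangle is swept out by $\gamma\le y\le 1$ and $\beta\le x\le x_{\max}(y)$. Since $y\ge|x|$ on $\Delta_1^r$, the relevant integrand is $\frac{2+y}{(1-x^2)(1-y^2)}$, so I would first write
\[
\int_{\Delta_1^r}\frac{2+y}{(1-x^2)(1-y^2)}\,dx\,dy=\int_\gamma^1\frac{2+y}{1-y^2}\,J(y)\,dy,\qquad J(y)=\int_\beta^{x_{\max}(y)}\frac{dx}{1-x^2},
\]
and reduce everything to a good bound on the inner integral $J(y)$.

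The crucial step is to bound $J(y)$ by a quantity that vanishes \emph{linearly} as $y\to 1$. A naive estimate such as $J(y)\le\tfrac12\ln\frac{2(1-\alpha)}{y-\alpha}$ stays bounded away from $0$ at $y=1$, and combined with the $\frac{1}{1-y}$ factor coming from $\frac{1}{1-y^2}$ it yields a non-integrable tail; this is the main obstacle. I would instead use that $\frac{1}{1-x^2}$ is increasing on $[\beta,x_{\max}(y)]\subset[0,1)$, so $J(y)\le\frac{x_{\max}(y)-\beta}{1-x_{\max}(y)^2}$, together with the two elementary identities
\[
x_{\max}(y)-\beta=\frac{(1-\beta)(1-y)}{1-\alpha},\qquad 1-x_{\max}(y)=\frac{(1-\beta)(y-\alpha)}{1-\alpha},
\]
to obtain the key estimate $J(y)\le\frac{1-y}{y-\alpha}$. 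Here the factor $1-y$ is exactly what cancels the $1-y$ hidden in $1-y^2$.

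With this the outer integral collapses to an elementary logarithm. Indeed $\frac{2+y}{1-y^2}\cdot\frac{1-y}{y-\alpha}=\frac{2+y}{(1+y)(y-\alpha)}$, and using $\frac{2+y}{1+y}\le 2$ gives
\[
\int_{\Delta_1^r}\frac{2+y}{(1-x^2)(1-y^2)}\,dx\,dy\le 2\int_\gamma^1\frac{dy}{y-\alpha}=2\ln\frac{1-\alpha}{\gamma-\alpha}=2\ln\frac{2-\alpha-\beta}{1-\alpha},
\]
where the last equality uses $\gamma-\alpha=\frac{(1-\alpha)^2}{2-\alpha-\beta}$. To conclude, I would read off both limits from this single bound: as $\alpha\to 1$ with $\beta$ fixed it is asymptotic to $2\lvert\ln(1-\alpha)\rvert$, so dividing by $\ln^2(1-\alpha)$ gives a quantity of order $1/\lvert\ln(1-\alpha)\rvert\to 0$; as $\beta\to 1$ with $\alpha$ fixed the bound tends to $2\ln 1=0$, so the ratio with $\ln^2(1-\beta)$ vanishes trivially. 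Thus the only real work is the inner estimate $J(y)\le\frac{1-y}{y-\alpha}$, which exploits the linear shrinking of the $x$-interval near $y=1$ to defeat the boundary singularity.
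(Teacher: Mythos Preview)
Your argument is correct. The only slightly elided step is the passage from $J(y)\le\frac{x_{\max}(y)-\beta}{1-x_{\max}(y)^2}$ to $J(y)\le\frac{1-y}{y-\alpha}$: your two identities give $\frac{x_{\max}(y)-\beta}{1-x_{\max}(y)}=\frac{1-y}{y-\alpha}$, and one also uses $1+x_{\max}(y)\ge 1$ to drop the remaining factor. With that, everything checks: $\gamma-\alpha=\frac{(1-\alpha)^2}{2-\alpha-\beta}$ gives the bound $2\ln\frac{2-\alpha-\beta}{1-\alpha}$, and both limits follow as you say.

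This is a genuinely different route from the paper's. The paper evaluates the inner integral exactly as $F(y)=\arctanh\bigl(\beta+\tfrac{1-\beta}{1-\alpha}(1-y)\bigr)-\arctanh(\beta)$ and then proves an auxiliary lemma, via convexity of $F$, that bounds $F(y)$ above by the secant line $\frac{\arctanh(\gamma)-\arctanh(\beta)}{1-\gamma}(1-y)$; integrating against $\frac{1}{1-y}$ then yields the bound $\arctanh(\gamma)-\arctanh(\beta)$. You sidestep the convexity lemma entirely by replacing the integrand on the inner interval by its maximum, which directly produces the linear vanishing in $(1-y)$ with the elementary factor $\frac{1}{y-\alpha}$. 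Your approach is shorter and more elementary; amusingly, it is in exactly the same spirit as the paper's treatment of the companion triangle $\Delta_1^\ell$, and your final bound $2\ln\frac{2-\alpha-\beta}{1-\alpha}$ differs from the paper's $\Delta_1^\ell$ bound only by a factor of $\frac{1}{1-\beta}$. The paper's secant-line method, on the other hand, gives a slightly sharper constant and makes the role of the convexity of $\arctanh$ explicit.
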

\begin{proof} 
Since $\Delta_1^r\subset Q$, we know that its Holmes--Thompson area is finite and we can apply the Fubini--Tonelli theorem by parametrizing
$\Delta_1^r=\{(x,y) \in Q_\st\mid  \gamma\leq y\leq 1,  \ \beta \leq x\leq \beta+\tfrac{1-\beta}{1-\alpha}(1-y)\}$ to write
\begin{align*}
\int_{\Delta^r_1}\frac{2+y}{(1-x^2)(1-y^2)}dxdy&=\int_{\gamma}^1\left(\frac{2+y}{1+y}\cdot \frac{1}{1-y}\left(\int_{\beta}^{\beta+\frac{1-\beta}{1-\alpha}(1-y)}\frac{dx}{1-x^2}\right)\right) dy\\
&<2\int_{\gamma}^1\frac{1}{1-y}\arctanh(x)\Bigg\vert_{\beta}^{\beta+\frac{1-\beta}{1-\alpha}(1-y)}dy\\
&=2\int_{\gamma}^1\frac{1}{1-y}\left(\arctanh\Big(\beta+\frac{1-\beta}{1-\alpha}(1-y)\Big)-\arctanh(\beta)\right)dy
\end{align*}
Set
\[
\delta\coloneqq\displaystyle\frac{1-\beta}{1-\alpha}>0\qquad\text{ and }\qquad F(y)\coloneqq\arctanh\left(\beta+\delta(1-y)\right)-\arctanh(\beta).
\]
Before we continue we need an auxiliary lemma.
\begin{lemma} For all $y\in[\gamma,1]$, we have
\[
F(y)\leq \frac{\arctanh(\gamma)-\arctanh(\beta)}{1-\gamma}(1-y).
\]
\end{lemma}
\begin{proof} We start by showing that $F(y)\geq 0$ for all $y\in[\gamma,1]$. Since $F(1)=0$, it suffices to show that $F$ is decreasing on this interval. Given that
\[
F'(y)=-\frac{\delta}{1-\left(\beta+\delta(1-y)\right)^2},
\]
the function $F$ is decreasing on the interval $[\gamma,1]$, if $0<\beta+\delta(1-y)<1$ for all $y\in[\gamma,1]$. This inequality can be observed geometrically since $\beta+\delta(1-y)$ gives the $x$-coordinate of a point on the line segment connecting $(\beta,1)$ to $(\gamma,\gamma)$ and $0<\beta<\gamma<1$. Now, it suffices to show that the function $F$ is concave up since
\[
x=\frac{\arctanh(\gamma)-\arctanh(\beta)}{1-\gamma}(1-y)
\]
is the equation of the secant between $(\gamma, F(\gamma))$ and $(1, F(1))$. It is straightforward to see that
\[
F''(y)=\frac{2\delta^2\left(\beta+\delta(1-y)\right)}{\left(1-\left(\beta+\delta(1-y)\right)^2\right)^2}\geq 0,
\]
which concludes the proof of the lemma.
\end{proof}
Applying the lemma, we estimate
\begin{align*}
\int_{\gamma}^1\frac{1}{1-y}\left(\arctanh\Big(\beta+\frac{1-\beta}{1-\alpha}(1-y)\Big)-\arctanh(\beta)\right)dy&<(\arctanh(\gamma)-\arctanh(\beta)),
\end{align*}
and observe that
\[
\lim_{\alpha\to 1}\frac{\arctanh(\gamma)-\arctanh(\beta)}{\ln^2(1-\alpha)}=\lim_{\beta\to 1}\frac{\arctanh(\gamma)-\arctanh(\beta)}{\ln^2(1-\beta)}=0,
\]
which completes the proof.
\end{proof}

Now, consider the triangle $\Delta_1^\ell$ with vertices $(\beta,1)$, $(\beta,\gamma)$, and the intersection between the horizontal line $y=\gamma$ and the line between $(\beta,1)$ and $(-1,\alpha)$, see Figure~\ref{fig: Delta left and right}. In particular, $y\geq |x|$ for all $(x,y)\in \Delta^\ell_1$.
Note that the line passing through $(\beta,1)$ and $(-1,\alpha)$ is given by the equation 
\[
x=\beta-\frac{1+\beta}{1-\alpha}(1-y).
\]
\begin{proposition} We have
\[
\lim_{\alpha\to 1}\frac{\int_{\Delta^\ell_1}\frac{2+y}{(1-x^2)(1-y^2)}dxdy}{\ln^2(1-\alpha)}=\lim_{\beta\to 1}\frac{\int_{\Delta^\ell_1}\frac{2+y}{(1-x^2)(1-y^2)}dxdy}{\ln^2(1-\beta)}=0.
\]
\end{proposition}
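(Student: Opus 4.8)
The plan is to follow the template of the preceding argument for $\Delta_1^r$, but the convexity that made that argument work is lost here, and overcoming this is the crux. Parametrizing
\[
\Delta_1^\ell=\Big\{(x,y)\in Q_\st : \gamma\le y\le 1,\ \beta-\tfrac{1+\beta}{1-\alpha}(1-y)\le x\le\beta\Big\}
\]
and applying Fubini--Tonelli (valid since $\Delta_1^\ell\subset Q$ has finite area and the integrand is nonnegative), the inner $x$-integral equals $\arctanh(\beta)-\arctanh\big(\beta-\tfrac{1+\beta}{1-\alpha}(1-y)\big)$. The analogue of the function $F$ used for $\Delta_1^r$ would be $G(y)=\arctanh(\beta)-\arctanh\big(\beta-\tfrac{1+\beta}{1-\alpha}(1-y)\big)$; however the argument $\beta-\tfrac{1+\beta}{1-\alpha}(1-y)$ now changes sign on $[\gamma,1]$, so $G$ is not convex there and the secant estimate $G(y)\le\frac{G(\gamma)}{1-\gamma}(1-y)$ fails. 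This is the main obstacle.

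To get around it, I would first use $\frac{2+y}{1-y^2}=\frac{2+y}{1+y}\cdot\frac{1}{1-y}\le\frac{2}{1-y}$ for $y\ge 0$, and then substitute $w=\tfrac{1+\beta}{1-\alpha}(1-y)$. The decisive feature of this substitution is that the prefactor $\tfrac{1+\beta}{1-\alpha}$ cancels exactly, leaving
\[
\int_{\Delta_1^\ell}\frac{2+y}{(1-x^2)(1-y^2)}\,dxdy\le 2\int_0^{W}\frac{\arctanh(\beta)-\arctanh(\beta-w)}{w}\,dw=:2I,\qquad W=\tfrac{1+\beta}{1-\alpha}(1-\gamma)=\tfrac{1-\beta^2}{2-\alpha-\beta}.
\]
Here the integrand is nonnegative and extends continuously to $w=0$ with value $(1-\beta^2)^{-1}$, and since $1+\beta-W=\tfrac{(1-\alpha)(1+\beta)}{2-\alpha-\beta}>0$ we have $\beta-w>-1$ throughout $[0,W]$, so $I$ is a finite proper integral. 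Note the qualitative difference from $\Delta_1^r$: there the bound grew like $|\ln(1-\alpha)|$, whereas here it will turn out to remain bounded.

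It then remains only to show that $I$ stays bounded in each limit, after which the claim is immediate because $\ln^2(1-\alpha)$ and $\ln^2(1-\beta)$ tend to $+\infty$. For $\alpha\to 1$ with $\beta$ fixed, $W$ increases to $1+\beta$, so $I\le\int_0^{1+\beta}\tfrac{\arctanh(\beta)-\arctanh(\beta-w)}{w}\,dw$, which is finite because the only singularity, at $w=1+\beta$ where $\beta-w\to-1^+$, is an integrable logarithmic one (the integrand is bounded near $w=0$). For $\beta\to 1$ with $\alpha$ fixed, $W=\tfrac{1-\beta^2}{2-\alpha-\beta}\to 0$, so for $\beta$ near $1$ the interval $[\beta-W,\beta]$ lies in $[-\beta,\beta]$; then $\arctanh(\beta)-\arctanh(\beta-w)=\int_{\beta-w}^\beta\tfrac{dt}{1-t^2}\le\tfrac{w}{1-\beta^2}$, whence $I\le\tfrac{W}{1-\beta^2}=\tfrac{1}{2-\alpha-\beta}\to\tfrac{1}{1-\alpha}<\infty$. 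In both regimes $\int_{\Delta_1^\ell}\frac{2+y}{(1-x^2)(1-y^2)}\,dxdy\le 2I$ is bounded, which yields the two stated limits. Together with the estimate for $\Delta_1^r$ this controls $\Delta_1$ in each separate limit, and the triangles $\Delta_2,\Delta_3,\Delta_4$ are treated identically by symmetry.
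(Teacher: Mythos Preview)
Your proof is correct but follows a genuinely different route from the paper's. Both proofs start identically---Fubini--Tonelli and the bound $\tfrac{2+y}{1+y}<2$---reducing the problem to estimating $\int_\gamma^1\tfrac{1}{1-y}\big(\arctanh(\beta)-\arctanh(\beta-\tfrac{1+\beta}{1-\alpha}(1-y))\big)\,dy$. From there the paper proceeds algebraically: it rewrites the $\arctanh$ difference as $\tfrac12\ln\!\big(1+\tfrac{2}{1-\beta}\cdot\tfrac{(1-y)/(1-\alpha)}{1-(1-y)/(1-\alpha)}\big)$, applies $\ln(1+z)\le z$, and integrates explicitly to obtain the single closed-form bound $\tfrac{2}{1-\beta}\big(\ln(2-\alpha-\beta)-\ln(1-\alpha)\big)$, which handles both limits at once. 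You instead substitute $w=\tfrac{1+\beta}{1-\alpha}(1-y)$, which cleanly cancels the prefactor and gives $I=\int_0^W\tfrac{\arctanh(\beta)-\arctanh(\beta-w)}{w}\,dw$ with $W=\tfrac{1-\beta^2}{2-\alpha-\beta}$; you then treat the two limits separately, by monotonicity in $W$ (bounding $I$ by the fixed convergent improper integral over $[0,1+\beta]$ as $\alpha\to1$) and by a mean-value estimate (as $\beta\to1$). Your approach avoids the algebraic identity entirely and actually produces a sharper estimate in the $\alpha\to1$ regime---your bound stays $O(1)$ while the paper's grows like $|\ln(1-\alpha)|$---at the modest cost of requiring two separate arguments rather than one uniform bound.
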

\begin{proof}
Again, since we know that the Holmes--Thompson area of $\Delta^\ell_1$ is finite, we can apply the Fubini--Tonelli theorem parametrizing $\Delta_1^\ell = \{(x,y) \in Q_\st \mid \gamma \leq y \leq 1,  \ \beta-\tfrac{1+\beta}{1-\alpha}(1-y) \leq x \leq \beta\}$ to write
\begin{align*}
\int_{\Delta^\ell_1}\frac{2+y}{(1-x^2)(1-y^2)}dxdy
&<2\int_{\gamma}^1\frac{1}{1-y}\arctanh(x)\Bigg\vert_{\beta-\frac{1+\beta}{1-\alpha}(1-y)}^{\beta}dy\\
&=2\int_{\gamma}^1\frac{1}{1-y}\left(\arctanh\left(\beta\right)-\arctanh\Big(\beta-\frac{1+\beta}{1-\alpha}(1-y)\Big)\right)dy.
\end{align*}
Observe that
\begin{align*}
2\Bigg(\arctanh&(\beta)-\arctanh\Big(\beta-\frac{1+\beta}{1-\alpha}(1-y)\Big)\Bigg)
=\ln\left(\frac{1+\beta}{1-\beta}\right)-\ln\left(\frac{1+\beta-\frac{1+\beta}{1-\alpha}(1-y)}{1-\beta+\frac{1+\beta}{1-\alpha}(1-y)}\right)\\
&=\ln\left(\frac{1+\frac{1+\beta}{1-\beta}\frac{1-y}{1-\alpha}}{1-\frac{1-y}{1-\alpha}}\right)=\ln\left(1+\frac{\frac{1-y}{1-\alpha}}{1-\frac{1-y}{1-\alpha}}\left(1+\frac{1+\beta}{1-\beta}\right)\right)=\ln\left(1+\frac{2}{1-\beta}\frac{\frac{1-y}{1-\alpha}}{1-\frac{1-y}{1-\alpha}}\right).
\end{align*}
Note that $1-\beta>0$ and $0<(1-y)/(1-\alpha)<1$, since $0<\alpha<y<1$. Thus we can use the inequality
\[
\ln(1+z)\leq z\text{ for all }z\geq 0
\]
to get
\begin{align*}
\int_{\Delta^\ell_1}\frac{2+y}{(1-x^2)(1-y^2)}dxdy
&<\int_{\gamma}^1\frac{1}{1-y}\frac{2}{1-\beta}\frac{\frac{1-y}{1-\alpha}}{1-\frac{1-y}{1-\alpha}}dy\\
&=\frac{2}{1-\beta}\int_\gamma^1\frac{1}{y-\alpha}dy=\frac{2}{1-\beta}\big(\ln(1-\alpha)-\ln(\gamma-\alpha)\big).
\end{align*}
Since
\[
\gamma-\alpha=\frac{1-\alpha\beta}{2-\alpha-\beta}-\alpha=\frac{1-\alpha\beta-2\alpha+\alpha^2+\alpha\beta}{2-\alpha-\beta}=\frac{(1-\alpha)^2}{2-\alpha-\beta},
\]
we conclude
\[
0<\int_{\Delta^\ell_1}\frac{2+y}{(1-x^2)(1-y^2)}dxdy<\frac{2}{1-\beta}\big(\ln(2-\alpha-\beta)-\ln(1-\alpha)\big).
\]
It is straightforward to check that
\[
\lim_{\alpha\to 1}\frac{\frac{2}{1-\beta}\big(\ln(2-\alpha-\beta)-\ln(1-\alpha)\big)}{\ln^2(1-\alpha)}=\lim_{\beta\to 1}\frac{\frac{2}{1-\beta}\big(\ln(2-\alpha-\beta)-\ln(1-\alpha)\big)}{\ln^2(1-\beta)}=0,
\]
which concludes the proof.
\end{proof}
Let $\vol(\Delta_i)$ for $i=1,2,3,4$ denote the Holmes--Thompson area of $\Delta_i$ as inscribed in the standard quadrilateral. By symmetry (reflection about the line $y=x$), we immediately deduce that if $\Delta_2$ denotes the connected component of $Q \setminus Q'$ with vertex $(1,\alpha)$, then
\[
\lim_{\alpha\to 1}\frac{\vol(\Delta_2)}{\ln^2(1-\alpha)}=\lim_{\beta\to 1}\frac{\vol(\Delta_2)}{\ln^2(1-\beta)}=0.
\]
Finally, if $R_x$ denotes the Euclidean reflection about the $x$-axis and $R_y$ denotes the Euclidean reflection about the $y$-axis, up to relabeling $\Delta_3$ and $\Delta_4$, since $\alpha,\beta\geq 0$, we have 
\[
\Delta_3\subseteq R_x(\Delta_1)\qquad \text{ and }\qquad \Delta_4\subseteq R_y(\Delta_2).
\]
Then, by symmetry
\[
\lim_{\alpha\to 1}\frac{\vol(R_x(\Delta_1))}{\ln^2(1-\alpha)}=\lim_{\beta\to 1}\frac{\vol(R_x(\Delta_1))}{\ln^2(1-\beta)}=0
\]
and
\[
\lim_{\alpha\to 1}\frac{\vol(R_y(\Delta_2))}{\ln^2(1-\alpha)}=\lim_{\beta\to 1}\frac{\vol(R_y(\Delta_2))}{\ln^2(1-\beta)}=0.
\]
Combining everything, this concludes the proof of Proposition \ref{lem:Deltai}. 

\section{Convex projective structures on a thrice-punctured sphere}
\label{sec:ProjStrSphere}

We apply the results of Section \ref{sec:HTQuadrupleEx2} to convex real projective structures on a thrice-punctured sphere $S=S_{0,3}$. 
The unique complete finite area hyperbolic structure on $S$ gives a conjugacy class of discrete and faithful representations $\rho_0' \colon \pi_1(S_{0,3}) \to \PGL_2(\mathbb R)$. 
Post-composing by the irreducible representation of $\PGL_2(\mathbb R)$ into $\PGL_3(\mathbb R)$, we get a conjugacy class $[\rho_0]$ of representations for which the holonomy around each puncture is conjugated to a maximal Jordan block.
Marquis \cite{marquis2012surface} showed that continuous deformations of $[\rho_0]$ in the $\PGL_3(\mathbb R)$-character variety such that the holonomy around each puncture remains conjugated to a maximal Jordan block correspond to convex real projective structures on $S$ with finite {\em Busemann} area. 
Given that the Busemann and Holmes--Thompson areas are bilipschitz equivalent,  see Remark~\ref{rem: volume bilipschitz}, the Holmes--Thompson area of these convex real projective structures is also finite. 
Moreover, these representations are positive in the sense of Fock and Goncharov \cite{FockGoncharov_ModuliSpacesConvexProjectiveStructuresSurfaces,  FockGoncharov_ModuliSpacesLocalSystemsHigherTeichmuellerTheory}. 

The space of finite area marked convex real projective structures on $S=S_{0,3}$ is homeomorphic to $\mathbb R^2$ and we briefly recall its Fock--Goncharov parametrization. 
Let $p_1,p_2,p_3$ denote the three punctures of $S$ and let $\mathcal T_{\textnormal{bal}}$ denote the balanced ideal triangulation of $S$ consisting of edges oriented from $p_i$ to $p_{i+1}$, with $i\in\mathbb Z/3\mathbb Z$. Given a finite area convex real projective structure $\rho$ on $S$, let $\wt {\mathcal T}_{\textnormal{bal}}$ denote the lift of the balanced ideal triangulation to the properly convex domain $\Omega_\rho \subset \mathbb{RP}^2$ and choose an ideal triangle $T_0\in \wt {\mathcal T}_{\textnormal{bal}}$ with vertices $\wt p_1,\wt p_2,\wt p_3$ appearing in this cyclic order around $\partial\Omega_\rho$. By considering the three ideal triangles in $\wt {\mathcal T}_{\textnormal{bal}}$ adjacent to $T_0$, we obtain points $\wt q_1,\wt q_2,\wt q_3\in\partial\Omega_\rho$ so that $\wt p_1,\wt q_3,\wt p_2,\wt q_1,\wt p_3,\wt q_2$ appear in this cyclic order around $\partial \Omega_\rho$. Note that there is a unique line tangent to $\partial\Omega_\rho$ at each of these points, and then denote by $(\xi_1,\xi_2,\dots, \xi_6)$ the positive 6-tuple of flags corresponding to $(\wt p_1,\wt q_3,\dots, \wt q_2)$. This positive 6-tuple of flags determines the following Fock--Goncharov parameters
\begin{gather*}
r_1=D_1(\xi_1,\xi_2,\xi_3,\xi_5),\qquad r_2=D_2(\xi_1,\xi_2,\xi_3,\xi_5),\\
b_1=D_1(\xi_3,\xi_4,\xi_5,\xi_1),\qquad b_2=D_2(\xi_3,\xi_4,\xi_5,\xi_1),\\
g_1=D_1(\xi_5,\xi_6,\xi_1,\xi_3),\qquad g_2=D_2(\xi_5,\xi_6,\xi_1,\xi_3),\\
t_1=T(\xi_1,\xi_3,\xi_5), \text{ and }
t_2=T(\xi_1,\xi_2,\xi_3)=T(\xi_3,\xi_4,\xi_5)=T(\xi_1,\xi_5,\xi_6).
\end{gather*}
Since we assume that $\rho$ has finite area, the Bonahon--Dreyer length equalities \cite[Proposition 13]{BonahonDreyer_ParametrizingHitchinComponents} imply that the parameters must satisfy the equalities
\begin{gather*}
r_1g_2=b_1r_2=g_1b_2=1\quad \text{ and }\quad r_2g_1t_1t_2=b_1r_2t_1t_2=g_1b_2t_1t_2=1.
\end{gather*}
Then, we can see that the choice of a triple ratio and a double ratio uniquely determines the remaining parameters. For the sake of concreteness, set $t=t_1$ and $d=r_1$. It follows that, if $\mathcal {CP}(S_{0,3})$ denotes the space of marked finite area convex real projective structures on $S_{0,3}$, we have a bijection
\begin{align*}
\Phi_{\mathcal T_{\textnormal{bal}}}\colon \mathcal{CP}(S_{0,3})&\to \mathbb R^2\\ 
\quad [\rho]&\to ({\bf d},{\bf t}) \coloneqq (\ln d,\ln t).
\end{align*}
We denote the inverse of $\Phi_\mathcal{T_{\textnormal{bal}}}$ by $\Psi_\mathcal{T_{\textnormal{bal}}}$. 

Observe that a quadrilateral obtained by two adjacent triangles in $\wt {\mathcal T}_{\textnormal{bal}}$ determines a positive quadruple of flags ${\bf F}=(E,F,G,H)$ which satisfies 
\begin{align}\label{eq: coord S03}
T(E,F,G)\cdot T(E,G,H)=1\quad\text{ and }\quad
D_1(E,F,G,H)\cdot D_2(E,F,G,H)=1.
\end{align}
In other words, the quadruple ${\bf F}$ satisfies the constraints considered in Section~\ref{sec:HTQuadrupleEx2}. 
Note that the equations~(\ref{eq: coord S03}) still hold after a diagonal flip or a change in orientations of the edges of $\mathcal T_{\textnormal{bal}}$ by Remark~\ref{rmk: combinatorics triple and double}. 

Now, let $\vol(\Psi_{\mathcal T_{\textnormal{bal}}}({\bf d},{\bf t}))$ denote the Holmes--Thompson area of the convex real projective structure $\rho$ parametrized by $({\bf d},{\bf t})$. Observe that $P_{\inn}(\bf F)$ is an ideal quadrilateral inscribed in $\Omega_\rho$ and that $\Omega_\rho$ is contained in the quadrilateral $P_{\out}(\bf F)$. Then, Remark~\ref{rem: volume decreasing} immediately implies the following.

\begin{corollary}
Let $\mathcal{T}_{\textnormal{bal}}$ denote the balanced ideal triangulation of the thrice-punctured sphere $S_{0,3}$. For $({\bf d},{\bf t})\in \mathbb R^2$ consider the corresponding finite area convex projective structure $\Psi_{\mathcal T_{\textnormal{bal}}}({\bf d},{\bf t})$. 
Let $\bf F$ be a positive quadruple of flags in $\mathbb R^3$ corresponding to two adjacent triangles in $\widetilde{\mathcal T}_{\textnormal{bal}}$ of the properly convex domain associated to $\Psi_{\mathcal T_{\textnormal{bal}}}({\bf d},{\bf t})$. 
Then,
\[\vol(\Psi_{\mathcal{T}_{\textnormal{bal}}}({\bf d},{\bf t})) \geq \vol(\bf F)=\vol_{P_\out(\bf F)}(P_{\inn}({\bf F})).\]
\end{corollary}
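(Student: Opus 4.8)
The plan is to combine the two inclusions recorded just above the statement with the monotonicity of Holmes--Thompson area from Remark~\ref{rem: volume decreasing}; everything else is bookkeeping. The one point that genuinely needs to be pinned down is the identification of the total area of $\rho$ with the Hilbert area of a single ideal quadrilateral.

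First I would observe that the balanced ideal triangulation $\mathcal{T}_{\textnormal{bal}}$ of $S_{0,3}$ has exactly $-2\chi(S_{0,3}) = 2$ ideal triangles (two triangles glued along three edges), so a fundamental domain for the action of $\pi_1(S_{0,3})$ on $\Omega_\rho$ is obtained from two adjacent triangles of $\widetilde{\mathcal{T}}_{\textnormal{bal}}$ sharing a lifted edge. These project to the two distinct triangles of the surface, and their union is precisely the ideal quadrilateral $P_{\inn}(\mathbf{F})$ determined by the quadruple $\mathbf{F}$ of the corollary. Since $\vol(\Psi_{\mathcal{T}_{\textnormal{bal}}}(\mathbf{d},\mathbf{t}))$ is by definition the Holmes--Thompson area of $S_{0,3}$ for its induced Hilbert--Finsler metric, i.e.\ the area of such a fundamental domain measured in $(\Omega_\rho, d_{\Omega_\rho})$, this yields the identity
\[
\vol(\Psi_{\mathcal{T}_{\textnormal{bal}}}(\mathbf{d},\mathbf{t})) = \vol_{\Omega_\rho}(P_{\inn}(\mathbf{F})).
\]

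Next I would record the inclusions $P_{\inn}(\mathbf{F}) \subseteq \Omega_\rho \subseteq P_{\out}(\mathbf{F})$ already noted in the text: the first holds because $P_{\inn}(\mathbf{F})$ is the convex hull of the four points $\xi_1,\dots,\xi_4 \in \partial\Omega_\rho$ and $\Omega_\rho$ is convex, while the second holds because $P_{\out}(\mathbf{F})$ is the intersection of the half-planes bounded by the lines tangent to $\partial\Omega_\rho$ at $\xi_1,\dots,\xi_4$, each of which contains $\Omega_\rho$ by convexity of $\Omega_\rho$.

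Finally I would apply Remark~\ref{rem: volume decreasing} with $\Omega = \Omega_\rho$, $\Omega' = P_{\out}(\mathbf{F})$ and $X = P_{\inn}(\mathbf{F}) \subseteq \Omega$, obtaining $\vol_{\Omega_\rho}(P_{\inn}(\mathbf{F})) \geq \vol_{P_{\out}(\mathbf{F})}(P_{\inn}(\mathbf{F}))$, where the right-hand side equals $\vol(\mathbf{F})$ by Definition~\ref{def: HT area flags}. Chaining this with the identity of the first step gives $\vol(\Psi_{\mathcal{T}_{\textnormal{bal}}}(\mathbf{d},\mathbf{t})) \geq \vol(\mathbf{F})$, as claimed. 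There is no serious obstacle: the whole argument is an immediate consequence of the comparison principle, and the only step requiring care is the first one, namely checking that the two-triangle fundamental domain is exactly $P_{\inn}(\mathbf{F})$ so that the total area of the surface coincides with the Hilbert area of this single ideal quadrilateral in $\Omega_\rho$.
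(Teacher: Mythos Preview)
Your proposal is correct and follows the same route as the paper's argument. The paper's proof is the single sentence preceding the corollary (the inclusions $P_{\inn}(\mathbf F)\subseteq\Omega_\rho\subseteq P_{\out}(\mathbf F)$ together with Remark~\ref{rem: volume decreasing}); you have simply made explicit the one point the paper leaves tacit, namely that the two adjacent triangles of $\widetilde{\mathcal T}_{\textnormal{bal}}$ form a fundamental domain for the $\pi_1(S_{0,3})$-action, so that $\vol(\Psi_{\mathcal T_{\textnormal{bal}}}(\mathbf d,\mathbf t))=\vol_{\Omega_\rho}(P_{\inn}(\mathbf F))$.
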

Then, using the results of Section \ref{sec:HTQuadrupleEx2},  we obtain the following corollary.

\begin{corollary}[Theorem~\ref{thm: intro: S03}]
There exists a constant $C>0$ such that for any $({\bf d},{\bf t})\in\mathbb R^2$, it holds that
\[\liminf_{\|({\bf d},{\bf t})\| \to \infty} \frac{\vol(\Psi_{\mathcal{T}_{\textnormal{bal}}}({\bf d},{\bf t}))}{\bf d^2+\bf t^2}\geq C.\]
\end{corollary}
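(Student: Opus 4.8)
The plan is to reduce the global lower bound to the single-quadrilateral estimate of Corollary~\ref{corollary: estimate} (equivalently Theorem~\ref{thm: intro estimate}) by means of the monotonicity of Holmes--Thompson area under inclusion (Remark~\ref{rem: volume decreasing}), which is already packaged in the preceding corollary.

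First I would pin down the quadruple of flags attached to two adjacent triangles of $\wt{\mathcal T}_{\textnormal{bal}}$. With the labeling above, the central triangle is $(\xi_1,\xi_3,\xi_5)$ and the triangle adjacent across the edge $\xi_1\xi_3$ is $(\xi_1,\xi_2,\xi_3)$, so together they produce the quadruple ${\bf F}=(\xi_1,\xi_2,\xi_3,\xi_5)$. Its double ratios are $D_1({\bf F})=r_1$ and $D_2({\bf F})=r_2$, and its triple ratios are $T(\xi_1,\xi_2,\xi_3)=t_2$ and $T(\xi_1,\xi_3,\xi_5)=t_1$. The finite area relations recalled above force $r_1r_2=1$ and $t_1t_2=1$, so ${\bf F}$ is exactly a member of the family $\mathbf F(d,t)$ of Section~\ref{sec:HTQuadrupleEx2}, with double ratio $d=r_1$ and triple ratio $t_2=1/t_1$.

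Next, the preceding corollary gives, for every $({\bf d},{\bf t})\in\mathbb R^2$,
\[
\vol(\Psi_{\mathcal{T}_{\textnormal{bal}}}({\bf d},{\bf t}))\ \geq\ \vol({\bf F}).
\]
Since the parametrization sets ${\bf d}=\ln(r_1)$ and ${\bf t}=\ln(t_1)$, and since $\ln^2(t_2)=\ln^2(1/t_1)=\ln^2(t_1)={\bf t}^2$ and $\ln^2(r_1)={\bf d}^2$, the normalizing scale attached to ${\bf F}$ in Corollary~\ref{corollary: estimate} is precisely ${\bf d}^2+{\bf t}^2$, and $\|(\ln(r_1),\ln(t_2))\|=\|({\bf d},-{\bf t})\|=\|({\bf d},{\bf t})\|$. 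Dividing the displayed inequality by ${\bf d}^2+{\bf t}^2$ then gives
\[
\frac{\vol(\Psi_{\mathcal{T}_{\textnormal{bal}}}({\bf d},{\bf t}))}{{\bf d}^2+{\bf t}^2}\ \geq\ \frac{\vol({\bf F})}{\ln^2(r_1)+\ln^2(t_2)}.
\]

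Finally I would take $\liminf$ as $\|({\bf d},{\bf t})\|\to\infty$: by Corollary~\ref{corollary: estimate} the right-hand side tends to the constant $C\in(0,\infty)$, so its $\liminf$ equals $C$, and since the inequality holds pointwise the $\liminf$ of the left-hand side is at least $C$, as claimed. I expect the only non-formal point to be the bookkeeping of the first paragraph---checking that the quadruple coming from two adjacent lifted triangles lands in the family $\mathbf F(d,t)$ with $d=r_1$ and $t=t_1^{\pm 1}$, so that Corollary~\ref{corollary: estimate} applies and its denominator matches ${\bf d}^2+{\bf t}^2$. The symmetries $d\mapsto 1/d$, $t\mapsto 1/t$ of the constraints (Remark~\ref{rmk: combinatorics triple and double}) together with the $\alpha\mapsto-\alpha$ symmetry of $A_{Q_\st}$ are what make the choice of diagonal and orientation harmless here.
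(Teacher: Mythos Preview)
Your proposal is correct and follows essentially the same route as the paper's proof: invoke the preceding corollary to bound $\vol(\Psi_{\mathcal T_{\textnormal{bal}}}(\mathbf d,\mathbf t))$ below by $\vol(\mathbf F)$ for a quadruple satisfying the constraints of Section~\ref{sec:HTQuadrupleEx2}, then apply Corollary~\ref{corollary: estimate}. Your first paragraph makes the bookkeeping more explicit than the paper does---identifying $\mathbf F=(\xi_1,\xi_2,\xi_3,\xi_5)$ and checking that its parameters land in the family $\mathbf F(d,t)$ with $\ln^2(d)+\ln^2(t)=\mathbf d^2+\mathbf t^2$---which is a welcome clarification rather than a different argument.
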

\begin{proof}
By the above corollary we know that
\[\vol(\Psi_{\mathcal{T}_{\textnormal{bal}}}({\bf d},{\bf t})) \geq \vol({\bf F})\]
for a positive quadruple of flags ${\bf F}$ with Fock--Goncharov parameters $t,t',d,d'>0$ satisfying $tt'=1$ and $dd'=1$. 
Then, in Section~\ref{sec:HTQuadrupleEx2}, we showed that $\vol ({\bf F}(d,t))$ is comparable to $\max\{\ln^2(d),\ln^2(t)\}$,  see Corollary~\ref{corollary: estimate}, which finishes the proof.
\end{proof}

\bibliographystyle{alpha}
\bibliography{HTquad.bib}
\end{document}